\definecolor{darkgreen}{cmyk}{1,0,1,.2}
\definecolor{m}{rgb}{1,0.1,1}
\definecolor{green}{cmyk}{1,0,1,0}
\definecolor{test}{rgb}{1,0,0}   
\definecolor{cmyk}{cmyk}{0,1,1,0}
\newtheorem{Equation}{}[section]
\newtheorem{theorem}[Equation]{Theorem}
\newtheorem{proposition}[Equation]{Proposition}
\newtheorem{lemma}[Equation]{Lemma}
\newtheorem{corollary}[Equation]{Corollary}
\newtheorem{definition}[Equation]{Definition}
\newtheorem{remark}[Equation]{Remark}
\def\pa{\partial}
\def\Hom{\operatorname{Hom}}
\def\max{\operatorname{max}}
\def\JLO{\operatorname{JLO}}
\def\SF{\operatorname{SF}}
\def\Ch{\operatorname{Ch}}
\def\End{\operatorname{End}}
\def\Ker{\operatorname{Ker}}
\def\Ind{\operatorname{Ind}}
\def\ch{\operatorname{ch}}
\def\B{\mathbb B}
\def\A{\mathbb A}
\def\C{\mathbb C}
\def\D{\mathbb D}
\def\R{\mathbb R}
\def\Z{\mathbb Z}
\def\N{\mathbb N}
\def\ep{\epsilon}
\def\maA{{\mathcal A}}
\def\maC{{\mathcal C}}
\def\maB{{\mathcal B}}
\def\maE{{\mathcal E}}
\def\maJ{{\mathcal J}}
\def\maL{{\mathcal L}}
\definecolor{darkgreen}{cmyk}{1,0,1,.2}
\definecolor{m}{rgb}{1,0.1,1}
\begin{document}



\title[JLO and spectral flow for families]
{Higher spectral flow and \\ an entire bivariant JLO cocycle
}


\author[M-T. Benameur]{Moulay-Tahar Benameur}
\address{UMR 7122, LMAM, \\Universit\'e Paul Verlaine-Metz,
FRANCE}
\email{benameur@univ-metz.fr}
\author[A. L. Carey \today]{Alan L.  Carey}
\address{Mathematical Sciences Institute\\
Australian National University\\
Canberra, ACT. 0200, AUSTRALIA} 
\email{acarey@maths.anu.edu.au}
\footnote{Both authors acknowledge the financial support of the Australian Research Council and of the PICS,
Progr\`es en Analyse G\'eom\'etrique et Applications of the CNRS.}

\begin{abstract} For a single Dirac operator on a closed manifold the cocycle introduced by Jaffe-Lesniewski-Osterwalder \cite{JLO} (abbreviated here to JLO), is a representative
of Connes' Chern character map from the K-theory of the  algebra of smooth functions on the manifold to its entire cyclic cohomology.
Given a smooth fibration of closed manifolds and a family of generalized Dirac operators along the fibers, we define in this paper an associated bivariant JLO cocycle. We then prove that, for any $\ell \geq 0$, our bivariant JLO cocycle is entire  when we endow smoooth functions on the total manifold with the  $C^{\ell+1}$  topology and functions on the base manifold with the $C^\ell$  topology.  As a by-product of our theorem, we deduce that the bivariant JLO cocycle is entire for the Fr\'echet smooth topologies. We then prove that our JLO bivariant cocycle computes the Chern character of the Dai-Zhang higher spectral flow.
\end{abstract}
\maketitle
\tableofcontents

\section*{Introduction} 

%
%
%
%

Our objective in this paper is to give a bivariant entire Chern character sufficiently general  to encompass the  index theorem for families of
generalized Dirac operators.
In other words we make explicit the long held view that 
the Bismut formalism may be incorporated into noncommutative geometry.
 From the authors' point of view this question arose from a discussion at
Oberwolfach (we thank Masoud Khalkhali and Alain Connes for
comments). A number of  results in different algebraic and/or geometric situations have
been obtained previously for instance in \cite{LP, Nistor, Wu97, Gorokhovsky, Perrot}. 
(There is also the related question of the bivariant version of the
Connes-Moscovici residue cocycle but we defer that to another place.)

The first issue  is to choose a bivariant framework for
this problem.   In Meyer's thesis
\cite{MeyerThesis} we have found an appropriate formalism.
Using Meyer's ideas we define a  bivariant JLO (Jaffe-Lesniewski-Osterwalder \cite{JLO}) cocycle that encompasses the local  index theorem for families \cite {BGV, Bismut}. {Recall that
the ordinary JLO cocycle is a representative of Connes' Chern character map from the K-theory of an algebra to its entire cyclic cohomology \cite{ConnesBook}. As explained in \cite{BF} it may be used to deduce the Atiyah-Singer local index theorem. Our bivariant JLO cocycle generalizes this result to the
situation of families of generalized Dirac operators associated to a fibration. 

There are several intermediate results that are required to reach our objective.}
Our first main result is to prove that our bivariant JLO cocycle is entire in the sense of 
our adaptation of the formalism of \cite{MeyerThesis}, when one considers the $C^{\ell}$ topology on the algebra of smooth functions on the base, and the $C^{\ell+1}$ topology on the algebra of smooth functions on the total space of the fibration. We thank Ralf Meyer for his helpful comments on our approach to this result.

  To describe our further results we need some notation.
 Throughout we shall consider a locally trivial fibration $F\to M \stackrel{\pi}{\rightarrow} B$ of closed manifolds endowed with smooth metrics. Following \cite{BGV}, we 
fix an hermitian vector bundle $E\to M$ whose fibers are modules over the Clifford algebra of the fiberwise tangent bundle $T_v M = \Ker( \pi_*)$.  
While we formulate our initial results in a general way our main interest lies
in the case of odd dimensional fibers for $E$ as this is not as well understood
as is the even dimensional situation. 
We follow \cite{Bismut, BGV} and introduce on $E$ a quasiconnection
$\nabla$. We choose a family of generalized Dirac operators parametrised smoothly by $B$ denoted  $D$, and different superconnections as in \cite{Bismut}. They will be for us given by
$$
\A_\sigma (A) := \B_\sigma + A,
$$
where $A$ is a zero-th order fiberwise $\sigma$-pseudodifferential operator with coefficients in differential forms of positive degree (in applications $\geq 2$) on $B$. 
Here, the superconnection $\B_\sigma$ is defined by 
$$
\B_\sigma = \nabla + \sigma D
$$
with $\sigma$ being a Clifford variable as in Quillen's work \cite{Quillen88}.
We focus on  $\A_\sigma (A)$, $\B_\sigma$ or on superconnections  obtained by metric rescalings. In order to simplify the exposition of our results, we restrict ourselves to the case $\A(0) = \B_\sigma$ and  briefly explain later how the proofs extend to the general case. 

Our main application is to connect
our bivariant Chern character with the Dai-Zhang higher spectral flow.
The method we use draws on some ideas in
\cite{GetzlerOdd}, \cite{CPII} and \cite{Quillen88} on superconnections and the JLO cocycle.
We show using \cite{DaiZhang} that the bivariant JLO  Chern character in the case of the Bismut superconnection
gives the Bismut local formula that Dai-Zhang observe computes higher spectral flow. (The use of the Bismut superconnection avoids the complications that enter into our bivariant formula when $\nabla^2\neq 0$.) 

Thus our approach
explains the Dai-Zhang results in terms of bivariant cyclic cohomology.  In the single operator case these results are known from Connes \cite{ConnesJLO} and Getzler \cite{GetzlerOdd}. We choose here  to extend Getzler's method to deal with families. In \cite{Perrot}, similar issues are discussed for interesting noncommutative bivariant situations, but only encompassing in the commutative case trivial fibrations
and flat quasiconnections $\nabla$. Our motivation for giving a detailed treatment of the general commutative case has to do with
some applications in noncommutative settings, see for instance \cite{BenameurGorokhovsky},
\cite{BHI}.
We are aware of a need to use our point of view and results
in current work of  colleagues 
A. Gorokhovsky, J.-M. Lescure and B.-L. Wang. Thus we have written the exposition so that it
will adapt immediately to foliations and to spectral flow for 
twisted families. 

The  conceptual framework for our results is captured by
 the following commutative square.

\vspace{0.1in}
\begin{picture}(415,90) 

\put(120,75){$K^1(M)$}
\put(125,65){ $\vector(0,-1){35}$}
\put(110,48){$\Ch $}

\put(185,85){$\SF(D, \cdot)$}
\put(165,79){$\vector(1,0){65}$}

\put(185,25){$\JLO(D)$}
\put(170,19){$\vector(1,0){60}$}

\put(245,75){$K^0(B)$}
\put(255,65){ $\vector(0,-1){35}$}
\put(270,48){$\ch$}

\put(95,15){$HE_1 (C^\infty(M))$}

\put(235,15){$H^{even} (B, \C)$}
\end{picture} 

We digress to explain the notation in this diagram.
The left vertical arrow represents the entire Chern character, while the right vertical arrow represents the usual Chern character with appropriate normalizations.
Our family of generalized Dirac operators
parametrised by $B$ defines an element $[D]$ of
$KK^1(M,B)$ and  the top horizontal arrow, which is the higher spectral flow map defined in \cite{DaiZhang} when the $K^1$ class of $D$ is trivial, is still well defined in general as the Kasparov product $\cap [D]$ by $[D]$.
Finally $\JLO(D)$ is our entire bivariant Chern character
which takes  values in even de Rham cohomology of $B$. 
 
\subsection*{Statement of the main results}
The bivariant JLO cochain is defined by the sequence $(\psi_n)_n$ 
of functionals which, for $(f_0, \cdots, f_n) \in C^\infty(M)^{n+1}$, are given by the formula
$$
 \psi_n (f_0, \cdots, f_n) := \left< \left< f_0, [\B_\sigma , f_1],  \cdots ,[\B_\sigma , f_n] \right> \right>_{\B_\sigma}\in \Omega^*(B),
$$
where the multilinear functional on the right hand side
is a generalized JLO functional whose precise form is explained at the beginning of Section \ref{Multi}. 
Our main results are then as follows.

\medskip

\noindent{\bf Theorem \ref{JLO}}. {\em We assume that the fibers of our fibration are odd dimensional manifolds.
For any $\ell\geq 0$, $\psi=(\psi_{2n+1})_{n\geq 0}$ is an $\ell$-entire bivariant cocycle  in the sense made precise  in Definition \ref{entiremorph}. 
This means that
$\psi$ is a bounded morphism from the entire bornological completion of the universal differential algebra of $C^{\infty}(M)$ to  the algebra of smooth differential forms on $B$
endowed with the natural bornology.}
\medskip

\noindent{\bf Theorem \ref{JLO=SF}}. {\em Assume that the index class of $D$ in $K^1(B)$ is trivial so that the higher spectral  flow $\SF(D,U)$ is well defined, for any $U\in GL_N(C^\infty (M))$. Then the following relation holds in the even de Rham cohomology of the base manifold $B$:
$$
\frac{1}{{\sqrt\pi}}<\JLO (D) , \ch(U) >  = \ch ( \SF (D,U)).
$$\em}

It is possible to deduce from our computations a more precise statement on forms rather than classes. 
 We see also as a corollary of our arguments, and of the main result of \cite{DaiZhang}, that $\JLO(D)$ coincides with the topological map $H^{odd}(M) \to H^{even} (B)$ given by
$$
\omega \longmapsto \int_{M/B} \omega\wedge {\hat A} (TM|B).
$$

To understand the structure of our exposition we now expand on the List of Contents. Section 1 gives the differential geometric framework: families of generalized Dirac operators, connections and
bivariant functionals. In Section 2 we introduce our bivariant JLO multilinear functionals and discuss the identities they satisfy
essentially following \cite{GetzlerSzenes} but adapted to families.
Our first objective, to understand the entire property for bivariant JLO, begins in Section 3. We summarise Meyer's point of view at the beginning of this Section for the reader's convenience and then state our main theorem.
Section 4 contains the proof: the argument is a series of estimates that establish that our JLO is entire in the sense of Meyer (and hence entire).
In Section 5 we establish the commutativity of the diagram above.

{\em Acknowledgements.} The authors are very grateful for a careful reading by a referee who highlighted some confusions and gaps in the original  exposition. We have also benefitted from discussions
with our colleagues, A. Connes, J. Cuntz, A. Gorokhovsky, J. Heitsch, M. Khalkahli, R. Meyer, D. Perrot, M. Puschnigg, A. Rennie, G. Yu,  to whom we express our appreciation. This work was progressed while the first author was visiting the MSI in the Australian National University, the second author was visiting the Laboratoire de Math\'ematiques et Applications in the Universit\'e Paul Verlaine-Metz, and 
both authors were visiting the  Mathematisches Forschungsinstitut Oberwolfach and the Hausdorff Institute for Mathematics.  Both authors are most grateful for the 
warm hospitality and generous support of their hosts.

\section{Preliminary results}

\subsection{Connections}\label{Prelim}

We begin by introducing some further notation and a general framework for our discussion.
As above we denote by $T_vM$ the fiberwise tangent bundle $T_vM := \Ker (\pi_*) \subset TM$. Then we are assuming there is a Clifford homomorphism of algebra bundles
$$
c: Cl(T_vM\otimes \C) \longrightarrow \End (E) \text{ with } c(\xi)^2 = |\xi|^2 \text{ for } \xi\in T_vM,
$$
where $Cl(T_vM\otimes \C)$ denotes the Clifford algebra bundle associated with the hermitian bundle $T_vM\otimes \C$, and $\End (E)$ is the algebra bundle of endomorphisms of $E$. 
We assume as usual that $E$ is endowed with a Clifford connection $\nabla^E$ and consider the Dirac operator $D$ associated with this connection. Then $D$ is a fiberwise first order differential operator acting on smooth sections of $E$ and can be regarded as a family of elliptic operators along the fibers smoothly parametrized by the elements of the base manifold $B$, i.e. $D = (D_b)_{b\in B}$ where  $D_b : C^\infty (M_b, E|_{M_b}) \to C^\infty (M_b, E|_{M_b})$ is an essentially self-adjoint generalized Dirac operator. Notice that we are using self-adjoint operators while the authors of \cite{BGV} use skew-adjoint operators.

Using the metric, we fix the horizontal distribution $H = (T_vM)^\perp$ so that {{
$$
TM = H \oplus T_vM \text{ and for any } b\in B, \pi_{*,m} : H_m \rightarrow T_{\pi(m)} B \text{ is a linear isomorphism}.
$$ }}
The dual vector bundle $\pi^*T^*B$ can be identified with the subbundle of $T^*M$ consisting of forms that vanish on vertical vectors. Using the splitting given by $H$, we deduce the existence of a restriction projection 
$\varrho: T^*M \to \pi^*T^*B$. This projection extends to exterior powers and we obtain, in  the obvious notation,
$$
\varrho: C^\infty (M, E\otimes \Lambda T^*M) \rightarrow C^\infty (M, E\otimes \Lambda \pi^*T^*B).
$$
Notice that $C^\infty (M, E\otimes \Lambda \pi^*T^*B)$ is a module  over the algebra of differential forms on the base manifold $B$. More precisely, this module structure is obtained using  the pull-back map associated with the projection $\pi$.  So if we denote by $\xi\omega $ the action of a differential form $\omega$ on $B$, on a smooth section $\xi$ of $E\otimes \Lambda \pi^*T^*B$, then 
$$
(\xi \omega) (m) := \xi(m) \wedge \omega (\pi(m)), \text{ or } \xi \omega := \xi \wedge \pi^*\omega.
$$
Here $\wedge$ is denoting the usual action of differential forms on the exterior algebra.

For any $m\in \Z$, we  denote by $\Psi^m(M | B, E)$ the space of ($1$-step polyhomogeneous) classical pseudodifferential operators of order $m$, acting along the fibers of $\pi: M\to B$, see \cite{AtiyahSinger4}. The local coefficients of such operators are thus smooth in the base variables and the space $\Psi^m(M | B; E)$ is a module over the algebra $C^\infty (B)$ of smooth functions on $B$. We shall also need the space of such operators with coefficients in differential forms on the base. More precisely, we set
$$
\psi^m (M|B,E; \Lambda^* B) = \Psi^m(M | B, E) {\widehat\otimes}_{C^\infty(B)} \Omega^* (B).
$$
According to \cite{AtiyahSinger4}, the space $\Psi^m(M | B, E; \Lambda^* B)$ can be endowed with a complete smooth topology as a projective tensor product of such topological spaces.
An element of $\psi^m (M|B,E; 
\Lambda^* B)$ is thus equivariant for the action of the algebra $\Omega^*(B)$ of smooth differential forms on $B$.
As a consequence  we can compose an element $P$ of $\psi^m (M|B,E; \Lambda^k B)$ with an element $Q$ of $\psi^{m'} (M|B,E; \Lambda^h B)$ to get an element $QP$ of $\psi^{m+m'} (M|B,E; \Lambda^{k+h} B)$. 
We shall denote by $\psi^\infty (M|B,E; \Lambda^* B)$ the algebra obtained in this way, i.e.
$$
\psi^\infty (M|B,E; \Lambda^* B) :=  \bigcup_{m\in \Z} \psi^m (M|B,E; \Lambda^* B). 
$$
As we shall see, the order of the pseudodifferential operators will not be involved in the gradings used in the sequel. In particular, for us, $\psi^\infty (M|B,E; \Lambda^* B)$ is $\Z_2$-graded by the parity of the degree of the forms on $B$. We also denote by $\psi^{-\infty} (M|B,E; \Lambda^* B)$ the ideal of fiberwise smoothing operators with coefficients in differential forms on $B$, i.e.
$$
\psi^{-\infty} (M|B,E; \Lambda^* B) := \bigcap_{m\in \Z} \psi^m (M|B,E; \Lambda^* B).
$$
Introduce the fiber product $G=M\times_B M:=\{(m,m')\in M\times M, \pi(m)= \pi(m')\}$, which is a smooth groupoid.
By the fiberwise Schwartz theorem applied to fiberwise smoothing operators, $\psi^{-\infty} (M|B,E; \Lambda^* B)$ can and will be identified with the convolution algebra of smooth sections of the bundle $\Hom(E)\otimes\Lambda^*T^*B$ over $G$ whose fiber is
$$
(\Hom(E)\otimes\Lambda^*T^*B)_{m,m'}:=\Hom(E_{m'}, E_m)\otimes\Lambda^*T_{\pi(m)=\pi(m')}^*B.
$$
\begin{definition}
We define the operator $\nabla$ by
$$
\nabla:=\varrho \circ \nabla^E : C^\infty (M, E\otimes \Lambda \pi^*T^*B) \rightarrow C^\infty (M, E\otimes \Lambda T^*M) \rightarrow C^\infty (M, E\otimes \Lambda \pi^*T^*B),
$$
Then $\nabla$  is called a quasi-connection. 
\end{definition}

\begin{remark}
In Bismut's viewpoint  $\nabla$ is a connection on an infinite dimensional Fr\'echet bundle on $B$.
\end{remark}
\begin{lemma}
The quasi-connection $\nabla$ increases the degree of the forms by one and satisfies the Leibniz rule
$$
\nabla (\xi \omega) = (-1)^{\pa \xi} \xi d_B\omega +  (\nabla \xi) \omega,
$$
where $\pa \xi$ is the form degree of the section $\xi$. Moreover, the curvature operator $\nabla^2$ of $\nabla$ is a fiberwise first order differential operator with coefficients in $2$-forms on the base manifold $B$. 
\end{lemma}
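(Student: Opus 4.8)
The plan is to establish the three assertions in turn, deriving everything from the factorization $\nabla=\varrho\circ\nabla^E$. Since the exterior covariant derivative $\nabla^E$ on $E\otimes\Lambda T^*M$ raises the exterior degree by one, and $\varrho$ extends to exterior powers as a degree--preserving algebra morphism which is the identity on the subbundle $\pi^*T^*B\subset T^*M$, the composite $\nabla$ raises the form degree by exactly one. For the Leibniz rule I would write $\xi\omega=\xi\wedge\pi^*\omega$, use the Leibniz rule for $\nabla^E$ against the ordinary form $\pi^*\omega$ together with $d(\pi^*\omega)=\pi^*(d_B\omega)$,
$$
\nabla^E(\xi\wedge\pi^*\omega)=(\nabla^E\xi)\wedge\pi^*\omega+(-1)^{\pa\xi}\,\xi\wedge\pi^*(d_B\omega),
$$
and then apply $\varrho$: since $\varrho$ is a morphism of $\pi^*\Omega^*(B)$--modules, that is $\varrho(\eta\wedge\pi^*\omega)=\varrho(\eta)\wedge\pi^*\omega$ (because $\varrho$ is an algebra map fixing $\pi^*T^*B$ pointwise), and since $\varrho(\xi)=\xi$ for $\xi$ already a section of $E\otimes\Lambda\pi^*T^*B$, one obtains $\nabla(\xi\omega)=(\nabla\xi)\,\omega+(-1)^{\pa\xi}\,\xi\,d_B\omega$.

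For the curvature I would first check that $\nabla^2$ is $\Omega^*(B)$--linear: applying the Leibniz rule twice to $\nabla^2(\xi\omega)$, the two cross terms $(-1)^{\pa\xi+1}(\nabla\xi)\,d_B\omega$ and $(-1)^{\pa\xi}(\nabla\xi)\,d_B\omega$ cancel while $\xi\,d_B^2\omega=0$, leaving $\nabla^2(\xi\omega)=(\nabla^2\xi)\,\omega$. Hence $\nabla^2$ is determined, through the $\Omega^*(B)$--module structure, by its restriction to $C^\infty(M,E)$, which is a $C^\infty(B)$--linear operator into $C^\infty(M,E\otimes\pi^*\Lambda^2T^*B)$; so it suffices to show this restriction is a fiberwise first--order differential operator with coefficients in $2$--forms on $B$. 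Viewing $\nabla$ as a connection on the Fr\'echet bundle over $B$ (as in the Remark following its definition), this restriction is precisely the curvature $R^\nabla$ of $\nabla$; for vector fields $X,Y$ on $B$ with horizontal lifts $X^H,Y^H$, the relation $\nabla_X=\nabla^E_{X^H}$ on $C^\infty(M,E)$ yields
$$
R^\nabla(X,Y)=\nabla^E_{X^H}\nabla^E_{Y^H}-\nabla^E_{Y^H}\nabla^E_{X^H}-\nabla^E_{[X,Y]^H}=R^E(X^H,Y^H)+\nabla^E_{[X^H,Y^H]-[X,Y]^H},
$$
where $R^E$ is the curvature of the Clifford connection $\nabla^E$.

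The key point, and essentially the only non--formal step, is that $[X^H,Y^H]-[X,Y]^H$ is a \emph{vertical} vector field: since $X^H,Y^H$ are $\pi$--related to $X,Y$, the bracket $[X^H,Y^H]$ is $\pi$--related to $[X,Y]$, so the difference projects to zero under $\pi_*$; moreover this difference is $C^\infty(B)$--bilinear and antisymmetric in $(X,Y)$, hence defines a section of $T_vM\otimes\pi^*\Lambda^2T^*B$. Therefore $\nabla^E_{[X^H,Y^H]-[X,Y]^H}$ is a fiberwise first--order differential operator with $2$--form coefficients, while $R^E(X^H,Y^H)$ is, as $(X,Y)$ varies, a section of $\End(E)\otimes\pi^*\Lambda^2T^*B$, i.e. a fiberwise zeroth--order operator; summing them gives $\nabla^2\in\psi^1(M|B,E;\Lambda^2B)$. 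I expect the sign bookkeeping in the double application of the Leibniz rule, and the verification that $\varrho$ is a morphism of $\pi^*\Omega^*(B)$--modules, to be the only slightly delicate formal points; the one substantive geometric input is the verticality of the fibration curvature $[X^H,Y^H]-[X,Y]^H$, which is what forces the order of $\nabla^2$ down to one.
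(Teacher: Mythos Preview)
Your proof is correct. For the degree-raising and Leibniz parts you follow exactly the paper's argument: write $\xi\omega=\xi\wedge\pi^*\omega$, apply the Leibniz rule for $\nabla^E$, use $d\pi^*\omega=\pi^*d_B\omega$, and project by $\varrho$. Your verification of $\Omega^*(B)$-linearity of $\nabla^2$ via the double Leibniz cancellation is also what the paper means by ``the classical computation''.

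Where you diverge is in establishing that $\nabla^2$ has fiberwise order at most one. The paper simply says: reduce to local coordinates on $B$, write out $\nabla$ via the local expression of $\nabla^E$, and check. You instead give an intrinsic argument: identify the restriction of $\nabla^2$ to $C^\infty(M,E)$ with the curvature $R^\nabla(X,Y)$, and decompose
\[
R^\nabla(X,Y)=R^E(X^H,Y^H)+\nabla^E_{[X^H,Y^H]-[X,Y]^H},
\]
noting that the first term is zeroth-order (an endomorphism) and the second is first-order because the fibration curvature $[X^H,Y^H]-[X,Y]^H$ is vertical. This is a genuinely different, coordinate-free route; it has the advantage of isolating exactly \emph{why} the order drops from two to one (namely, the verticality of the horizontal-distribution curvature), whereas the paper's local-coordinate check is quicker to execute but less transparent. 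Both are valid and lead to the same conclusion.
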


\begin{proof}
This argument is analogous to one introduced in  \cite{BHI}. As $\nabla^E$ is a connection and choosing the module action on the right, we have:
\begin{eqnarray*}
\nabla^E (\xi \omega) & = & (\nabla^E \xi)\wedge \pi^*\omega + (-1)^{\pa \xi} \xi \wedge d\pi^*\omega \\
& = & (\nabla^E \xi)\omega + (-1)^{\pa \xi} \xi \wedge \pi^*d_B\omega \\
& = & (\nabla^E \xi)\omega + (-1)^{\pa \xi} \xi d_B\omega.
\end{eqnarray*}
Therefore, applying $\varrho$, we obtain
$$
\nabla  (\xi \omega)  =  \varrho(\nabla^E \xi) \omega + (-1)^{\pa \xi} \xi d_B\omega  =  (\nabla \xi)\omega + (-1)^{\pa \xi} \xi d_B\omega. 
$$
Therefore, we have by the classical computation $\nabla^2 (\xi \omega) = (\nabla^2\xi)\omega$, i.e. $\nabla^2$ is $\Omega^*(B)$ linear, and hence it is a fiberwise differential operator with coefficients in $2$-forms on the base. By reducing to local cordinates on the base manifold $B$, one computes $\nabla^2$ using the local expression of $\nabla^E$. It is then easy to check that $\nabla^2$ is indeed a (fiberwise) first order differential operator, see also \cite{BHI} for more details.
\end{proof}

We shall say that an $\Omega^*(B)$-linear map on $C^\infty (M; E\otimes \pi^*\Lambda^*B)$ has degree $k\in \Z$, if it increases the form degree of the sections by $k$. Hence, such a map sends $C^\infty (M; E\otimes \pi^*\Lambda^hB)$ into $C^\infty (M; E\otimes \pi^*\Lambda^{h+k}B)$. We then set $\pa T = k$ and denote by $\maC^k$ the filtration obtained in this way, that is $\maC^k$ is the space of such maps with degree $\leq k$, which are $\Omega^*(B)$-linear operators.
\begin{lemma}
Let $T$ be an $\Omega^*(B)$-linear map of degree $k$, acting on $C^\infty (M; E\otimes \pi^*\Lambda^*B)$. Then the graded commutator 
$$
\pa (T) = [\nabla, T] := \nabla \circ T - (-1)^k T \circ \nabla,
$$
is an $\Omega^*(B)$-linear map which belongs to $\maC^{k+1}$. 
\end{lemma}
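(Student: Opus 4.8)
The plan is to treat the two assertions separately: that $\pa(T)=[\nabla,T]$ raises form degree by at most $k+1$, and that it is $\Omega^*(B)$-linear. The first is essentially formal, the second is the only computation of substance.

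For the degree statement I would simply invoke the previous lemma: $\nabla$ increases the form degree of sections by exactly one, while by hypothesis $T$ increases it by $k$. Hence both composites $\nabla\circ T$ and $T\circ\nabla$ send $C^\infty(M;E\otimes\pi^*\Lambda^h B)$ into $C^\infty(M;E\otimes\pi^*\Lambda^{h+k+1}B)$, and therefore so does their difference $\pa(T)$. Thus $\pa T$ has degree $\leq k+1$, and once we know it is $\Omega^*(B)$-linear this places it in $\maC^{k+1}$.

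For the linearity the key point is the standard principle that the graded commutator of a Leibniz-type operator with a module map annihilates the first-order term. Concretely, I would take $\xi\in C^\infty(M;E\otimes\pi^*\Lambda^*B)$ of form degree $\pa\xi$ and $\omega\in\Omega^*(B)$, and expand $[\nabla,T](\xi\omega)=\nabla T(\xi\omega)-(-1)^k T\nabla(\xi\omega)$ using (i) the $\Omega^*(B)$-linearity of $T$, namely $T(\xi\omega)=(T\xi)\omega$ with $T\xi$ of form degree $\pa\xi+k$, and (ii) the Leibniz rule for $\nabla$ from the previous lemma, $\nabla(\eta\omega)=(\nabla\eta)\omega+(-1)^{\pa\eta}\eta\, d_B\omega$, applied once with $\eta=T\xi$ and once inside $T(\nabla(\xi\omega))$ with $\eta=\xi$. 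Collecting terms, the two contributions involving $d_B\omega$ carry signs $(-1)^{\pa\xi+k}$ and $-(-1)^{k+\pa\xi}$ respectively and hence cancel, leaving $[\nabla,T](\xi\omega)=\bigl([\nabla,T]\xi\bigr)\omega$, which is exactly $\Omega^*(B)$-linearity. Since $\C$-linearity and additivity of $\pa(T)$ are immediate, this computation establishes the full claim.

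I do not expect a genuine obstacle. The only thing to be careful about is the bookkeeping of signs in the graded Leibniz rule: one must remember that $\nabla$ acts on $(T\xi)\omega$ with the degree of $T\xi$, namely $\pa\xi+k$ and not $\pa\xi$, in the sign exponent, and that the sign $(-1)^k$ built into the definition of the graded commutator $[\nabla,T]$ is precisely what forces the two $d_B\omega$ terms to cancel rather than to add.
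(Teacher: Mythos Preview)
Your proposal is correct and follows essentially the same approach as the paper: the paper's proof is precisely the sign computation you describe, expanding $\pa(T)(\xi\omega)$ via the Leibniz rule for $\nabla$ and the $\Omega^*(B)$-linearity of $T$, and observing that the two $d_B\omega$ terms cancel. The paper omits the explicit degree check you include, treating it as evident.
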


\begin{proof}
Fix $T\in \maC^k$. For $\xi\in C^\infty (M; E\otimes \pi^*\Lambda^hB)$ and $\omega\in \Omega^*(B)$, we can write
\begin{eqnarray*}
\pa (T) (\xi \omega ) & = & \nabla( (T\xi)\omega) - (-1)^k T(\nabla\xi)\omega - (1)^{k+h} (T\xi) d_B\omega \\
& = & \nabla (T\xi) \omega + (-1)^{h+k} (T\xi) d_B\omega -(-1)^k T(\nabla\xi)\omega - (1)^{k+h} (T\xi) d_B\omega \\
& = & (\pa (T) \xi) \omega.
\end{eqnarray*}
\end{proof}

\begin{proposition}
The derivation $\pa: \maC^* \to \maC^*$ preserves the subspace $\psi^{-\infty} (M|B,E; \Lambda^* B)$. More precisely, the derivation $\pa$ preserves each $\psi^{h} (M|B,E; \Lambda^* B)$ for  $h\in \Z$.
\end{proposition}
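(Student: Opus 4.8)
The plan is to reduce to a computation local over the base and then exploit the fact that the quasi-connection $\nabla$, being manufactured from the Clifford connection $\nabla^{E}$, has \emph{scalar} leading fiberwise symbol, so that commuting with it cannot raise the fiberwise pseudodifferential order. The previous lemma already tells us that $\partial(T)=[\nabla,T]$ is $\Omega^{*}(B)$-linear and increases the form degree by one; what remains is to show that, when $T\in\psi^{h}(M|B,E;\Lambda^{*}B)$, the operator $\partial(T)$ is again fiberwise classical pseudodifferential of order $\le h$. Since being a fibered $\Psi$DO with differential-form coefficients is a property local over $B$, and restriction is compatible with the (pseudo)local operators $\nabla$ and $T$, I would work over $\pi^{-1}(U)$ for a coordinate chart $U\subset B$ with coordinates $y=(y^{1},\dots,y^{q})$, write $T=\sum_{I}P_{I}\,dy^{I}$ with each $P_{I}$ a smooth $y$-family of fiberwise classical $\Psi$DOs of order $h$, and compute $[\nabla,T]\xi$ on a $0$-form section $\xi\in C^{\infty}(M_{U},E)$.

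Next I would unwind the local form of $\nabla$. Writing $\widetilde{\partial_{i}}=\partial_{y^{i}}+\Gamma_{i}$ for the horizontal lift of $\partial_{y^{i}}$, where $\Gamma_{i}=\sum_{a}\Gamma_{i}^{a}(x,y)\partial_{x^{a}}$ is a fiberwise vector field determined by $H$, one has on $0$-forms $\nabla\xi=\sum_{i}(\nabla^{E}_{\widetilde{\partial_{i}}}\xi)\,dy^{i}$ with $\nabla^{E}_{\widetilde{\partial_{i}}}=\widetilde{\partial_{i}}\otimes\mathrm{id}_{E}+B_{i}$, $B_{i}$ the zeroth-order (endomorphism-valued) connection term in the chosen local frame of $E$. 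Substituting this into $\partial(T)\xi=\nabla(T\xi)-(-1)^{\partial T}T(\nabla\xi)$ and using $d_{B}(dy^{I})=0$, the terms involving $\partial_{y^{i}}\xi$ cancel (which re-confirms $\Omega^{*}(B)$-linearity) and one is left with
$$
\partial(T)\,\xi=\sum_{i,I}\Bigl((\partial_{y^{i}}P_{I})\,\xi+[\Gamma_{i},P_{I}]\,\xi+[B_{i},P_{I}]\,\xi\Bigr)\,dy^{i}\wedge dy^{I}.
$$
It then suffices to observe that every operator in the bracket is fiberwise classical of order $\le h$: $\partial_{y^{i}}P_{I}$ because differentiating the coefficients of a $\Psi$DO in a smooth parameter leaves its order unchanged; $[B_{i},P_{I}]$ because $B_{i}$ has order $0$; and $[\Gamma_{i},P_{I}]$ because the principal symbol of $\Gamma_{i}$ is a scalar multiple of $\mathrm{id}_{E}$ (namely the symbol of the vector field $\widetilde{\partial_{i}}$), hence commutes to top order with the principal symbol of $P_{I}$, so the commutator drops one order and lies in order $\le h$. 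Therefore $\partial(T)|_{\pi^{-1}(U)}\in\psi^{h}(M_{U}|U,E;\Lambda^{*}U)$, and patching over a cover of $B$ gives $\partial(T)\in\psi^{h}(M|B,E;\Lambda^{*}B)$. The assertion for $\psi^{-\infty}$ then follows by intersecting over $h\in\Z$; alternatively one reads it off the groupoid description, since under the identification $\psi^{-\infty}(M|B,E;\Lambda^{*}B)\cong C^{\infty}(G,\Hom(E)\otimes\Lambda^{*}T^{*}B)$ the derivation $\partial$ becomes a first-order differential operator (covariant differentiation along $G$ built from $\nabla^{E}$ and $d_{B}$), which manifestly preserves smooth sections.

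The one genuinely non-formal point — and the step I would write out in detail — is the claim that commuting with $\nabla$ does not increase the fiberwise order, i.e. that $[\Gamma_{i},P_{I}]$ has order $\le h$. This is exactly the classical fact that a connection has scalar principal symbol, transplanted to the fibered/parametrized pseudodifferential calculus: in the polyhomogeneous calculus the leading terms of the compositions $\Gamma_{i}P_{I}$ and $P_{I}\Gamma_{i}$ coincide because $\sigma_{1}(\Gamma_{i})$ is scalar, so their difference lies in order $\le h$. Everything else — locality over $B$ of the $\Psi$DO property, stability of the one-step polyhomogeneous class under differentiation of coefficients and under composition, and compatibility of restriction with $[\nabla,\cdot\,]$ — is routine and I would simply invoke the fibered $\Psi$DO calculus of \cite{AtiyahSinger4}.
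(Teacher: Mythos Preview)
Your proof is correct and follows essentially the same route as the paper's: both localize over $B$, decompose $\nabla$ into a base-direction derivative, a fiberwise vector-field piece coming from the horizontal lift, and a zeroth-order connection form, and then argue that each contributes an operator of fiberwise order $\le h$ (parameter differentiation of the symbol, scalar principal symbol for the fiberwise vector field, and order-zero commutator, respectively). Your additional remark on the groupoid picture for $\psi^{-\infty}$ is a nice complement but not needed, as the paper also deduces the smoothing case by intersecting over $h$.
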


\begin{proof}
A classical computation shows that, in local coordinates over a small open set $V$ of $M$, the quasi-connection $\nabla$ has the expression
$$
\nabla  = d^\nu \otimes I_N+ \omega, \quad\text{ with } \omega\in M_N (C^\infty (V, \pi^*(T^*B))),
$$
where we have used a vector bundle isomorphism $E|_V \to V \times \C^N$ and  $d^\nu$ denotes the transverse de Rham derivative (in the direction $\pi^*(\Lambda T^*B)$) given with obvious notations by 
$
d^\nu = \varrho \circ d.
$
Taking commutators with the $0$-th order term $\omega$ clearly preserves $\psi^{h} (V|B,E; \Lambda^* B)$, since $\omega$ belongs to $\psi^{0} (V|B,E; \Lambda^1 B)$. Using a trivialization 
$$
(x;b)=(x_1, \cdots, x_p; b_1, \cdots, b_q) : V \longrightarrow \R^p \times \R^q.
$$
of the fibration $V\to U$ and the open set $U\subset B$,
one finds that there exists $A \in M_{q,p} (C_c^\infty (\R^n))$ such that
$$
d^\nu (f) = \sum_{i=1}^q [\frac{\pa f}{\pa b_i} + \sum_{j=1}^p A_{ij} \frac{\pa f}{\pa x_j}] db_i.
$$ 
The expression $\sum_{i=1}^q  db_i \sum_{j=1}^p A_{ij} \frac{\pa }{\pa x_j}$  is an element of $\psi^{1} (V|B; \Lambda^1 B)$,  a scalar operator. Therefore, when tensored with the identity of $\C^N$ it has a diagonal matrix as fiberwise principal symbol. 
Such a diagonal matrix graded commutes with the principal symbol of any element of $\psi^{h} (V|B,E; \Lambda^* B)$. So, the commutator with this operator preserves the order of the pseudodifferential operators. It thus remains to compute the commutator of  $d_B=\sum_{i=1}^q db_i \frac{\pa }{\pa b_i}$ with an element of $\psi^{h} (V|B,E; \Lambda^* B)$. Since the elements $P$ of $\psi^{h} (M|B,E; \Lambda^* B)$ are $\Omega^*(B)$-linear, we can restrict to $P\in \psi^{h} (M|B,E)$ and by pseudolocality, we can even assume that $P\in \psi^{h} (V|B,V\times \C^N)$ is given in the local coordinates $(x,b)$ by
$$
P (f) (x;b) = \frac{1}{(2\pi)^{p/2}}\int_{V_b \times \R^p} a (x;b; \xi) e^{i(x-x')\xi} f(x';b) dx' d\xi,
$$
where $f\in C_c^\infty (V, \C^N)$ and $a$ is the local total symbol of the classical fiberwise pseudodifferential operator $P$, an $N\times N$ matrix. A simple computation shows that the commutator $[d_B \otimes I_N, P]$ is given by the same local formula but with $a$ replaced by the matrix $d_B (a)$. Since this latter matrix is a classical symbol of order $h$, the proof is complete.
\end{proof}

\subsection{Bivariant cochains}

As before we are  considering  a smooth locally trivial fibration $\pi:M\to B$ of closed manifolds, together with the fiberwise generalized Dirac  operator $D$ acting on the smooth sections of the Clifford bundle $E$ over $M$. The dimension of the fibers is denoted by $p$ and the dimension of the base is $p'$. Recall that $\Omega^*(B, E)$ is the space of smooth sections over $M$ of the bundle $E\otimes \pi^*\Lambda^*T^*B$. This is a graded module over the graded Grassmann algebra of differential forms on the base manifold $B$. We defined in Section \ref{Prelim} the quasi-connection $\nabla$ associated with a connection on $E$ and the choice of a horizontal distribution $H$. 

Our bivariant $(n,k)$-cochains are linear maps $f: \maB_n = \maA^{\otimes_n} \to L_k$ where $L=\oplus_k L_k$ is a graded vector space (or a graded module over some algebra) or a graded algebra that will often be endowed with a `connection'. More precisely, we are interested in the graded spaces
$$
L= \Omega^*(B;E), \  \ L= \Omega^*(B), \ \ L_{-\infty}= \psi^{-\infty} (M|B, E; \Lambda^*B)\  \text{ and }\  L= \maL(\maE).
$$
Here $\psi^{-\infty} (M|B, E; \Lambda^*B)$ is the algebra of fiberwise smoothing operators with coefficients in forms. The space $\maE$ is the Hilbert module over the $C^*$-algebra $C(\Lambda^*T^*B)$, of continuous sections {{of $\Lambda^*T^*B$ over $B$, which is the completion of the smooth sections over $M$ of the bundle $E\otimes \pi^*\Lambda^*T^*B$.}} (For background on Hilbert C$^*$ modules see \cite{Lance}.) We choose connections on these  graded spaces (except for the last one) $$
\nabla, d_B=\text{de Rham differential} \text{ and } [\nabla, \cdot]
$$
respectively.
Note that we use the convention that all the commutators are graded ones. The space $\Hom(\maB,L)$ will then be bi-graded and we shall use the total grading for the commutators.  For any $P\in \psi^{h} (M|B, E; \Lambda^*B)$ and any $b\in B$, the operator $P_b$ belongs to $\psi^{h} (M_b, E|_{M_b}) \otimes \Lambda^*(T_b^*B)$. Therefore, for $h\leq 0$, $P_b$ extends to a $\Lambda^*(T_b^*B)$-linear bounded operator of the Hilbert space $L^2(M_b, E|_{M_b}) \otimes \Lambda^*(T_b^*B)$, hence an element of 
$$
B( L^2(M_b, E|_{M_b})) \otimes \Lambda^*(T_b^*B).
$$
Next, using a basis of the finite dimensional vector space $\Lambda^*(T_b^*B))$, we define a $\Lambda^*(T_b^*B)$-valued graded trace 
$$
\tau : L^1( L^2(M_b, E|_{M_b})) \otimes \Lambda^*(T_b^*B).\longrightarrow \Lambda^*(T_b^*B),
$$
where $L^1( L^2(M_b, E|_{M_b}))$ is the usual ideal of trace class operators on the Hilbert space $L^2(M_b, E|_{M_b})$. The expression ``graded trace'' means that $\tau$ vanishes on graded commutators, the grading being produced by the degree of the forms in $\Lambda^*(T_b^*B)$. The classical theory of pseudodifferential operators shows that $\psi^{h} (M_b, E|_{M_b}) \subset L^1( L^2(M_b, E|_{M_b}))$, for $h < -p$. Putting these traces together, we inherit a graded trace 
$$
\tau: L_{h}= \psi^{h} (M|B, E; \Lambda^*B)\longrightarrow \Omega^*(B) \text{ for any } h<-p.
$$
We shall for simplicity restrict $\tau$ to fiberwise smoothing operators and only consider
$$
\tau: L_{-\infty}= \psi^{-\infty} (M|B, E; \Lambda^*B)\longrightarrow \Omega^*(B).
$$
\begin{lemma}
The graded trace $\tau$ is closed, i.e. it satisfies the relation $\tau \circ \pa = d_B \circ \tau$.
\end{lemma}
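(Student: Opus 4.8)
The plan is to reduce the identity $\tau\circ\pa = d_B\circ\tau$ to the local description of the quasi-connection $\nabla$ found in the proof that $\pa$ preserves pseudodifferential orders, and then to invoke the graded-trace property of $\tau$ on the smoothing ideal. Since $\tau$, $\pa$ and $d_B$ are globally defined, the asserted identity is $\C$-linear in $P\in\psi^{-\infty}(M|B,E;\Lambda^*B)$, and both sides are local on $B$, it suffices to verify it for $P$ supported over a chart $U\subset B$ trivializing the fibration, with $V=\pi^{-1}(U)$ and coordinates $(x;b)$. Over $V$ one has $\nabla=d^\nu\otimes I_N+\omega$ with $\omega\in M_N(C^\infty(V,\pi^*T^*B))$ and $d^\nu=d_B\otimes I_N+R_0$ where $R_0=\sum_i db_i\sum_j A_{ij}\,\pa/\pa x_j\otimes I_N$. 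Setting $R:=R_0+\omega$, I would write $\nabla=(d_B\otimes I_N)+R$ over $V$, where $R$ has form degree $1$ and lies in $\psi^{1}(V|B,E;\Lambda^1B)$; hence $\pa(P)=[\nabla,P]=[d_B\otimes I_N,P]+[R,P]$.

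First I would dispose of $[R,P]$. As $P$ is fiberwise smoothing, $R_bP_b$ and $P_bR_b$ are fiberwise smoothing — in particular fiberwise trace class — for every $b\in U$, and the ordinary operator-trace identity $\Tr(R_bP_b)=\Tr(P_bR_b)$ holds (comparing Schwartz kernels on the compact fiber). Expanding $R_b$ and $P_b$ in a local basis of $\Lambda^*T_b^*B$ and tracking the Koszul signs obtained by commuting the $1$-form part of $R$ past the form part of $P$, this yields $\tau_b(R_bP_b)=(-1)^{\pa P}\tau_b(P_bR_b)$ for $P$ of homogeneous form degree, i.e. $\tau_b\!\big(R_bP_b-(-1)^{\pa P}P_bR_b\big)=0$, which is exactly $\tau([R,P])=0$. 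Conceptually this is the statement that the form-graded trace $\tau$ annihilates commutators of the smoothing ideal with any fiberwise (form-valued) pseudodifferential operator.

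Next I would compute $\tau([d_B\otimes I_N,P])$. By the symbol computation carried out in the proof that $\pa$ preserves pseudodifferential orders, $[d_B\otimes I_N,P]$ is, in the coordinates $(x;b)$, the operator obtained from $P$ by replacing its local symbol $a$ with $d_B(a)$; equivalently its fiberwise Schwartz kernel is $d_B K_P$, the base differential of the kernel of $P$. Restricting to the diagonal, applying the pointwise endomorphism trace $\tr_E$ and integrating over the fibers — an operation that commutes with $d_B$ because the fibers are compact — gives $\tau([d_B\otimes I_N,P])=d_B\!\int_{M/B}\tr_E\!\big(K_P|_{\mathrm{diag}}\big)=d_B(\tau P)$ over $U$. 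Adding the two contributions, $\tau(\pa P)=0+d_B(\tau P)$ over $U$, and a partition of unity on $B$ globalizes the identity.

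I expect the main obstacles to be purely technical: keeping the Koszul signs in the graded commutators straight, and justifying both the fiberwise trace identity for the unbounded part $R_0$ of $R$ and the interchange of $d_B$ with fiberwise integration — all of which are standard once one uses compactness of the fibers. The conceptual point is simply that, locally, $\nabla$ differs from the de Rham differential $d_B$ by an honest element of the fiberwise pseudodifferential algebra, and $\tau$ — a trace on the smoothing ideal — kills commutators with such elements, so the only part of $\pa P$ visible to $\tau$ is the $d_B$-part, which integrates to $d_B\tau(P)$.
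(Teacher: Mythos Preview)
Your proposal is correct and follows essentially the same route as the paper: localize, split $\nabla$ as $d_B$ plus a fiberwise (form-valued) pseudodifferential piece, use the trace property to kill the commutator with the latter, and compute the $d_B$-contribution via the Schwartz kernel. The only organizational difference is that the paper first reduces to $P$ with trivial form degree (using $\Omega^*(B)$-linearity) and treats the zero-th order term $\omega$ and the first-order vertical derivatives $\partial/\partial x_j$ separately, whereas you keep the form degree general and bundle both into a single $R$; neither choice affects the substance of the argument.
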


\begin{proof}
({\it cf} \cite{BHI}). Fix a $P\in \psi^{-\infty} (M, E\otimes \Lambda^*(T^*B))$. Using the module structure, we can assume that  $P\in \psi^{-\infty} (M, E)$. Notice that each such $P$ has a smooth Schwartz kernel $k_P$. In local coordinates, $\nabla= d^\nu + M_\omega$ with $d^\nu= \varrho\circ d$ the de Rham derivative in the direction $\pi^*(\Lambda T^*B)$ and $M_\omega$ a zero-th order differential operator with coefficients in $1$-forms on $B$.  Clearly, the trace of the commutator $[M_\omega, P]$ is the integral {{of}} the trace of a commutator and hence is trivial. Using compactly supported smooth cut-off functions, we can assume that the smooth kernel $k_P $ is supported within a trivial open set diffeomorphic to $U\times U' \times W$ where $U$ and $U'$ are trivializing open sets in the typical fiber manifold $F$ and $W$ is a trivializing open set in the base $B$. The operator $d^\nu$ is given in the local coordinates $(x_1, \cdots, x_p; b_1, \cdots, b_q)$ of $U\times W$, by
$$
d^\nu = d_B + \sum_{i=1}^q db_i \sum_{j=1}^p A_{ij} \frac{\pa}{\pa x_j}\text{ with } A\in M_{q,p} (C_c^\infty (U\times W)) \text{ and } d_B= \sum_{i=1}^q db_i \frac{\pa}{\pa b_i}.
$$
We observe that
$$
\tau ([d_B, P])(b) = \int_{M_b} (d_B k_P) (b;x,x) dx = (d_B \circ \tau) (P) (b).
$$
It thus remains to show that $\tau ([\frac{\pa}{\pa x_j} , P]) = 0$. But since $P$ is fiberwise smoothing and $\frac{\pa}{\pa x_j}$ is a fiberwise first order differential operator, the proof is complete.
\end{proof}

 We follow \cite{Quillen88} and introduce an extra Clifford variable $\sigma$ of degree $1$ and central in the graded sense (it graded commutes with all operators). Hence we replace the algebra $L_{-\infty}$ by $L_{-\infty} [\sigma]$. We assume  that the fibers of our fibration are odd dimensional, so $p$ is odd. Then we extend the graded closed trace $\tau$ so that
$$
\pa: L_{-\infty}[\sigma] \rightarrow L_{-\infty}[\sigma] \text{ and } \tau_\sigma : L_{-\infty}[\sigma] \longrightarrow L=\Omega^*(B),
$$
by setting $\tau_\sigma (T + \sigma S) :=  \tau (S).$
We may also consider the algebra $\psi^\infty (M,E;\Lambda^*B)[\sigma]$ in the sequel. The total degree of an element $A$ in one of these extensions then takes into account  $\sigma$ and will  be denoted $|A|$.

\begin{lemma}\label{CommNabla}
The map $\tau_\sigma$ is a graded trace on the Clifford extension $L_{-\infty}[\sigma]$, with values in the graded algebra $\Omega^*(B)$. Moreover, it satisfies the relation
$
\tau_\sigma\circ \pa + d_B\circ \tau_\sigma = 0.
$
\end{lemma}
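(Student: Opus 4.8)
The plan is to deduce both assertions from the corresponding facts for $\tau$ on $L_{-\infty}$ established above, namely that $\tau$ is a graded trace with values in $\Omega^*(B)$ and that it is closed, $\tau\circ\pa = d_B\circ\tau$; the only real work is to keep track of the signs produced by the odd central variable $\sigma$.

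For the graded trace property, I would write a homogeneous element of $L_{-\infty}[\sigma]$ as $A = A_0 + \sigma A_1$ with $A_0, A_1\in L_{-\infty}$, so that if $A$ has total degree $a$ then $A_0$ has form degree $a$ and $A_1$ has form degree $a-1$; likewise $B = B_0 + \sigma B_1$ of total degree $b$. Using that $\sigma$ graded commutes with every operator (so that $x\sigma = (-1)^{|x|}\sigma x$) and that $\sigma^2$ is a scalar, one multiplies out $AB$ and isolates the coefficient of $\sigma$, obtaining
$$
\tau_\sigma(AB) = (-1)^{a}\,\tau(A_0 B_1) + \tau(A_1 B_0),
$$
and similarly for $\tau_\sigma(BA)$; note that $\sigma^2$ affects only the $\sigma$-free part of $AB$, which $\tau_\sigma$ annihilates, so its precise value is immaterial. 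I would then apply the graded trace identity for $\tau$, i.e. $\tau(XY)=(-1)^{|X||Y|}\tau(YX)$, to each of the two terms with the degrees $(a,b-1)$ and $(a-1,b)$, and compare with the expansion of $\tau_\sigma(BA)$. A short sign computation, whose only nontrivial input is $(-1)^{-b}=(-1)^{b}$, yields $\tau_\sigma(AB) = (-1)^{ab}\,\tau_\sigma(BA)$, which is precisely the graded trace property for the total grading $|\cdot|$.

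For the closedness relation I would first observe that the extension of $\pa = [\nabla,\cdot]$ to $L_{-\infty}[\sigma]$ kills $\sigma$ — because $\sigma$ graded commutes with $\nabla$ — and remains a graded derivation of total degree one, so that $\pa(T + \sigma S) = \pa T - \sigma\,\pa S$, the sign being forced by $|\sigma|=1$. Applying $\tau_\sigma$ then annihilates the $\sigma$-free term $\pa T\in L_{-\infty}$ and leaves $\tau_\sigma(\pa(T+\sigma S)) = -\tau(\pa S) = -d_B\,\tau(S)$ by closedness of $\tau$, while $d_B\,\tau_\sigma(T+\sigma S) = d_B\,\tau(S)$; adding the two gives zero, which is the claimed identity $\tau_\sigma\circ\pa + d_B\circ\tau_\sigma = 0$. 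I do not expect a genuine obstacle: the whole lemma is a sign-bookkeeping exercise, and the one point deserving attention is exactly this sign flip relative to $\tau\circ\pa = d_B\circ\tau$, which is an unavoidable consequence of $\sigma$ being odd and is the reason the identity for $\tau_\sigma$ appears with a plus sign. The main care throughout is in fixing consistent conventions for the graded commutator, the graded centrality of $\sigma$, and the grading induced on $L_{-\infty}[\sigma]$.
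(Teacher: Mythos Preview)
Your proposal is correct and follows essentially the same approach as the paper: both decompose homogeneous elements as $T+\sigma S$, extract the $\sigma$-coefficient of the product to get $\tau_\sigma(AA')=(-1)^{|A|}\tau(TS')+\tau(ST')$, apply the graded trace property of $\tau$ termwise, and for the closedness use $\pa(T+\sigma S)=\pa T-\sigma\,\pa S$ together with $\tau\circ\pa=d_B\circ\tau$. The paper's proof is the same computation with slightly different notation.
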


\begin{proof}
Let $A=T+\sigma S$ and $A'=T'+\sigma S'$ be elements of $L_{-\infty}[\sigma]$ with degrees  $k$ and $k'$ respectively. This means that $T$ and $T'$ have respectively degrees $k$ and $k'$ while $S$ and $S'$ have respectively degrees $(k-1)$ and $(k'-1)$. We then compute
\begin{eqnarray*}
 \tau_\sigma (AA') &= & (-1)^k \tau (TS') + \tau (ST')\\
& = & (-1)^k (-1)^{k(k'-1)} \tau (S'T) + (-1)^{(k-1)k'} \tau (T'S)\\
& = & (-1)^{kk'} \left[ \tau (S'T) + (-1)^{k'} \tau (T'S)  \right]\\
& = & (-1)^{kk'} \tau_\sigma (A' A).
\end{eqnarray*}
In the same way we have
$$
\tau_\sigma ([\nabla, T] - \sigma [\nabla, S]) =  - \tau ([\nabla, S]) =  - d_B \tau(S) = - d_B \tau_\sigma (T+\sigma S).
$$
\end{proof}

\subsection{The heat semigroup and Duhamel}
The operator $\nabla$ is used to associate with the generalized Dirac operator $D$, different superconnections \cite{Bismut}. They will be for us given by
$$
\A_\sigma (A) := \B_\sigma + A,
$$
where $A$ is a zero-th order fiberwise pseudodifferential operator with coefficients in differential forms of positive degree (in applications $\geq 2$) on $B$. 
Here, the superconnection $\B_\sigma$ is defined by 
$$
\B_\sigma = \nabla + \sigma D.
$$
We are mainly interested in the superconnection $\B_\sigma$ or in the Bismut superconnection together with its metric rescalings. In order to simplify the exposition of our results, we restrict ourselves to the case $\A(0) = \B_\sigma$ and shall briefly explain later how the proofs extend to the general case. 

We have $\B_\sigma^2 = D^2 + X$ where $X= \nabla^2 - \sigma [\nabla, D]$. Note that the operator $X$ is a fiberwise differential operator of order one with coefficients in differential forms of positive degree $\leq 2$. 

\begin{definition}
Following \cite{BGV} we will use the notation $e^{-u \B_\sigma^2}$ to denote the semigroup   (that is, the solution to the heat equation)  given by the following  finite perturbative sum of strong integrals
$$
e^{-u \B_\sigma^2} = \sum_{m\geq 0} (-u)^m \int_{\Delta (m)} e^{-u v_0 D^2} X e^{-u v_1 D^2} \cdots X e^{-u v_m D^2} dv_1 \cdots dv_m.
$$
where $\Delta(m) =\{(u_0, \cdots, u_m) \in \R^{m+1},  \sum u_j =1\}$ is the $m$-simplex.
\end{definition}

Since the base manifold $B$ is finite dimensional, the above sum is finite. Note also that classical results show that the operator $D$ is a self-adjoint regular operator on the Hilbert $C(B, \Lambda^*B)$-module $\maE$, see for instance \cite{Vassout} or \cite{BenameurPiazza}. Hence the heat operator  $e^{-t D^2}$ can be viewed as an adjointable (bounded) operator on $\maE$,
{so that it belongs to the algebra $\maL(\maE)$ of all such operators.} Moreover, since the operator $e^{-t D^2}$ is a smoothing operator, the heat operator $e^{-u \B_\sigma^2}$ associated with the superconnection $\B_\sigma$  defined above, is also a smoothing operator but with coefficients in differential forms of the base $B$. As a consequence, for any $u>0$, the fiberwise graded trace $\tau_\sigma (e^{-u \B_\sigma^2})$ makes sense as a differential form on the base manifold $B$.

\begin{lemma}\label{Duhamel} (Duhamel principle) 
 For any element $A$ of the algebra $\psi^\infty (M|B,E;\Lambda^*B)[\sigma]$,  the  following equality holds in $\maL (\maE)$
$$
[A, e^{-\B_\sigma^2}] = -\int_0^1 e^{- s \B_\sigma^2} [A, \B_\sigma^2] e^{-(1-s) \B_\sigma^2} ds. 
$$ 
\end{lemma}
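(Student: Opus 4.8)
The plan is to reduce the identity to a statement about a single $s$-dependent family of operators and then differentiate. First I would fix $A \in \psi^\infty(M|B,E;\Lambda^*B)[\sigma]$ and introduce the auxiliary family
\[
\Phi(s) := e^{-s\B_\sigma^2}\, A\, e^{-(1-s)\B_\sigma^2}, \qquad s\in[0,1],
\]
viewed as a path in $\maL(\maE)$. The two endpoints are $\Phi(0) = A\, e^{-\B_\sigma^2}$ and $\Phi(1) = e^{-\B_\sigma^2}\, A$, so that $\Phi(1)-\Phi(0) = [A, e^{-\B_\sigma^2}]$ up to sign, which is exactly the left-hand side of the claimed formula. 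Hence it suffices to show $\Phi$ is differentiable in $s$ with derivative
\[
\Phi'(s) = -\, e^{-s\B_\sigma^2}\,[A,\B_\sigma^2]\, e^{-(1-s)\B_\sigma^2},
\]
and then integrate from $0$ to $1$, invoking the fundamental theorem of calculus for the (norm-continuous, or at least Bochner-integrable) $\maL(\maE)$-valued function $s\mapsto \Phi'(s)$.

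The computation of $\Phi'(s)$ is the formal heart of the argument: writing $e^{-s\B_\sigma^2}$ as the finite perturbative sum from the preceding definition, one differentiates in $s$, using that $\tfrac{d}{ds}e^{-s D^2} = -D^2 e^{-s D^2} = -e^{-s D^2}D^2$ on the relevant domains and that the perturbation sum is finite because $B$ is finite-dimensional; the product rule yields
\[
\frac{d}{ds}\Phi(s) = (-\B_\sigma^2)\, e^{-s\B_\sigma^2} A\, e^{-(1-s)\B_\sigma^2} + e^{-s\B_\sigma^2} A\, (\B_\sigma^2)\, e^{-(1-s)\B_\sigma^2}
= -\, e^{-s\B_\sigma^2}\,[A,\B_\sigma^2]\, e^{-(1-s)\B_\sigma^2},
\]
where in the last step one commutes $\B_\sigma^2$ past $e^{-s\B_\sigma^2}$, which is legitimate since $\B_\sigma^2$ commutes with its own heat semigroup. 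Here one must be slightly careful that $[A,\B_\sigma^2]$, although a fiberwise differential operator of positive order, is ``smoothed'' on either side by the heat operators, so each term $e^{-s\B_\sigma^2}[A,\B_\sigma^2]e^{-(1-s)\B_\sigma^2}$ is genuinely an element of $\maL(\maE)$ (indeed of the smoothing ideal with form coefficients), and the integral makes sense.

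The main obstacle is the functional-analytic bookkeeping rather than the algebra: one needs that $A$ maps the common core of powers of $\B_\sigma^2$ into itself (or at least that the compositions above are densely defined and extend boundedly), that $s\mapsto e^{-s\B_\sigma^2}$ is differentiable in the strong operator topology with the stated derivative on an appropriate core, and that the resulting derivative $s\mapsto \Phi'(s)$ is continuous (or Bochner-integrable) in $\maL(\maE)$ so that $\int_0^1\Phi'(s)\,ds$ is well defined and equals $\Phi(1)-\Phi(0)$. All of this rests on the regularity of $D$ as a self-adjoint regular operator on $\maE$ and the smoothing property of $e^{-tD^2}$ noted just above the lemma, combined with the finiteness of the perturbative expansion of $e^{-u\B_\sigma^2}$; the estimates needed are the standard ones for heat-semigroup–smoothed pseudodifferential operators, and I would carry them out fiberwise and then patch, exactly as in \cite{BGV}. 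Once differentiability and integrability are in hand, integrating $\Phi'$ over $[0,1]$ gives the identity.
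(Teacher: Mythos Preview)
Your approach is correct in substance (modulo a harmless sign slip in the last displayed computation: $-\B_\sigma^2 A + A\B_\sigma^2 = +[A,\B_\sigma^2]$, not $-[A,\B_\sigma^2]$; this cancels against $\Phi(1)-\Phi(0) = -[A,e^{-\B_\sigma^2}]$ to give the stated formula), but it takes a genuinely different route from the paper. The paper does not differentiate the auxiliary function $\Phi(s)=e^{-s\B_\sigma^2}Ae^{-(1-s)\B_\sigma^2}$. Instead it first invokes the classical Duhamel formula for $D^2$ alone (where $e^{-sD^2}$ is defined by spectral calculus on $\maE$ and the heat equation is immediate), then expands both $e^{-s\B_\sigma^2}$ and $e^{-(1-s)\B_\sigma^2}$ on the right-hand side by their finite perturbative sums, and after the change of variables dictated by $\sum_j sv_j + \sum_i (1-s)w_i = 1$ matches the result term by term with the perturbative expansion of $[A,e^{-\B_\sigma^2}]$.

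Your route is more direct and conceptual, but it implicitly relies on the fact that the perturbative sum \emph{defining} $e^{-s\B_\sigma^2}$ actually satisfies the heat equation $\tfrac{d}{ds}e^{-s\B_\sigma^2} = -\B_\sigma^2 e^{-s\B_\sigma^2} = -e^{-s\B_\sigma^2}\B_\sigma^2$ (equivalently, that $\B_\sigma^2$ commutes with its own heat semigroup). That is true---it is the Volterra-series characterization, as in \cite{BGV}---but it is not proved in the paper before this point, so you are importing a nontrivial ingredient. The paper's approach avoids this by reducing everything to the self-adjoint operator $D^2$, for which the heat-equation identity is available from spectral theory, at the price of a more combinatorial bookkeeping step.
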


\begin{proof}
This lemma can be proved following \cite{BGV}. We sketch the argument.
Following \cite{ReedSimon} p. 263-264 the  Duhamel formula is known to be satisfied by the family $D^2$ using just the fact that for $\xi_0$ in our Hilbert space of $L^2$ sections
$\xi_t=e^{-tD^2}\xi_0$ solves the heat equation. If $\xi_0\in \maE$ then using the Schwartz kernel for $e^{-tD^2}$ we see that $\xi_t\in \maE$
from which Duhamel follows in $\maE$.
Now
if we replace $e^{- s \B_\sigma^2}$ and $e^{-(1-s) \B_\sigma^2}$ by the finite expansion sums, we obtain
\begin{multline*}
 e^{- s \B_\sigma^2} [A, \B_\sigma^2] e^{-(1-s) \B_\sigma^2} =  \sum_{m,m'\geq 0} (-1)^{m+m'} s^m (1-s)^{m'} \int_{\Delta (m)\times \Delta(m') } \\ e^{- v_0 s D^2} X e^{- v_1 s D^2} \cdots X e^{- v_m s D^2} [A, D^2 + X] e^{- w_0 (1-s) D^2} X e^{-w_1 (1-s) D^2} \cdots X e^{-w_{m'} (1-s) D^2} dv_1 \cdots dv_m dw_1\cdots dw_{m'}.
\end{multline*}
Now if we integrate over $(0,1)$, make a suitable change of variables using
$$
\sum_{j=0}^m s v_j + \sum_{i=0}^{m'} (1-s) w_i = 1,
$$
and apply the Duhamel principle  for $D^2$ we obtain the result.
\end{proof}

It is worth pointing out that the operator $[A, e^{-\B_\sigma^2}]$ is a fiberwise smoothing operator with coefficients in $\Omega^*(B)$. Moreover, for any $s\in (0,1)$ the operator $e^{- s \B_\sigma^2} [A, \B_\sigma^2] e^{-(1-s) \B_\sigma^2}$ is also fiberwise smoothing. It is then straightforward to check, using the dominated convergence theorem,  that the following holds
$$
\tau_\sigma ([A, e^{-\B_\sigma^2}]) = -\int_0^1 \tau_\sigma (e^{- s \B_\sigma^2} [A, \B_\sigma^2] e^{-(1-s) \B_\sigma^2}) ds. 
$$

\begin{lemma}\label{DuhamelDer}
Consider for any $u\in [0,1]$, a superconnection $\A_u$, given by $\A_u:= \B_\sigma + u A$, where $A$ is a (odd for the grading) zero-th order fiberwise pseudodifferential operator with coefficients in positive degree differential forms on $B$. Then we have
$$
\frac{d}{du} e^{-\A_u^2} = -\int_0^1 e^{- s \A_u^2} [A, \A_u] e^{-(1-s) \A_u^2} ds
$$
 in the strong  operator  topology of $\maL(\maE)$. 
\end{lemma}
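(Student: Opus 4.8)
The plan is to reduce the statement to a finite-difference version of Duhamel's formula and then differentiate under the integral sign. I begin with a purely algebraic observation: since $\A_u = \B_\sigma + uA$ with both $\B_\sigma$ and $A$ odd for the total grading, one has $\A_u^2 = \B_\sigma^2 + u\,[\B_\sigma,A] + u^2A^2$ (graded commutator), so that
$$
\frac{d}{du}\A_u^2 = [\B_\sigma,A] + 2u\,A^2 = [A,\A_u].
$$
Moreover $\A_u^2 = D^2 + X_u$ where $X_u = X + u[\B_\sigma,A] + u^2A^2$ is a fiberwise differential operator of order $\le 1$ with coefficients in positive-degree forms on $B$, depending polynomially on $u$; hence, exactly as in the definition of $e^{-u\B_\sigma^2}$ above, $e^{-\A_u^2}$ is given by a \emph{finite} perturbative sum of strong integrals and is a fiberwise smoothing operator with coefficients in $\Omega^*(B)$, uniformly bounded in $\maL(\maE)$ for $u$ in compact subsets of $[0,1]$.

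Next I would establish, for fixed $u_0,u\in[0,1]$, the finite-difference identity
$$
e^{-\A_u^2} - e^{-\A_{u_0}^2} = -\int_0^1 e^{-s\A_{u_0}^2}\bigl(\A_u^2-\A_{u_0}^2\bigr)e^{-(1-s)\A_u^2}\,ds .
$$
This follows by integrating over $s\in[0,1]$ the derivative of $s\mapsto e^{-s\A_{u_0}^2}e^{-(1-s)\A_u^2}$, namely $e^{-s\A_{u_0}^2}(\A_u^2-\A_{u_0}^2)e^{-(1-s)\A_u^2}$; that this $\maL(\maE)$-valued function is strongly $C^1$ is proved just as in Lemma~\ref{Duhamel}, that is, by first using the classical facts for $e^{-tD^2}$ from \cite{BGV,ReedSimon} and then inserting the finite perturbative expansions for $e^{-s\A_{u_0}^2}$ and $e^{-(1-s)\A_u^2}$.

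Now $\A_u^2-\A_{u_0}^2 = (u-u_0)\bigl([\B_\sigma,A] + (u+u_0)A^2\bigr)$, so the division by $u-u_0$ is exact and
$$
\frac{e^{-\A_u^2}-e^{-\A_{u_0}^2}}{u-u_0} = -\int_0^1 e^{-s\A_{u_0}^2}\bigl([\B_\sigma,A]+(u+u_0)A^2\bigr)e^{-(1-s)\A_u^2}\,ds .
$$
As $u\to u_0$, the middle factor tends to $[\B_\sigma,A]+2u_0A^2 = [A,\A_{u_0}]$, and $e^{-(1-s)\A_u^2}\to e^{-(1-s)\A_{u_0}^2}$ strongly while staying uniformly bounded in $\maL(\maE)$; since $[\B_\sigma,A]+(u+u_0)A^2$ has order $\le 1$, its composition on the left with the smoothing operator $e^{-s\A_{u_0}^2}$ has $\maL(\maE)$-norm $O(s^{-1/2})$ near $s=0$ and bounded otherwise, uniformly in $u$ near $u_0$, hence integrable on $[0,1]$. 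Dominated convergence then permits passing the limit inside $\int_0^1 ds$, which gives the asserted formula for $\frac{d}{du}e^{-\A_u^2}$ in the strong operator topology of $\maL(\maE)$.

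The step I expect to be the main obstacle is this last one: securing the uniform-in-$u$ operator bounds (with the integrable $s^{-1/2}$ singularity) that justify dominated convergence, together with the strong-$C^1$ property of the interpolated product $s\mapsto e^{-s\A_{u_0}^2}e^{-(1-s)\A_u^2}$; both rely on controlling the order-one pieces $X_u$ and $[\B_\sigma,A]+(u+u_0)A^2$ sandwiched between heat operators, and can be organised around \cite{BGV} and the finiteness of the perturbative sum. An alternative that avoids the limiting argument altogether is to differentiate the finite Duhamel sum for $e^{-\A_u^2}$ term by term in $u$ and re-sum using the standard subdivision-of-the-simplex identity; this yields the same formula once the $D^2$-level estimates are in place.
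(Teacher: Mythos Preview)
Your argument is correct, but the paper's proof takes a shorter and more algebraic route that bypasses the dominated-convergence step entirely. The paper writes $\A_{u+h}^2 = \A_u^2 + h\,Y_u(h)$ with $Y_u(h) := [A,\A_u] + hA^2$, and then expands $e^{-\A_{u+h}^2}$ perturbatively around $\A_u^2$ (rather than around $D^2$), obtaining
\[
\frac{1}{h}\bigl(e^{-\A_{u+h}^2}-e^{-\A_u^2}\bigr)=\sum_{k\ge 1} h^{k-1}\int_{\Delta(k)} e^{-u_0\A_u^2}\,Y_u(h)\,e^{-u_1\A_u^2}\cdots Y_u(h)\,e^{-u_k\A_u^2}\,du_1\cdots du_k.
\]
Because $Y_u(h)$ has positive form degree and $B$ is finite-dimensional, this sum is \emph{finite}; the right-hand side is therefore a polynomial in $h$, and letting $h\to 0$ simply selects the $k=1$ term with $Y_u(0)=[A,\A_u]$. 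No integrable-singularity estimate or interpolation identity is needed.

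Your approach via the interpolation $s\mapsto e^{-s\A_{u_0}^2}e^{-(1-s)\A_u^2}$ is the classical Duhamel route and works, at the cost of the $O(s^{-1/2})$ bound and the uniform-in-$s$ strong convergence you correctly flag as the main obstacle. The paper's trick trades that analysis for the nilpotency of $Y_u(h)$ in the form degree. Note also that the ``alternative'' you sketch at the end (differentiating the expansion around $D^2$ term by term) is close in spirit to, but still more laborious than, what the paper actually does: expanding around $\A_u^2$ instead of $D^2$ makes the difference quotient a polynomial in $h$ outright.
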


\begin{proof}
We set for any small real number $h\not =0$, $Y_u(h):=[ A, \A_u] + h A^2$. Then from the definition of $e^{-\A_v^2}$ we deduce that 
$$
\frac{1}{h} \left[e^{-\A_{u+h}^2} - e^{-\A_{u}^2} \right] = \sum_{k\geq 1} h^{k-1} \int_{\Delta(k)} e^{-u_0\A_{u}^2} Y_u(h) e^{-u_1\A_{u}^2} \cdots Y_u(h) e^{-u_k\A_{u}^2} du_1\cdots du_k.
$$
Both sides are well defined as operators on $\maE$ as they are fiberwise smoothing operators. Applying both sides to elements of $\maE$ we end the proof by letting $h\to 0$.
\end{proof}

We shall also need the following lemma.
\begin{lemma}\label{CommD}
 For any  $A_0, \cdots, A_n$ in the algebra $\psi^\infty (M|B,E;\Lambda^*B)[\sigma]$ we have,
$$
\tau_\sigma \left([\sigma D, A_0 e^{- u_0 \B_\sigma^2} A_1 e^{-u_1 \B_\sigma^2} \cdots A_n  e^{-u_n \B_\sigma^2}]\right) = 0 \text{ for } u_j>0.
$$
\end{lemma}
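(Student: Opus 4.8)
The plan is to reduce the statement to the fact---already used in the proof that $\tau$ is closed---that the fiberwise trace $\tau$ annihilates commutators of fiberwise smoothing operators with fiberwise first order differential operators, and then to dispose of the extra Clifford variable $\sigma$ by a short bookkeeping argument.

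First I would note that, since all $u_j>0$, each factor $e^{-u_j\B_\sigma^2}$ is a fiberwise smoothing operator (with coefficients in forms on $B$ and in $\sigma$), so the product $W:=A_0 e^{-u_0\B_\sigma^2}A_1e^{-u_1\B_\sigma^2}\cdots A_n e^{-u_n\B_\sigma^2}$ lies in $L_{-\infty}[\sigma]=\psi^{-\infty}(M|B,E;\Lambda^*B)[\sigma]$; moreover $DW$ and $WD$ are fiberwise smoothing as well, $D$ being fiberwise first order, so $[\sigma D,W]\in L_{-\infty}[\sigma]$ and $\tau_\sigma([\sigma D,W])$ makes sense. Because $\sigma$ is graded-central and $D$ has base-form degree $0$, one checks that $[\sigma D,W]=\sigma[D,W]$, where now $[D,W]=DW-WD$ denotes the ordinary operator commutator. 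Decomposing $W=W_0+\sigma W_1$ with $W_0,W_1$ free of $\sigma$ (hence themselves fiberwise smoothing with coefficients in forms on $B$) and using the definition $\tau_\sigma(T+\sigma S)=\tau(S)$, this gives
$$
\tau_\sigma([\sigma D,W])=\tau([D,W_0]),
$$
so it remains to prove that $\tau([D,K])=0$ for every $K\in\psi^{-\infty}(M|B,E;\Lambda^*B)$.

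By the $\Omega^*(B)$-linearity of $D$ and of $\tau$ I may take $K\in\psi^{-\infty}(M|B,E)$, and by a partition of unity I may assume the smooth Schwartz kernel $k=k_K$ of $K$ is supported in a trivializing chart $V\cong\R^p\times\R^q$ over $U\subset B$, in which $D=\sum_{j=1}^p c_j\,\partial_{x_j}+c_0$ with $c_0,\dots,c_p$ smooth endomorphism-valued matrices. The zeroth order term is immediate: the restriction to the diagonal of the kernel of $[c_0,K]$ is the matrix commutator $c_0(x)k(x,x)-k(x,x)c_0(x)$, whose pointwise trace vanishes. For each term $c_j\,\partial_{x_j}$, computing the kernel of $Kc_j\,\partial_{x_j}$ by an integration by parts along the fiber and subtracting, one finds (using cyclicity of the matrix trace) that the pointwise trace of the diagonal of the kernel of $[c_j\,\partial_{x_j},K]$ equals $\partial_{x_j}\!\left(\tr\big(c_j(x)k(x,x)\big)\right)$, a total fiber derivative of a compactly supported function, so its integral over the fiber vanishes. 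Summing over the local form of $D$ gives $\tau([D,K])=0$, whence $\tau_\sigma([\sigma D,W])=0$. The only point requiring care is the interplay between the Clifford variable $\sigma$ and the graded trace $\tau_\sigma$ in the reduction above; the kernel computation itself is entirely parallel to the one already carried out for the closedness of $\tau$.
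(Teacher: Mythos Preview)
Your argument is correct, but it proceeds differently from the paper. You unwind the definition of $\tau_\sigma$ and reduce the claim to the concrete fact that $\tau([D,K])=0$ for any fiberwise smoothing $K$, which you then verify by a local Schwartz-kernel computation analogous to the one used in the closedness of $\tau$. The paper instead stays at the level of the abstract graded-trace property of $\tau_\sigma$: it splits the heat factor as $e^{-u_0\B_\sigma^2}=e^{-u_0\B_\sigma^2/2}e^{-u_0\B_\sigma^2/2}$ so that both $\sigma D A_0 e^{-u_0\B_\sigma^2/2}$ and the remaining product are smoothing, cycles once, then cycles back $A_0 e^{-u_0\B_\sigma^2/2}$, and verifies that the two accumulated signs give exactly $\tau_\sigma(\sigma D\,W)=(-1)^{\sum_j|A_j|}\tau_\sigma(W\,\sigma D)$, i.e.\ the graded commutator has vanishing $\tau_\sigma$. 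Your route has the virtue of making explicit why the unbounded $D$ causes no trouble (everything is visible at the kernel level), while the paper's route avoids re-opening the kernel computation and uses only the already-established Lemma~\ref{CommNabla} together with a splitting that keeps all factors in $L_{-\infty}[\sigma]$.
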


\begin{proof}
By definition $e^{-u_j \B_\sigma^2}$ is given by the perturbative sum
$$
e^{-u_j \B_\sigma^2} = \sum_{m\geq 0} (-u_j)^m \int_{\Delta (m)} e^{-u_j v_0 D^2} X e^{-u_j v_1 D^2} \cdots X e^{-u_j v_m D^2} dv_1 \cdots dv_m.
$$
For $u_0>0$, we know that the operator $\sigma D A_0 e^{-u_0\B_\sigma/2}$ is fiberwise smoothing with coefficients in differential forms and that its degree is $|A_0|+1$. Therefore, the graded tracial property of $\tau_\sigma$ shows that
\begin{multline*}
 \tau_\sigma \left [  (D A_0 e^{-u_0\B_\sigma^2/2}) (e^{-u_0\B_\sigma^2/2} A_1 e^{-u_1\B_\sigma^2}\cdots A_n e^{-u_n\B_\sigma^2})\right] = \\ (-1)^{(|A_0|+1)\sum_{j=1}^n |A_j|} \tau_\sigma \left [  (e^{-u_0\B_\sigma^2/2} A_1 e^{-u_1\B_\sigma^2}\cdots A_n e^{-u_n\B_\sigma^2}\sigma D)( A_0 e^{-u_0\B_\sigma^2/2})\right]
\end{multline*}
Now as before, the operator $A_0 e^{-u_0\B_\sigma^2/2}$, as well as $$e^{-u_0\B_\sigma^2/2} A_1 e^{-u_1\B_\sigma^2}\cdots A_n e^{-u_n\B_\sigma^2}\sigma D$$ are fiberwise smoothing operators with coefficients in differential forms. Therefore,
\begin{multline*}
 \tau_\sigma \left [  (e^{-u_0\B_\sigma^2/2} A_1 e^{-u_1\B_\sigma^2}\cdots A_n e^{-u_n\B_\sigma^2}\sigma D)( A_0 e^{-u_0\B_\sigma^2/2})\right] = \\(-1)^{|A_0|(1+\sum_{j=1}^n |A_j|)} \tau_\sigma \left [ ( A_0 e^{-u_0\B_\sigma^2/2}) (e^{-u_0\B_\sigma^2/2} A_1 e^{-u_1\B_\sigma^2}\cdots A_n e^{-u_n\B_\sigma^2}\sigma D)\right].
\end{multline*}
Hence we have
$$
\tau_\sigma \left [\sigma D A_0 e^{-u_0\B_\sigma^2} A_1 e^{-u_1\B_\sigma^2}\cdots A_n e^{-u_n\B_\sigma^2}\right] = (-1)^{\sum_{j=0}^n |A_j|} \tau_\sigma \left [A_0 e^{-u_0\B_\sigma^2} A_1 e^{-u_1\B_\sigma^2}\cdots A_n e^{-u_n\B_\sigma^2}\sigma D \right].
$$
\end{proof}

\section{Multilinear functionals and identities}\label{Multi}

In this Section we record some useful identities satisfied by the multilinear
functionals that enter into the bivariant JLO cocycle. {{We shall extensively use ideas developed in the seminal paper \cite{GetzlerSzenes} by E. Getzler and A. Szenes.}}
Let   $A_i\in \psi^\infty (M|B,E;\Lambda^*B)[\sigma]$ and with $\Delta(n)$ being as before, the $n$-simplex, 
we define multilinear functionals \cite{GetzlerSzenes, Wu97} 
$$
\left<\left<A_0, \cdots, A_n\right>\right>_{\B_\sigma} := \int_{\Delta(n)} \tau_\sigma (A_0 e^{- u_0 \B_\sigma^2} A_1 e^{-u_1 \B_\sigma^2} \cdots A_n  e^{-u_n \B_\sigma^2}) du_1 \cdots du_n \quad \in \Omega^*(B).
$$
and 
$$
< A_0, \cdots, A_n> := \int_{\Delta(n)} \tau_\sigma (A_0 e^{-u_0D^2} \cdots A_n e^{-u_nD^2} ) du_1 \cdots du_n \quad \in \Omega^*(B).
$$
The following lemma is stated in the case of flat connections in \cite{Wu97} and is a straightforward extension of \cite{GetzlerSzenes}[Lemma 2.2]. We give the proof for completeness and because it will be used in the sequel.

\begin{lemma}\label{relations}  Let $A_0, \cdots, A_n \in  \psi^\infty (M|B,E;\Lambda^*B)[\sigma]$ and let $\ep_i=(|A_0| + \cdots + |A_{i-1}|)(|A_i| + \cdots + |A_n|)$.
\begin{itemize}
 \item For $1\leq i \leq n$, \quad 
$
\left<\left<A_0, \cdots, A_n\right>\right> = (-1)^{\ep_i} \left<\left<A_i, \cdots, A_n, A_0, \cdots, A_{i-1}\right>\right>;
$
\item 
$
\sum_{i=0}^n \left<\left<A_0, \cdots, A_i, 1, A_{i+1}, \cdots, A_n \right>\right> = \left<\left<A_0, \cdots, A_n\right>\right>;
$
\item 
$
 \left<\left<[\B_\sigma, A_0], A_1, \cdots, A_n\right>\right>   +   \sum_{i=1}^n (-1)^{|A_0|+\cdots+|A_{i-1}|} \left<\left<A_0, \cdots, [\B_\sigma, A_i], \cdots ,A_n\right>\right>
+ d_B \left<\left<A_0, \cdots, A_n\right>\right> = 0;
$
\item For $0\leq i < n$,
$
       \left<\left<A_0, \cdots, A_{i-1} A_i , A_n\right>\right> - \left<\left<A_0, \cdots, A_{i} A_{i+1} , A_n\right>\right>
= \left<\left<A_0, \cdots, [\B_\sigma^2, A_i] , A_n\right>\right>;
      $
 and for $i=n$, {{
$$
\left<\left<A_0, \cdots, A_{n-1} A_n \right>\right> - (-1)^{(|A_0|+\cdots +|A_{n-1}|)|A_n|} \left<\left<A_n A_0,A_1, \cdots, A_{n-1} \right>\right>\\
= \left<\left<A_0, \cdots, A_{n-1},  [\B_\sigma^2, A_n] \right>\right>.
$$
}}
\item Let $(\B_{\sigma, s} = \B_\sigma + sA)_s$  be a $1$-parameter family of superconnections associated with $\sigma D$ as in Lemma \ref{DuhamelDer}, then we have
$$
\frac{d}{ds} \left<\left<A_0, \cdots, A_n \right>\right>_{\B_{\sigma, s}} + \sum_{i=0}^n \left<\left<A_0, \cdots, A_{i}, [\B_{\sigma, s}, A], A_{i+1}, \cdots,  A_n \right>\right>_{\B_{\sigma, s}} = 0.
$$
\end{itemize}
\end{lemma}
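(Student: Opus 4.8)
The plan is to prove each of the five identities by reducing them to algebraic manipulations of the basic building block $\tau_\sigma(A_0 e^{-u_0\B_\sigma^2}\cdots A_n e^{-u_n\B_\sigma^2})$ together with the properties already established: the graded tracial property of $\tau_\sigma$ (Lemma \ref{CommNabla}), the closedness relation $\tau_\sigma\circ\pa + d_B\circ\tau_\sigma = 0$, the Duhamel principle (Lemma \ref{Duhamel}) and its parametrized version (Lemma \ref{DuhamelDer}), and the vanishing statement of Lemma \ref{CommD}. This is the family/superconnection analogue of \cite{GetzlerSzenes}[Lemma 2.2], so the structure of the argument is known; the work is in carrying the Clifford variable $\sigma$ and the $\Omega^*(B)$-coefficients through correctly.

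For the first identity (cyclicity up to sign), I would use the graded trace property of $\tau_\sigma$ to cyclically permute $A_i e^{-u_i\B_\sigma^2}\cdots A_n e^{-u_n\B_\sigma^2}$ past $A_0 e^{-u_0\B_\sigma^2}\cdots A_{i-1}e^{-u_{i-1}\B_\sigma^2}$, picking up the Koszul sign $(-1)^{\ep_i}$ (the heat factors $e^{-u_j\B_\sigma^2}$ are even, so only the $|A_j|$ contribute), and then relabel the simplex variables $u_0,\dots,u_n$ by the corresponding cyclic rotation, which is measure-preserving on $\Delta(n)$. For the second identity (inserting $1$), the point is that $\sum_{i=0}^n u_i = 1$ on $\Delta(n)$ and that inserting a $1$ in the $i$-th slot amounts to splitting a heat factor $e^{-(u_i'+u_i'')\B_\sigma^2}$; reparametrizing the $(n+1)$-simplex as a union of products of lower simplices, the sum telescopes back to $\left<\left<A_0,\dots,A_n\right>\right>$. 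For the third identity, I would expand $[\B_\sigma,\cdot]$ as $[\nabla,\cdot] + [\sigma D,\cdot]$; the $\sigma D$ part of the sum vanishes termwise by Lemma \ref{CommD} (after using Duhamel to move $[\sigma D, e^{-u_j\B_\sigma^2}]$ around — note $[\sigma D,\B_\sigma^2] = [\sigma D, X]$ is again smoothing), and the $\nabla$ part is the graded Leibniz rule for $\pa$ applied under $\tau_\sigma$, which by Lemma \ref{CommNabla} converts $\tau_\sigma\circ\pa$ on the whole product into $-d_B\circ\tau_\sigma$, i.e. $-d_B\left<\left<A_0,\dots,A_n\right>\right>$.

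For the fourth identity, the key observation is $[\B_\sigma^2, A_i]$ appears because differentiating/telescoping adjacent heat factors produces, via Duhamel (the integrated form of Lemma \ref{Duhamel} displayed right after its proof), exactly $\int e^{-s\B_\sigma^2}[\B_\sigma^2, A_i]e^{-(1-s)\B_\sigma^2}$; combining the two product terms $A_{i-1}A_i$ and $A_iA_{i+1}$ and collapsing the simplex one dimension down gives the stated difference. The $i=n$ case is the same computation but the collapse wraps around cyclically, which is the source of the extra cyclic-permutation sign $(-1)^{(|A_0|+\cdots+|A_{n-1}|)|A_n|}$. For the last identity (variation of the superconnection), I would differentiate $\left<\left<A_0,\dots,A_n\right>\right>_{\B_{\sigma,s}}$ under the integral sign, apply Lemma \ref{DuhamelDer} to each factor $e^{-u_i\B_{\sigma,s}^2}$ to get $-\int e^{-t\B_{\sigma,s}^2}[A,\B_{\sigma,s}]e^{-(1-t)\B_{\sigma,s}^2}$ inserted in the $i$-th heat slot, recognize each such insertion (after reparametrizing the enlarged simplex) as $\left<\left<A_0,\dots,A_i,[\B_{\sigma,s},A],A_{i+1},\dots,A_n\right>\right>_{\B_{\sigma,s}}$, and sum over $i$.

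The main obstacle I anticipate is bookkeeping rather than conceptual: keeping the graded signs consistent throughout, particularly the interplay between the $\Z_2$-grading by form-parity on $B$ and the Clifford degree of $\sigma$, and making sure that the various simplex reparametrizations (splitting, cyclic rotation, dimension collapse) are carried out with the correct Jacobian (all trivial here, since they are piecewise-linear volume-preserving maps) and correct identification of faces. A secondary technical point is justifying differentiation under the integral sign and the termwise application of Duhamel: this is legitimate because, as noted after Lemma \ref{Duhamel} and in the proof of Lemma \ref{DuhamelDer}, all operators involved are fiberwise smoothing with coefficients in the finite-dimensional algebra $\Omega^*(B)$, so the relevant sums are finite and the integrands are continuous in trace norm on compact parameter sets, allowing dominated convergence.
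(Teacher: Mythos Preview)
Your overall plan is correct and matches the paper's approach for items 1, 2, 4, and 5. There is, however, a genuine gap in your treatment of the third identity.

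You propose to split $[\B_\sigma,\cdot] = [\nabla,\cdot] + [\sigma D,\cdot]$ \emph{first} and then argue that the $\sigma D$ part of the sum $\sum_i (-1)^{|A_0|+\cdots+|A_{i-1}|}\langle\langle A_0,\dots,[\sigma D,A_i],\dots,A_n\rangle\rangle$ vanishes on its own, while the $\nabla$ part reassembles via Leibniz into $\tau_\sigma(\pa(\text{product})) = -d_B\langle\langle\cdots\rangle\rangle$. But neither half works separately: the Leibniz expansion of $\tau_\sigma([\sigma D,\text{product}])$ (respectively of $\tau_\sigma([\nabla,\text{product}])$) contains, in addition to the $[\sigma D,A_i]$ (resp.\ $[\nabla,A_i]$) terms you want, also the heat-factor terms $[\sigma D, e^{-u_j\B_\sigma^2}]$ (resp.\ $[\nabla, e^{-u_j\B_\sigma^2}]$). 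These do \emph{not} vanish individually, since $[\sigma D,\B_\sigma^2] = -[\nabla,\B_\sigma^2]$ is in general a nonzero first-order operator with form coefficients. Your parenthetical remark that $[\sigma D,\B_\sigma^2] = [\sigma D,X]$ ``is again smoothing'' is beside the point: smoothing or not, it is nonzero, so Duhamel produces genuine leftover terms that your argument does not account for.

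The fix is the order of operations used in the paper. First apply Duhamel to the \emph{full} superconnection to obtain the Bianchi identity $[\B_\sigma, e^{-u\B_\sigma^2}] = 0$. This makes \emph{all} heat-factor commutators disappear at once, so the graded Leibniz rule gives
\[
\sum_{i=0}^n (-1)^{|A_0|+\cdots+|A_{i-1}|}\langle\langle A_0,\dots,[\B_\sigma,A_i],\dots,A_n\rangle\rangle
= \int_{\Delta(n)} \tau_\sigma\bigl([\B_\sigma, A_0 e^{-u_0\B_\sigma^2}\cdots A_n e^{-u_n\B_\sigma^2}]\bigr)\,du.
\]
Only \emph{now} split $\B_\sigma = \nabla + \sigma D$ inside the global commutator: the $\sigma D$ contribution vanishes directly by Lemma~\ref{CommD}, and the $\nabla$ contribution is $\tau_\sigma\circ\pa = -d_B\circ\tau_\sigma$ by Lemma~\ref{CommNabla}, yielding exactly $-d_B\langle\langle A_0,\dots,A_n\rangle\rangle$. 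The rest of your proposal is fine.
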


\begin{proof}
The first relation is a consequence of the fact that $\tau_\sigma$ is a graded trace. Indeed,  $\B_\sigma^2$ is homogeneous {{ of even total degree}}. The second relation is also clear, see \cite{GetzlerSzenes} for more details. Let us check the third relation. From the perturbative finite sum which defines $e^{-u\B_\sigma^2}$ we have seen in Lemma \ref{Duhamel} that the Duhamel principle holds, hence we obtain the following Bianchi identity
$$
[\B_\sigma, e^{-u\B_\sigma^2}] = -\int_0^1 e^{-u s \B_\sigma^2} [\B_\sigma, \B_\sigma^2] e^{-u(1-s) \B_\sigma^2} ds = 0.
$$ 
Therefore,  the left hand side of the third relation coincides with
$$
\int_{\Delta(n)} \tau_\sigma \left([\B_\sigma, A_0 e^{- u_0 \B_\sigma^2} A_1 e^{-u_1 \B_\sigma^2} \cdots A_n  e^{-u_n \B_\sigma^2}]  \right) du_1 \cdots du_n + d_B \left<\left<A_0, \cdots, A_n\right>\right>.
$$
On the other hand, by Lemma \ref{CommD}
$$
\tau_\sigma ([\sigma D , A_0 e^{- u_0 \B_\sigma^2} A_1 e^{-u_1 \B_\sigma^2} \cdots A_n  e^{-u_n \B_\sigma^2}]) = 0.
$$
Hence, the left hand side of the third relation coincides with
$$
\int_{\Delta(n)} \left[ \tau_\sigma \left([\nabla, A_0 e^{- u_0 \B_\sigma^2} A_1 e^{-u_1 \B_\sigma^2} \cdots A_n  e^{-u_n \B_\sigma^2}]  \right) + d_B \left( \tau_\sigma (A_0 e^{- u_0 \B_\sigma^2} A_1 e^{-u_1 \B_\sigma^2} \cdots A_n  e^{-u_n \B_\sigma^2})\right) \right] du_1 \cdots du_n.
$$
Lemma \ref{CommNabla} then completes the proof of the third item.

The fourth relation is again a consequence of Lemma \ref{Duhamel} and is a straightforward generalization of the similar relation proved for a single operator in \cite{GetzlerSzenes}.

Now, notice that
$$
\frac{d}{ds} \left<\left<A_0, \cdots, A_n \right>\right>_{\B_{\sigma, s}} = \sum_{i=0}^n \int_{\Delta(n)} \tau_\sigma ( A_0 e^{-u_0\B_{\sigma, s}^2} \cdots A_i \frac{d e^{-u_i\B_{\sigma, s}^2}}{ds} A_{i+1} e^{-u_{i+1}\B_{\sigma, s}^2} \cdots A_{n} e^{-u_{n}\B_{\sigma, s}^2}) du_1 \cdots du_n. 
$$
But, Duhamel's formula \ref{DuhamelDer}  shows that
$$
\frac{d e^{-\B^2_{\sigma, s}}}{ds} + \int_0^1 e^{-u \B^2_{\sigma, s}} [\B_{\sigma, s}, A] e^{-(1-u) \B^2_{\sigma, s}} du = 0.
$$
The proof is thus complete.
\end{proof}

\begin{definition}
The bivariant JLO cochain is defined by the sequence $(\psi_n)_n$ given for $(f_0, \cdots, f_n) \in C^\infty(M)^{n+1}$ by the formula
$$
 \psi_n (f_0, \cdots, f_n) := \left< \left< f_0, [\B_\sigma , f_1],  \cdots ,[\B_\sigma , f_n] \right> \right>_{\B_\sigma}.
$$
\end{definition}
One deduces from  Lemma \ref{normS}  that for any $C^1$ function $f$ on the closed manifold $M$, the commutator $[\B_\sigma , f]$ is a bounded operator with values in horizontal $1$-forms.  For simplicity, we  work with smooth functions and smooth forms, although the constructions work obviously with less regularity, and leave it to the interested reader to transpose the statements for more restrictive regularity conditions. 

We recall that $\Omega A$ denotes the universal differential graded algebra of a topological algebra
$A$, that is,
$\Omega^n(A)=A^+\otimes A^{\otimes^n}$ for $n\geq 1$ (where $A^+$ means there is a unit adjoined to $A$). Also recall the operators $b, B$ on $\Omega A$ defined
by:
$$b(a_0da_1\cdots da_n)=\sum_{j=0}^{n-1}(-1)^ja_0da_1\cdots d(a_ja_{j+1})
da_{j+2}\cdots da_n + (-1)^na_na_0da_1\cdots da_{n-1},$$
$$B(\langle a_0\rangle da_1da_2\cdots da_n)=
\sum_{j=0}^n (-1)^{nj} da_j\cdots da_n d\langle a_0\rangle da_1\cdots da_{j-1}.$$ 
respectively. Here  $\langle a_0\rangle $ means either $a_0$ or $1$, where ${1}$ denotes the additional unit. {{Notice that the letter $B$ is already used for the base manifold $B$ but this should not cause any confusion.}}

\begin{lemma}\label{AlgCocycle}
The sequence $\psi=(\psi_n)_{n\geq 0}$ is a chain map of odd degree. More precisely, it is  a chain map from the universal differential algebra of $C^{\infty} (M)$ to the graded Grassmann algebra $\Omega^* (B)$. Indeed, its components whose form degree have the parity of $n$ are trivial, and it satisfies the cocycle relation 
$$
\psi \circ (b+B) + d_B \circ \psi = 0.
$$
\end{lemma}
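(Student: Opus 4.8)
The plan is to verify the claimed identity $\psi\circ(b+B) + d_B\circ\psi = 0$ by plugging the explicit formula $\psi_n(f_0,\dots,f_n) = \langle\langle f_0, [\B_\sigma,f_1],\dots,[\B_\sigma,f_n]\rangle\rangle_{\B_\sigma}$ into each of $b$ and $B$ and matching terms using the algebraic relations collected in Lemma \ref{relations}. This is the families analogue of the standard computation (for a single Dirac operator) that the JLO cochain is an entire cocycle for $b+B$; I would follow the treatment in \cite{GetzlerSzenes} adapted to our setting, the new feature being that the extra term $d_B\circ\psi$ appears precisely to absorb the base-differential contribution coming from the third relation of Lemma \ref{relations}.

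First I would record the parity statement: since $f_0$ has total degree $0$ and each $[\B_\sigma,f_j]$ has total degree $1$ (one unit from $\B_\sigma$), the operator $f_0 e^{-u_0\B_\sigma^2}[\B_\sigma,f_1]\cdots[\B_\sigma,f_n]e^{-u_n\B_\sigma^2}$ has total degree $n$; applying $\tau_\sigma$, which kills everything except the coefficient of $\sigma$ and lands in $\Omega^*(B)$, shifts parity by one, so $\psi_n$ takes values in forms of degree $\equiv n+1 \pmod 2$, i.e. the component of $\psi_n$ of parity $n$ vanishes. Hence $\psi = (\psi_{2n+1})_{n\ge 0}$ is genuinely supported in odd cochain degree with even-form values, consistent with the bivariant JLO cocycle being ``odd''. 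Next I would expand $\psi_{n-1}(b(f_0 df_1\cdots df_n))$: the face terms $d(f_jf_{j+1})$ produce, after the Leibniz rule $[\B_\sigma,f_jf_{j+1}] = [\B_\sigma,f_j]f_{j+1} + f_j[\B_\sigma,f_{j+1}]$, telescoping sums of the form $\langle\langle \dots, A_{i-1}A_i,\dots\rangle\rangle - \langle\langle\dots,A_iA_{i+1},\dots\rangle\rangle$ together with the cyclic ``wrap-around'' term; by the fourth relation of Lemma \ref{relations} each such difference equals $\langle\langle\dots,[\B_\sigma^2,A_i],\dots\rangle\rangle$. Independently, expanding $\psi_{n+1}(B(f_0 df_1\cdots df_n))$ gives, using the cyclicity (first relation) to align all the cyclic permutations, a sum over insertions; inserting $1=f$ with $df = [\B_\sigma, f]$ and reorganizing via the second relation (the ``insert $1$'' identity) reduces it to terms of the form $\langle\langle\dots,[\B_\sigma,f_0],\dots\rangle\rangle$ where the leading entry has become a commutator.

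The core of the argument is then the third relation of Lemma \ref{relations} applied with $A_0 = f_0$, $A_i = [\B_\sigma,f_i]$: since $[\B_\sigma,[\B_\sigma,f_i]] = [\B_\sigma^2,f_i]$ (the graded Jacobi identity, using $[\B_\sigma,\B_\sigma^2]=0$), the sum $\sum_i (-1)^{\cdots}\langle\langle f_0,\dots,[\B_\sigma^2,f_i],\dots\rangle\rangle + \langle\langle[\B_\sigma,f_0],[\B_\sigma,f_1],\dots\rangle\rangle + d_B\langle\langle f_0,\dots\rangle\rangle = 0$. Comparing this with the outputs of the $b$-computation and the $B$-computation shows that the $[\B_\sigma^2,\cdot]$ terms coming from $b$ cancel against the $[\B_\sigma^2,\cdot]$ terms produced by the third relation, the $[\B_\sigma,f_0]$-leading terms match what $B$ produces, and the leftover is exactly $-d_B\psi_n(f_0,\dots,f_n)$, giving $(\psi\circ(b+B))(f_0df_1\cdots df_n) = -d_B\psi(f_0 df_1\cdots df_n)$ as required. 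I expect the main obstacle to be pure bookkeeping: getting all the Koszul signs $\ep_i$ and the alternating signs in $b$ and $B$ to line up across the three separate expansions, especially the boundary/wrap-around terms in $b$ and the coincidence of the many cyclic summands in $B$ after applying relation one. A secondary subtlety is keeping track of the extra $\sigma$-grading (so that ``total degree'' includes the Clifford variable) and the fact that $d_B$ and $\tau_\sigma$ anticommute rather than commute (Lemma \ref{CommNabla}); once these are handled uniformly, the cancellation is forced by Lemma \ref{relations} exactly as in the single-operator case of \cite{GetzlerSzenes}.
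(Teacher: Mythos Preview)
Your proposal is correct and follows essentially the same route as the paper's own proof: apply the third relation of Lemma \ref{relations} with $A_0=f_0$, $A_i=[\B_\sigma,f_i]$ to produce the $d_B$ term and the $[\B_\sigma^2,f_i]$ terms, identify the $b$-contribution with the $[\B_\sigma^2,f_i]$ sum via the fourth relation, and identify the $B$-contribution with $\langle\langle[\B_\sigma,f_0],\dots,[\B_\sigma,f_n]\rangle\rangle$ via the first and second relations. Your added discussion of the parity statement and the sign bookkeeping is a helpful elaboration but does not change the strategy.
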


\begin{remark}
The above lemma is stated under the assumption that the fibers are odd dimensional. When these fibers are even dimensional an analogous result holds.
\end{remark}

\begin{proof}
The third relation in Lemma \ref{relations} applied to $A_0=f_0$ and $A_i=[\B_\sigma, f_i]$ for $i\geq 1$ gives
\begin{multline*}
\left<\left<[\B_\sigma, f_0], \cdots, [\B_\sigma, f_n]\right>\right> - \\ \sum_{i=1}^n (-1)^i \left<\left<f_0, [\B_\sigma, f_1], \cdots, [\B_\sigma^2, f_i], \cdots , [\B_\sigma, f_n]\right>\right>  + \\
d_B \left<\left<f_0, [\B_\sigma, f_1], \cdots, [\B_\sigma, f_n] \right>\right> = 0.
\end{multline*}
On the other hand computing $(B\psi_{n+1}) (f_0, \cdots, f_n)$, we find, using the second item of Lemma \ref{relations}:
$$
(B\psi_{n+1}) (f_0, \cdots, f_n) = \left<\left<[\B_\sigma, f_0], \cdots, [\B_\sigma, f_n]\right>\right>.
$$
Using the last item of Lemma \ref{relations}, we finally deduce that
$$
(b\psi_{n-1})(f_0, \cdots, f_n)  + \sum_{i=1}^n (-1)^i \left<\left<f_0, [\B_\sigma, f_1], \cdots, [\B_\sigma^2, f_i], \cdots , [\B_\sigma, f_n]\right>\right> = 0.
$$
Therefore, we obtain the desired result
$
B\psi_{n+1} + b\psi_{n-1} + d_B \psi_n = 0.
$
\end{proof}
%
\section{The bivariant JLO cocycle is entire}

We are using cyclic homology for bornological algebras due to R. Meyer \cite{MeyerThesis} and will need some preliminaries which we now describe.

\subsection{\label{borno}Review of bivariant entire cyclic homology}

For the convenience of the reader, we summarise in this subsection what we need about bivariant entire homology. The reader is encouraged to consult \cite{MeyerThesis} for more details, especially for the definitions and properties of bornologies. See also \cite{Bourbaki} for the basic (non-trivial) concepts of bornological functional analysis.
Our task here is to adapt this formalism to the families situation.

The idea of using a bornology in the study of entire cyclic cohomology is due to
Connes \cite{ConnesBook} (see pages 370-371). Given a locally convex topological algebra ${A}$, it is proposed there to use the bounded subsets on 
${ A}$ to define entire cyclic cohomology. Meyer develops this idea using bornological functional analysis in a form that is appropriate for this paper in \cite{MeyerThesis}.

Here  $A$, $A_1$, $A_2$ etc will denote complete locally convex topological algebras.  We will use a family of different  bornologies on $A_j$ denoted
generically by
 ${\mathfrak S}(A_j)$. An algebra $A$ equipped with a particular bornology will
be denoted $(A, {\mathfrak S}(A))$.
Denote by $\Omega A$ the universal differential graded algebra of $A$.
Following \cite{MeyerThesis}, Section 3, we introduce the following notions.
\begin{definition}
(i) For $S\in {\mathfrak S}(A)$ $(dS)^\infty$ denotes the union over $n$ of elements
$ds_1 ds_2\ldots  ds_n \in \Omega^{n}(A)$ where $s_1,\ldots ,s_n$ are from
$S$.
Let,  as before, $\langle S\rangle = S\cup \{1\}$ where ${1}$ denotes an additional unit and not the identity of $A$ (we use $A^+$ to denote the adjunction of this unit to $A$)
and then define
$$\langle S\rangle(dS)^\infty= S (dS)^\infty \cup (dS)^\infty \cup S\subset \Omega A,$$
$$S(dS)^{ev} = \langle S\rangle(dS)^\infty\cap \Omega^{ev}A,$$
$$\langle S\rangle(dS)^{odd}= \langle S\rangle (dS)^\infty\cap \Omega^{odd}A.$$
(ii) The notation $\langle a_0\rangle da_1\ldots da_n\in \Omega^nA$ means either $a_0da_1\ldots da_n$
or $da_1\ldots da_n$ depending on context.\\
(iii) ${\mathfrak S}_{an} $ is the bornology on $\Omega A$ generated by 
$\langle S\rangle (dS)^\infty$ for all $S\in {\mathfrak S}(A)$ and $\Omega_{an} A $ denotes the completion of 
$\Omega A$ in the bornology ${\mathfrak S}_{an}$.
Equivalently  ${\mathfrak S}_{an} $ is generated by the union
over $n$ 
of the sets 
$$\{\langle s_0\rangle ds_1\ ds_2  \dots ds_n\vert s_j\in S,
S\in {\mathfrak S}(A)\}.$$
\end{definition}
\begin{remark}
  If $A$ is Fr\'echet, then $A$ is already complete in the bornology given by taking the bounded sets in the Fr\'echet topology, see \cite{Perrot} for instance.
\end{remark}

 In this paper ${ A}$ will always be one of the Fr\'echet algebras  $C^\infty(M)$ or $C^\infty(B)$
where $F\to M\to B$ is a fibration of compact smooth manifolds.
However they will be equipped with bornologies defined by the subsets bounded in 
 certain families of norms.

 If $(V, {\mathfrak S}(V))$ and $(W, {\mathfrak S}(W))$ are complete locally convex bornological spaces then
bounded  linear maps $\ell:V\to W$ are linear maps with the property
that $\ell(S)\in {\mathfrak S}(W)$ whenever $S\in {\mathfrak S}(V)$.
This notion extends to multilinear maps as well.
Moreover
bounded linear maps
$\ell: \Omega_{an}{A}\to W$ 
are in bijection with bounded linear maps on $\Omega{ A}$ equipped with 
the bornology ${\mathfrak S}_{an}$. These in turn are in bijection with
linear maps $\ell:\Omega{ A}\to W$ satisfying 
$\ell(\langle S\rangle(dS)^\infty)\in {\mathfrak S}(W)$, for any $S\in {\mathfrak S}(V)$.

We now explain some key results. We denote by $n!{\mathfrak S}_{an}$
the bornology on $\Omega A$ 
generated by the union over $n$ and $S\in  {\mathfrak S}(A)$ of the sets  
$ n!\langle S\rangle\langle dS\rangle (dS)^{2n}$
which are defined to be
$$\{n!\langle s_0\rangle ds_1\ ds_2  \dots ds_{2n},\vert s_j\in S
\}\cup
\{n!\langle s_0\rangle ds_1\ ds_2  \dots ds_{2n+1}\vert s_j\in S
\}.$$
Let $C({A})$ be the algebra $\Omega A$ completed in the bornology
$n!{\mathfrak S}_{an}$. 
If we equip $\Omega(A)$ with
 the Hochschild boundary $b$ and then with Connes' operator $B$ satisfying the usual relations 
 $b^2=0=B^2=Bb+bB$ then we define a bicomplex. The pair $(b,B)$ extend  to bounded maps on $C(A)$ and $(C(A), b+B)$ is a ${\mathbb Z}_2$-graded complex of complete bornological vector spaces called Connes' entire complex. 
 
 {{An important fact is that   Meyer shows that his analytic cyclic cohomology of $A$
   is the same as Connes' entire cyclic cohomology of $A$. }}
 The idea of the proof is to consider the dual complex $ C(A)'$ of 
bounded linear maps $C(A) \to \mathbb C$. These are just bounded  linear maps $(\Omega A,n!{\mathfrak S}_{an})\to\mathbb C$.  The bounded linear functionals on $(\Omega A,n!{\mathfrak S}_{an})$ are those linear 
maps $\Omega A\to \mathbb C$ that remain bounded on all sets of the form $n!\langle S\rangle\langle dS\rangle (dS)^{2n}$. Identifying 
$\Omega A\cong \sum_{n=0}^\infty\Omega^nA$ and
$\Omega^nA \cong A^+\hat{\otimes}A^{\hat{\otimes}^n}$,
$ C(A)'$ becomes the space of families
$(\phi_n)_{n\in{\mathbb Z}_+}$ of $n+1$-linear maps 
$\phi_n:A^+\times A^n\to\mathbb C$ satisfying the
entire growth condition
$$
\vert \phi_n(\langle a_0\rangle,a_1,\ldots,a_n)\vert
\leq const(S )/[n/2]!
$$  
for all $\langle a_0\rangle \in
\langle S\rangle,a_1  \ldots ,a_n\in S$
and for all $S\in {\mathfrak S}(A)$. Here $[n/2] := k$
 if $n = 2k$ or $n = 2k + 1$ and $const(S )$ is a constant depending 
on $S$ but not on $n$. The boundary on $C(A)$ is composition with $B+b$. 
This motivates us to use the bornological approach of Meyer in the context of Connes $(b,B)$ bicomplex.

 The point of view of \cite{MeyerThesis}
is  to define the bivariant cyclic cohomology of a pair $A_1,A_2$ to be the homology of the complex of bounded linear maps from $\Omega_{an}(A_1)$ to $\Omega_{an}(A_2)$.
In this paper  we replace the analytic bornology {{of $C^\infty (M)$}} by the equivalent entire bornology but 
do not work with the universal graded algebra $\Omega(C^\infty(B))$ {{and}} instead consider the smooth exterior algebra $\Omega^*(B)$ (that is smooth sections of the exterior bundle) associated with the smooth manifold $B$.
 We will equip this smooth algebra with various bornologies which we give in the next subsection. The reason for doing this is that we wish to work with superconnections.

\subsection{{Statement of the main theorem}}

{{The techniques used here are inspired by \cite{BH-JDG}}}. We denote by $C^\ell (M)$, for any $\ell\geq 0$,  the algebra of complex valued functions on the smooth manifold $M$ which are of class $C^\ell$. 
{{The algebra $C^\ell (M)$ can be endowed with a Banach space topology as usual. This is achieved for instance by using local coordinates and a partition 
of unity subordinated with a (finite) open cover.  Using local orthonormal frames extended to vector fields over $M$ using this partition, we can define this 
topology using a finite set ${\mathcal X}$ of vector fields over $M$.}} More precisely, the semi-norms 
$$
p_{q} (f):= {{\sup_{X_j\in {\mathcal X}} }}\|X_1\circ \cdots \circ X_q(f)\|_\infty, \quad 0\leq q \leq \ell,
$$
induce a Banach space topology on $C^\ell (M)$.  {{For simplicity, we have omitted the finite set  ${\mathcal X}$ from the notation. }} 

We shall denote by $\Sigma_\ell$ the bornology on $C^\ell(M)$, {\bf and also its restriction to $C^\infty (M)$}, which is given by the bounded sets of 
the norm $\max_{0\leq q \leq \ell} p_q$. 
{{We also introduce for any vector field $Y$ on $B$,  the notation $d_{Y}$ to denote the operator $i_{Y}\circ d_B$.   
The bornological algebra $(\Omega^*(B), \Sigma_\ell)$ of {{smooth}} differential forms on $B$ is endowed similarly 
with the bornology given by the bounded sets of the usual $C^\ell$ topology on forms.  Recall that this latter is associated with the semi-norms obtained on 
$\Omega^k(B)$ by using  as for $C^\ell (M)$ a finite set ${\mathcal Y}$ of vector fields on $B$ and by considering semi-norms }}
$$
p_r (\omega) := {{\sup_{Y_j \in {\mathcal Y}, \|Z_j\|\leq 1} \frac{1}{2^{kr}}\|  (d_{Y_1} \circ \cdots  d_{Y_r}(i_{Z_1}\circ \cdots \circ i_{Z_k} \omega))\|_\infty, \quad 0\leq r \leq \ell.}}
$$
Our goal  is to prove that the bivariant JLO cochain constructed in the formal spirit of Quillen's seminal paper \cite{Quillen88}, is a 
bounded cyclic cocycle from the entire completion of the universal differential algebra associated with the underlying bornological algebra 
$(C^\infty (M), \Sigma_{\ell+1})$ on the one hand and the bornological algebra $(\Omega^*(B), \Sigma_\ell)$ of {{ smooth}} differential forms on $B$ 
endowed with the $\Sigma_\ell$ bornology on the other hand. Again, we  only consider smooth forms and the restriction of $\Sigma_{\ell}$ to them. As a corollary we shall obtain an entire bivariant cyclic cocycle, following Connes \cite{ConnesBook}.  
To shorten the statements of our results we need some further notation.
Write $n!{\mathfrak S}^{\ell+1}_{an}$ for the entire bornology on $\Omega C^\infty(M)$
 arising from the $\Sigma_{\ell+1}$ bornology on $C^\infty(M)$.
{{
\begin{definition}\label{entiremorph}
\begin{itemize}
\item A morphism $\varphi$ from $\Omega C^\infty(M)$ to $\Omega^*(B)$
is an $\ell$-entire bivariant cochain if it is bounded when $\Omega C^\infty(M)$  is endowed with the entire bornology
$n!{\mathfrak S}^{\ell+1}_{an}$ and $\Omega^*(B)$ with the bornology $ \Sigma_\ell$.
An $\ell$-entire bivariant cocycle is an $\ell$-entire bivariant cochain which satisfies
$\varphi\circ(b+B)+d_B\circ\varphi =0$. 
\item A morphism $\varphi$ from $\Omega C^\infty(M)$ to $\Omega^*(B)$
is called an entire bivariant cochain here if it is bounded when $\Omega C^\infty(M)$  is endowed with the entire bornology
$n!{\mathfrak S}^{\infty}_{an}$ and $\Omega^*(B)$ with the bornology $ \Sigma_\infty$.
\end{itemize}
\end{definition}
}}

{{\begin{remark}
A morphism $\varphi$  is a bivariant entire cochain if and only if it is a bivariant $\ell$-entire cochain, for all $\ell\geq 0$.
\end{remark}
}}

We are now ready to state our first theorem. Recall that the fibers of our fibration are odd dimensional. There is a similar statement in the even case.
\begin{theorem}\label{JLO} For any integer $\ell\geq 0$, the bivariant JLO cochain
$\psi$ is an $\ell$-entire bivariant cocycle. 
\end{theorem}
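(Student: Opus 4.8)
The plan is to verify the two defining properties of an $\ell$-entire bivariant cocycle separately: first, the cocycle identity $\psi\circ(b+B)+d_B\circ\psi=0$, which is already established as Lemma \ref{AlgCocycle}; second, and this is the substantive part, the boundedness statement, namely that $\psi$ sends the entire bornology $n!{\mathfrak S}^{\ell+1}_{an}$ on $\Omega C^\infty(M)$ into the bornology $\Sigma_\ell$ on $\Omega^*(B)$. Concretely, unravelling Definition \ref{entiremorph} and the description of $n!{\mathfrak S}_{an}$ recalled in Subsection \ref{borno}, this reduces to producing, for every bounded set $S\in\Sigma_{\ell+1}$ of $C^\infty(M)$, a constant $\mathrm{const}(S)$ independent of $n$ such that
$$
p_r\bigl(\psi_{2n+1}(f_0,\dots,f_{2n+1})\bigr)\le \frac{\mathrm{const}(S)}{n!}\qquad\text{for all }0\le r\le\ell
$$
whenever $f_0\in\langle S\rangle$ and $f_1,\dots,f_{2n+1}\in S$, where $p_r$ is the $C^\ell$ seminorm on $\Omega^*(B)$ introduced above. (The fact that only the odd components $\psi_{2n+1}$ are nonzero is part of Lemma \ref{AlgCocycle}, using that the fibers are odd dimensional.)

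The main work is therefore a chain of estimates on the generalized JLO functional $\langle\langle f_0,[\B_\sigma,f_1],\dots,[\B_\sigma,f_{2n+1}]\rangle\rangle_{\B_\sigma}$. First I would expand $e^{-u_j\B_\sigma^2}$ via the finite perturbative Duhamel sum of Definition/Lemma \ref{Duhamel} in terms of $e^{-tD^2}$ and the first-order operator $X=\nabla^2-\sigma[\nabla,D]$; since $B$ is finite dimensional the sum truncates, so we only pick up a fixed combinatorial overhead. Next, applying the vector fields $d_{Y_1}\circ\cdots\circ d_{Y_r}$ and contractions $i_{Z_k}$ that appear in $p_r$ amounts to differentiating the integrand in the base directions; each such derivative either hits an $f_j$, hits a heat factor (producing, by another Duhamel expansion, an extra insertion of a first-order operator and an extra power of $u_j$, controlled because $u_j^{1/2}e^{-u_jD^2/2}$ is bounded), or hits $X$ (again a bounded-coefficient first-order operator). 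The $C^{\ell+1}$ control on the $f_j$'s — as opposed to merely $C^\ell$ — is exactly what is needed because $[\B_\sigma,f_j]$ already costs one derivative of $f_j$, and a further base-derivative may cost one more. Then one invokes the standard heat-kernel bound: for $u_0+\cdots+u_{2n+1}=1$ on the simplex $\Delta(2n+1)$, a product $P_0e^{-u_0D^2}P_1e^{-u_1D^2}\cdots$ of bounded operators interspersed with heat operators is trace-class with trace norm bounded by $\prod\|P_j\|$ times a fixed constant, uniformly in the $u_j$; taking the fiberwise graded trace $\tau_\sigma$ and integrating over $\Delta(2n+1)$, whose volume is $1/(2n+1)!$, yields the crucial $1/(2n+1)!\le 1/n!$ factorial decay. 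Finally the operator norms $\|[\B_\sigma,f_j]\|$ — and the norms of their base-derivatives — are each bounded by a fixed multiple of the $C^{\ell+1}$ norm of $f_j$, hence by a constant depending only on $S$; this is essentially Lemma \ref{normS} referred to after the definition of $\psi_n$. Multiplying through gives the desired bound with $\mathrm{const}(S)$ of the form $C^{2n+2}\cdot(\text{fixed})/ (2n+1)!$, which is summable in $n$ and in particular bounded by a single $S$-dependent constant times $1/n!$.

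The hard part will be bookkeeping the derivatives in the base direction uniformly in $n$: each of the (at most $\ell$) base-derivatives in $p_r$ can land on any of the $2n+2$ heat factors or $2n+2$ operator slots, so a naive Leibniz expansion produces $O(n)^\ell$ terms, which would destroy the factorial estimate unless one is careful. The resolution is that $\ell$ is \emph{fixed} and each term still carries the full $1/(2n+1)!$ simplex volume, so $O(n^\ell)/(2n+1)!$ still goes to zero and is bounded uniformly in $n$; one just has to organize the Duhamel re-expansions so that every newly inserted first-order operator $X$ or $[\nabla,D]$ comes paired with a compensating factor $u_j^{1/2}$ absorbed into the adjacent heat semigroup, keeping every factor genuinely bounded. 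A secondary technical point is to make the graded-trace manipulations (Lemma \ref{CommNabla}, Lemma \ref{CommD}) compatible with differentiating under $\tau_\sigma$ and under the simplex integral, which follows from the dominated convergence remark after Lemma \ref{Duhamel} together with the fiberwise smoothing property of all operators involved. Once these estimates are in place, the $\ell$-entire bound holds for every $\ell$, and combined with the cocycle identity from Lemma \ref{AlgCocycle} this establishes the theorem; the corollary that $\psi$ is entire for the Fréchet topologies then follows from the remark that an entire bivariant cochain is precisely one that is $\ell$-entire for all $\ell$.
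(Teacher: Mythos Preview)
Your overall strategy is the same as the paper's: Duhamel-expand $e^{-u_j\B_\sigma^2}$, apply Leibniz for the base-direction derivatives in $p_r$, absorb the resulting positive-order insertions into adjacent heat factors, and harvest the simplex volume for the factorial decay. Two technical points in your sketch are garbled, however, and they are exactly where the real work lies.

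First, when a base derivative hits a heat factor, Duhamel inserts $\pa_Y(D^2)$, which is a \emph{second}-order fiberwise operator, not first order; likewise $\nabla^2$ (first order) and $[\nabla,D]$ (first order) already appear from the perturbative expansion. The absorption mechanism is not ``a compensating factor $u_j^{1/2}$'' but rather the spectral estimate $\|(I+D^2)^{1/2}e^{-\epsilon u D^2}\|\le C/\sqrt{u}$: one writes each positive-order insertion as $(I+D^2)^{\alpha/2}\cdot[\text{bounded}]\cdot(I+D^2)^{\alpha'/2}$ and pushes the half-powers of $(I+D^2)$ onto the neighbouring heat kernels, at the cost of factors $u_j^{-1/2}$ in the simplex integrand. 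The paper isolates this as two dedicated lemmas computing $\int_{\Delta(N)}\prod u_j^{-1/2}\,du$ explicitly, and this is where the $1/N!$ actually comes from once unbounded insertions are present --- the naive simplex volume $1/(2n+1)!$ is no longer the relevant quantity.

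Second, your combinatorial remark that Leibniz produces $O(n^\ell)$ terms is correct but is not the delicate point. What matters is that the total number of positive-order insertions (from $X$, from $\pa_Y(D^2)$, etc.) is bounded by $\dim B+\ell$ \emph{independently of $n$}, because $B$ is finite dimensional and you take at most $\ell$ derivatives. This is what guarantees that the number of $u_j^{-1/2}$ factors stays fixed while $n$ grows, so the simplex integral still decays like $1/N!$. The paper makes this explicit in a ``worst case'' computation where one tracks exactly how many order-one and order-two slots can occur and evaluates the corresponding beta-function integral. Your sketch would go through once you correct the order of $\pa_Y(D^2)$ and replace the vague $u_j^{1/2}$ remark by the $(I+D^2)^{1/2}$-absorption argument just described.
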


\begin{corollary}
The bivariant JLO cochain  is entire with respect to the first variable for the Fr\'echet $C^\infty$-topology of $C^\infty(M)$ and $C^\infty(B)$.
\end{corollary}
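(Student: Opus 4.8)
\emph{The corollary is immediate from Theorem~\ref{JLO}}: by the remark following Definition~\ref{entiremorph} a morphism is entire precisely when it is $\ell$-entire for every $\ell\ge 0$, and a subset of the Fr\'echet algebra $C^\infty(M)$ (resp.\ of $\Omega^*(B)$) is bounded if and only if it is bounded in every $\Sigma_\ell$; so it suffices to prove Theorem~\ref{JLO}, whose plan I now sketch. The cocycle identity $\psi\circ(b+B)+d_B\circ\psi=0$ is exactly Lemma~\ref{AlgCocycle}, so the real content is that $\psi$ is a \emph{bounded} morphism from $(\Omega C^\infty(M),n!{\mathfrak S}^{\ell+1}_{an})$ to $(\Omega^*(B),\Sigma_\ell)$. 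Unwinding the description of the entire bornology recalled in Section~\ref{borno}, this is the estimate: for every $S$ bounded in the $\Sigma_{\ell+1}$-norm of $C^\infty(M)$ there is a constant $c(S)$, \emph{independent of $n$}, with
$$
\|\psi_n(f_0,\dots,f_n)\|_{\Sigma_\ell}\ \le\ \frac{c(S)}{[n/2]!}\qquad\text{for }f_0\in\langle S\rangle,\ f_1,\dots,f_n\in S .
$$
Since $\psi_n$ lands in $\Omega^*(B)$, controlling its $\Sigma_\ell$-norm amounts to bounding the sup-norms of the contracted de~Rham derivatives $d_{Y_1}\!\circ\dots\circ d_{Y_r}$ (with $r\le\ell$) of $\psi_n(f_0,\dots,f_n)$, the contractions $i_{Z_j}$ only shrinking norms.

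\emph{First step: turn base derivatives into transverse commutators.} By Lemma~\ref{CommNabla}, $d_B\circ\tau_\sigma=-\tau_\sigma\circ\pa$ with $\pa=[\nabla,\cdot]$; being a graded derivation, $\pa$ distributes over the product $f_0\,e^{-u_0\B_\sigma^2}\,[\B_\sigma,f_1]\cdots[\B_\sigma,f_n]\,e^{-u_n\B_\sigma^2}$ by the Leibniz rule. When $\pa$ lands on $f_i$ or on $[\B_\sigma,f_i]=[\nabla,f_i]+\sigma[D,f_i]$ it produces a bounded fiberwise operator on $\maE$ whose norm is controlled by a $\Sigma_q$-seminorm of $f_i$, where $q$ counts the derivatives already heaped on that slot; the worst case is all $r\le\ell$ of them piling on a single commutator $[\B_\sigma,f_i]$, which already carries one fiber derivative, and there one needs $\|f_i\|_{\Sigma_{\ell+1}}$ --- this is exactly why the theorem pairs $\Sigma_{\ell+1}$ on $C^\infty(M)$ with $\Sigma_\ell$ on $\Omega^*(B)$. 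When $\pa$ lands on a heat factor $e^{-u_j\B_\sigma^2}$, the Duhamel principle (Lemma~\ref{Duhamel}) applied to $\sigma D$, together with $[\sigma D,\B_\sigma^2]=[\sigma D,X]$ and the Bianchi identity $[\B_\sigma,e^{-u\B_\sigma^2}]=0$, yields $[\nabla,e^{-u_j\B_\sigma^2}]=\int_0^1 e^{-u_js\B_\sigma^2}\,[\sigma D,X]\,e^{-u_j(1-s)\B_\sigma^2}\,ds$, and $[\sigma D,X]$ is again a fiberwise first-order operator with coefficients of \emph{positive} total degree --- structurally the same sort of sandwiched operator as $X$ in the definition of $e^{-u\B_\sigma^2}$. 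Expanding each $e^{-u_j\B_\sigma^2}$ through its finite perturbative series and collecting, the contracted $r$-fold derivative of $\psi_n(f_0,\dots,f_n)$ becomes a sum of $O(n^{p'+\ell})$ terms of the shape
$$
\pm\int_{\Delta(N)}\tau_\sigma\bigl(c_0\,e^{-t_0 D^2}\,c_1\,e^{-t_1 D^2}\cdots c_N\,e^{-t_N D^2}\bigr)\,dt_1\cdots dt_N ,
$$
where $t_0+\dots+t_N=1$ and $(c_0,\dots,c_N)$ consists of the $n+1$ bounded operators $b_j$ (the modified slots, with $\|b_j\|\lesssim\|f_j\|_{\Sigma_{\ell+1}}$) and $N-n$ fiberwise first-order operators interspersed among them, drawn from $\{X,\,[\sigma D,X],\dots\}$ and of geometric ($n$-independent) size. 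The crucial point is that $N-n\le p'+\ell$ uniformly in $n$: the number of $X$-insertions is bounded by $p'=\dim B$ since each carries positive base form degree and the answer lives in $\Omega^{\le p'}(B)$, while the $\pa$-insertions number at most $r\le\ell$.

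\emph{Second step: the trace estimate}, the family, form-valued avatar of the classical Getzler--Szenes/JLO bound. For fixed $b\in B$, pair each first-order $c_i$ with half of an adjacent heat factor, using the uniform (in $b$, by compactness of $B$ and smooth dependence) fiberwise elliptic estimate $\|c_i\,e^{-sD_b^2}\|\lesssim s^{-1/2}$; this leaves total heat time at least $\tfrac12$ among the remaining factors, so that H\"older for Schatten norms over them returns at most $\|e^{-D_b^2/2}\|_1$. The leftover $s^{-1/2}$-singularities are integrable on $\Delta(N)$ --- indeed $\int_{\Delta(N)}\prod_{i\in I}t_i^{-1/2}\,dt=\pi^{|I|/2}/\Gamma(n+1+|I|/2)\le\pi^{(p'+\ell)/2}/n!$ --- and, being only $|I|=N-n\le p'+\ell$ in number, contribute merely a fixed constant. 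Hence each term is $\lesssim\|e^{-D^2/2}\|_1\,C^{p'+\ell}\,\prod_j\|f_j\|_{\Sigma_{\ell+1}}/n!$; summing the $O(n^{p'+\ell})$ terms and using $f_j\in S$ gives $\|\psi_n(f_0,\dots,f_n)\|_{\Sigma_\ell}\le c(S)\,\mathrm{const}(S)^n\,n^{p'+\ell}/n!\le c'(S)/[n/2]!$, the $1/n!$ decay being far more than is asked. Together with Lemma~\ref{AlgCocycle} this proves Theorem~\ref{JLO}, hence the corollary.

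\emph{The main obstacle} I anticipate is making the uniformity bookkeeping of the two steps airtight: showing that the number of transverse insertions --- hence the number of $s^{-1/2}$ singularities and the power of the geometric constants --- stays bounded independently of $n$ (this rests on the positivity of the base form degrees of $X$, of $[\sigma D,X]$, and of all their iterated $[\nabla,\cdot]$-descendants, on $\dim B<\infty$, and on uniform fiberwise elliptic and trace-norm estimates over the compact base), while still squeezing the $1/n!$ out of the innermost simplex and keeping every constant dependent only on $S$. The single-operator core --- Getzler--Szenes type bounds for products of heat operators with interspersed bounded and first-order operators --- is classical; the genuinely new ingredients here are the transverse derivatives $\pa$ that the $C^\ell$-norms on the base force upon us, and the accounting of form degrees that keeps the insertion count finite in the first place.
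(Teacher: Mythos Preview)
Your deduction of the corollary from Theorem~\ref{JLO} is correct and coincides with the paper's two-line argument. Your sketch of Theorem~\ref{JLO} also follows the paper's overall strategy: convert base derivatives to transverse commutators via Lemma~\ref{CommNabla}, distribute by Leibniz, use Duhamel on the heat factors, bound the number of insertions by $\dim B+\ell$, and extract $1/n!$ from the simplex integral.

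There is, however, a genuine gap in your Second Step. You assert that the inserted operators---$X$, $[\sigma D,X]$, and their descendants---are \emph{first-order}, giving integrable $t_i^{-1/2}$ singularities. This is false: one computes $[\sigma D,X]=\sigma(D\nabla^2-\nabla^2 D)-[\nabla,D^2]$, which is a fiberwise \emph{second-order} differential operator (both $D$ and $\nabla^2$ are first-order, and their commutator has no reason to drop order; likewise $[\nabla,D^2]=\pa(D^2)$ is second-order, cf.\ Lemma~\ref{alphaq}). In the paper's ordering---expand the perturbative series first, then apply $\pa_Y$---the same phenomenon appears as the insertion $\pa_Y(D^2)$ when a transverse derivative hits $e^{-uD^2}$. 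For a second-order $c_i$ the naive bound is $\|c_i\,e^{-sD^2}\|\lesssim s^{-1}$, and $\int_{\Delta(N)}\prod_{i\in I}t_i^{-1}\,dt$ diverges, so your estimate breaks down as written.

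The paper's cure is the sandwich technique of Lemmas~\ref{p=N}, \ref{p<N} and Step~III of the proof: each order-two insertion $B_j$ is rewritten as $(I+D^2)^{1/2}\bigl[(I+D^2)^{-1/2}B_j(I+D^2)^{-1/2}\bigr](I+D^2)^{1/2}$, with the bracket bounded by Lemma~\ref{alphaq} and Lemma~\ref{EstimateNabla2}; the two borrowed $(I+D^2)^{1/2}$'s are absorbed into adjacent heat factors, each contributing one $t^{-1/2}$. In the worst case (Step~III) the borrowed half-powers accumulate and must be repaid across a run of zeroth-order slots; this succeeds precisely because there are at most $\dim B+\ell$ higher-order insertions against $\sim n$ zeroth-order ones, so for large $n$ the bookkeeping closes and one still lands on a convergent simplex integral of the beta type. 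This is exactly the ``uniformity bookkeeping'' you flag as the obstacle, but it is not merely a counting issue: without the sandwich your $t^{-1/2}$ estimate is simply wrong at the second-order slots.
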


\begin{proof}
If $S$ is a bounded set for the Fr\'echet $C^\infty$ topology of $C^\infty(M)$, then for any $\ell\geq 0$, $S$ is bounded in the $C^{\ell+1}$ topology. Therefore, if $A$ is a subset of $\langle S\rangle(dS)^\infty$ then applying Theorem \ref{JLO}, its image under the morphism defined by $\psi$ will be contained in some set $\langle S'\rangle(dS')^\infty$ for a bounded set $S'$ in the $C^\ell$ topology of $\Omega^*(B)$. Since this is true for any $\ell \geq 0$, we deduce that $\psi (A)$ is bounded for the $C^\infty$ topology. 
\end{proof}

\begin{corollary}\label{pairing}
For any $U\in GL_N(C^\infty (M))$ and for any $\ell \geq 0$, the following series of differential forms on $B$ converges in the $C^\ell$-topology to a closed differential form whose cohomology class is denoted $<\JLO(D), U>$:
$$
\sum_{k\geq 0} (-1)^k k!  \left<\left< U^{-1}, [\B_\sigma, U] , \cdots, [\B_\sigma, U^{-1}], [\B_\sigma, U]   \right>\right>_{2k+1}.
$$
\end{corollary}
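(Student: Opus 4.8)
The series to be studied is, unwinding the definition $\psi_n(f_0,\ldots,f_n)=\langle\langle f_0,[\B_\sigma,f_1],\ldots,[\B_\sigma,f_n]\rangle\rangle_{\B_\sigma}$, nothing but $\sum_{k\geq 0}(-1)^k k!\,\psi_{2k+1}(U^{-1},U,U^{-1},\ldots,U)$, where $\psi$ has first been extended to matrices over $C^\infty(M)$ by tensoring with the ordinary matrix trace; this extension is again an $\ell$-entire bivariant cocycle (with $N$-dependent constants). Equivalently it is the value of $\psi$ on the odd Chern character $\ch(U)=\sum_{k\geq 0}(-1)^k k!\,(U^{-1}\otimes U\otimes U^{-1}\otimes\cdots\otimes U)\in\bigoplus_k\Omega^{2k+1}C^\infty(M)\otimes M_N(\C)$, which is a cycle for $b+B$ by the classical computation of Connes. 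So the plan has two parts: show the series converges in the $C^\ell$-topology of $\Omega^*(B)$, and show that its sum is $d_B$-closed.

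For convergence --- the main technical point --- I would return to the estimates used in the proof of Theorem \ref{JLO} and isolate from them, for \emph{fixed} operator arguments $A_0,\ldots,A_n\in\psi^\infty(M|B,E;\Lambda^*B)[\sigma]$, a bound of the form
$$
p_r\big(\langle\langle A_0,\ldots,A_n\rangle\rangle_{\B_\sigma}\big)\ \leq\ \frac{C^{n+1}\,\mathrm{poly}(n)}{n!}\,\|A_0\|\prod_{i=1}^n\|A_i\|,\qquad 0\leq r\leq \ell .
$$
The content is to control the $\Omega^*(B)$-valued fiberwise trace of $A_0 e^{-u_0\B_\sigma^2}A_1\cdots A_n e^{-u_n\B_\sigma^2}$, together with the iterated $d_B$-derivatives and interior products that enter the seminorms $p_r$, \emph{uniformly} over the simplex $\Delta(n)$: the singularities produced when some $u_j\to 0$, and the insertions of the first order operator $X=\nabla^2-\sigma[\nabla,D]$ coming from the perturbative expansion of $e^{-u\B_\sigma^2}$, are absorbed using that the fiberwise Dirac operator is finitely summable (the fibers being compact) together with the standard trick of splitting $\Delta(n)$ according to which $u_j$ is largest; the integral over $\Delta(n)$ then contributes the full factor $1/n!$ rather than only $1/[n/2]!$. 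Granting this, one applies it with $A_0=U^{-1}$ and the alternating string $A_i=[\B_\sigma,U^{\pm1}]$, which are bounded operators with $\|[\B_\sigma,U^{\pm1}]\|\leq c\,\|U^{\pm1}\|_{C^1}$ by Lemma \ref{normS}. Writing $M$ for the maximum of $\|U^{-1}\|$ and these commutator norms, one obtains
$$
p_r\Big((-1)^k k!\,\langle\langle U^{-1},[\B_\sigma,U],\ldots,[\B_\sigma,U]\rangle\rangle_{2k+1}\Big)\ \leq\ C^{2k+2}\,\mathrm{poly}(k)\,M^{2k+1}\,\frac{k!}{(2k+1)!},
$$
and since $k!/(2k+1)!\leq 1/(k+1)!$ the right-hand side is the general term of a convergent series. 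Hence the series converges in every $C^\ell$-topology.

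For closedness, Lemma \ref{AlgCocycle} gives $d_B\circ\psi=-\psi\circ(b+B)$; since $\psi$ is bounded, hence continuous for the $\Sigma_\ell$-bornology, and since $\ch(U)$ is $(b+B)$-closed, the $d_B$-differential of the limiting form vanishes. Equivalently, one may telescope the third and last identities of Lemma \ref{relations} applied to $A_0=U^{-1}$, $A_i=[\B_\sigma,U^{\pm1}]$: the $B$-type piece of the $k$-th term cancels the $b$-type piece of the $(k+1)$-st, leaving only $d_B$ of the partial sums, so the $d_B$-primitives of the tails tend to $0$. The sum is therefore a closed even differential form on $B$, and its de Rham class is what we denote $\langle\JLO(D),U\rangle$. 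The main obstacle is precisely the uniform-on-$\Delta(n)$ trace estimate of the second paragraph: Theorem \ref{JLO} is stated with the weaker entire bound $1/[n/2]!$, which does not beat the combinatorial factor $k!$ of the Chern character, so one must genuinely use finite summability of $D$ to upgrade it to $1/n!$.
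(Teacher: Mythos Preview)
Your approach is correct, but it works harder than necessary, and your final paragraph contains a misconception worth clearing up.

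The paper's proof is essentially a one-liner: since $\psi$ is an $\ell$-entire bivariant cocycle (Theorem~\ref{JLO}) and $\ch(U)$ is an entire cycle in Connes' sense, the pairing $\psi(\ch(U))$ is automatically a well-defined closed form in $\Omega^*(B)$. No further estimates are extracted.

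Your claim that the entire bound $p_r(\psi_n(f_0,\ldots,f_n))\leq C(S)/[n/2]!$ ``does not beat the combinatorial factor $k!$'' is the one point where you go astray. The bound holds for \emph{every} bounded set $S$, with $C(S)$ depending on $S$. By multilinearity, for any $\lambda>1$,
\[
p_r\big(\psi_{2k+1}(U^{-1},U,\ldots,U)\big)
=\lambda^{-(2k+2)}\,p_r\big(\psi_{2k+1}(\lambda U^{-1},\lambda U,\ldots,\lambda U)\big)
\leq \frac{C(\lambda S)}{\lambda^{2k+2}\,k!},
\]
so the $k$-th term of the series is bounded by $C(\lambda S)\lambda^{-(2k+2)}$, which is summable. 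This scaling trick is precisely what makes $\ch(U)$ an element of the entire completion $C(C^\infty(M))$, and is the content of the abstract pairing the paper invokes from \cite{ConnesJLO}.

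That said, your direct route also succeeds: the $C'(S)^{N+1}/N!$ bound you want is in fact established inside the proof of Theorem~\ref{JLO} (the Proposition bounding the $\phi^m_{\alpha,n}$) before being weakened to $C(S)/[N/2]!$ via Stirling, and $k!/(2k+1)!\leq 1/(k+1)!$ then gives convergence immediately. So your argument is valid, just more hands-on. Your closedness argument via $d_B\circ\psi=-\psi\circ(b+B)$ and $(b+B)\ch(U)=0$ is the same as the paper's in spirit. One terminological quibble: you invoke ``finite summability of $D$'', but the operative hypothesis throughout is $\theta$-summability (trace-class heat kernel on each fiber); finite summability is neither needed nor what the paper uses.
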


\begin{proof}
This corollary is the precise rephrasing of the following fact. Since the bivariant JLO cochain is entire and closed, it  pairs with Connes' \cite{ConnesJLO} entire cyclic cycles of $C^\infty (M)$ to yield a closed differential form on $B$, and direct inspection of the pairing of \cite{ConnesJLO} gives precisely the one in the statement of the corollary.
\end{proof}

Recall that a sequence $(\phi_n)_{n\geq 0}$ of cochains $\phi_n: C^\infty (M)^{\otimes_{n+1}}\to \C$ is entire in the sense of Connes' definition \cite{ConnesBook} for the $C^s$ norm $\|\cdot\|_{s}$ if and only if  for any bounded set $S$ in $(C^\infty (M), \|\cdot\|_s)$ there exists a constant $C(S)$ such that
$$
|\phi_n(f_, \cdots, f_n)| \leq C(S)/[n/2]!, \quad \text{ for any } f_i\in S \text{ and any }n\geq 0.
$$
This allows us to define in the same way entire cocycles for the Fr\'echet topology.
Another consequence of Theorem \ref{JLO}  is the following:

\begin{corollary}
Let $C$ be a closed de Rham current on the base manifold $B$ of degree $N\in \{0, \cdots, \dim (B)\}$. Then the following sequence 
$$
\psi^C =(\int_C \psi_n)_{n-N \in 2\Z+1},
$$
is an entire cyclic cocycle on the algebra $C^\infty (M)$. Moreover, the following series converges in $\C$ to the pairing of the Chern character of $U$ with the composition of $\JLO(D)$ with $C$:
$$
\sum_{k\geq 0} (-1)^k k!  \int_C \left<\left< U^{-1}, [\B_\sigma, U] , \cdots, [\B_\sigma, U^{-1}], [\B_\sigma,  U]   \right>\right>_{2k+1} = \int_C \left< JLO (D), U \right>.
$$
\end{corollary}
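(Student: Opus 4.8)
The plan is to realize $\psi^C$ as the composition of the bivariant cocycle $\psi$ of Theorem \ref{JLO} with the bounded linear functional ``integration over $C$'', and then to transport across this composition the two properties that are needed: the entire growth estimate and the $(b,B)$-cocycle identity. First I would settle the degree bookkeeping. By Lemma \ref{AlgCocycle} the component of $\psi_n(f_0,\dots,f_n)$ lying in $\Omega^k(B)$ vanishes whenever $k$ has the same parity as $n$; integrating over a de Rham current $C$ of (co)degree $N$ therefore annihilates $\psi_n$ unless $n-N$ is odd, which is exactly the index set $n-N\in 2\Z+1$ appearing in the statement. Thus $\psi^C=(\int_C\psi_n)_{n-N\in 2\Z+1}$ is a well-defined family of $(n+1)$-linear functionals on $C^\infty(M)$, and in fact $\int_C\circ\,\psi=\psi^C$ as functionals on $\Omega C^\infty(M)$.

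Next I would prove the entire bound. A de Rham current is by definition continuous for the Fr\'echet $C^\infty$ topology on forms, so there are an integer $\ell$ and a constant $c$ with $\bigl|\int_C\omega\bigr|\le c\,\|\omega\|_{C^\ell}$ for forms $\omega$ of the relevant degree; in particular $\int_C$ is bounded for the bornology $\Sigma_\ell$ on $\Omega^*(B)$. By Theorem \ref{JLO} applied with this $\ell$, together with the translation of bornological boundedness into a growth condition recalled in Subsection \ref{borno}, for every $\Sigma_{\ell+1}$-bounded subset $S$ of $C^\infty(M)$ there is a constant $C(S)$ with $\|\psi_n(\langle f_0\rangle,f_1,\dots,f_n)\|_{C^\ell}\le C(S)/[n/2]!$ for all $f_i\in S$. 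Composing, $\bigl|\psi^C_n(\langle f_0\rangle,f_1,\dots,f_n)\bigr|\le c\,C(S)/[n/2]!$, which is precisely Connes' entire growth condition for the $C^{\ell+1}$ norm on $C^\infty(M)$ recalled just before the corollary, and a fortiori for the Fr\'echet topology. Then I would check the cocycle identity: Theorem \ref{JLO}, via Definition \ref{entiremorph}, gives $\psi\circ(b+B)+d_B\circ\psi=0$ as maps $\Omega C^\infty(M)\to\Omega^*(B)$; since $b+B$ acts on the source and $\int_C$ on the target, composing with $\int_C$ yields $\psi^C\circ(b+B)+\int_C\circ\,d_B\circ\psi=0$, and $\int_C\circ\,d_B$ vanishes because $C$ is a closed current. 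Hence $\psi^C\circ(b+B)=0$, and combined with the previous estimate this exhibits $\psi^C$ as an entire cyclic cocycle on $C^\infty(M)$.

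Finally, for the pairing formula I would apply the continuous functional $\int_C$ term by term to the identity of Corollary \ref{pairing}, whose left-hand series converges in the $C^\ell$ topology to a closed form representing $\langle\JLO(D),U\rangle$; by continuity and linearity of $\int_C$ one then gets
$$
\sum_{k\geq 0} (-1)^k k!\ \int_C \left<\left< U^{-1}, [\B_\sigma, U] , \cdots, [\B_\sigma, U^{-1}], [\B_\sigma,  U]   \right>\right>_{2k+1} \;=\; \int_C \left< \JLO (D), U \right>,
$$
and it then remains only to identify the left-hand side with the pairing of $\psi^C$ against $\ch(U)$, which is exactly the computation with the explicit form of Connes' odd Chern cycle carried out in the proof of Corollary \ref{pairing}, now composed with $\int_C$.

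The hard part will be the second step: one must be sure that the purely bornological boundedness supplied by Theorem \ref{JLO} genuinely produces the uniform-in-$n$ factorial decay $C(S)/[n/2]!$, and that a general de Rham current — whose order may be an arbitrary finite integer rather than $\ell=0$ — is accommodated by invoking the $\ell$-entire property for a suitable $\ell$ (equivalently, the Fr\'echet-entire property of $\psi$ established after Theorem \ref{JLO}). The rest is routine handling of degrees, signs and convergent series.
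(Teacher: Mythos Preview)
Your proof is correct and follows essentially the same route as the paper: compose the bivariant cocycle $\psi$ with the continuous functional $\int_C$, use closedness of $C$ to kill the $d_B$ term in the cocycle identity, and invoke Theorem \ref{JLO} together with Corollary \ref{pairing} for the entire estimate and the pairing formula. Your treatment is in fact more careful than the paper's, which simply asserts that $C$ yields a $C^\infty$-continuous trace on $\Omega^*(B)$; your observation that a de Rham current has some finite order $\ell$ and that one should therefore invoke the $\ell$-entire property of Theorem \ref{JLO} for that specific $\ell$ is the right way to make this precise.
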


\begin{proof}\ 
Computing $(b+B)\psi^C$ we find
$$
b\psi^C_{n-1} + B\psi^C_{n+1}= <C, b\psi_{n-1} + B \psi_{n+1}> = - <C, d_B \psi_n> = 0.
$$
The last equality is true since $C$ is closed. It remains to check the entire property. Notice that the closed current $C$ defines a cyclic cocycle on $C^\infty (B)$ which yields a $C^\infty$ -continuous graded trace on $\Omega^*(B)$. Composing this trace with the JLO bivariant cocycle, we conclude using Corollary \ref{pairing}. 
\end{proof}

\section{Proof of Theorem \ref{JLO}}

The proof is long and will be split into many subparts. 
\subsection{Estimates}

The proof of Theorem \ref{JLO} rests on establishing some estimates on our bivariant JLO functional. We collect the
preliminary facts in this subsection.
We denote by $d_vf$ the differential of $f$ in the fiberwise direction and by $d_Hf$ the differential of $f$ in the horizontal direction defined by the horizontal distribution $H$. We choose the metric on $M$ so that $H$ and the fiberwise bundle $T_vM$ are orthogonal. Recall that if $c$ is the fiberwise Clifford representation then for $f\in C^\infty (M)$ we have $[D, f] = c(d_vf)$. Note also that
$[\nabla, f] = d_Hf\wedge \cdot$

For a vector field $Y$ on $B$, we denote by ${\tilde Y}$ the horizontal vector field on $M$ satisfying $\pi_*{\tilde Y} = Y$. 
Recall that $\pa$ denotes the graded (with respect to the degree of the forms) commutator associated with the quasi-connection $\nabla$, a (exterior) graded derivation of the algebra $\psi^\infty (M|B, E; \Lambda^*B)$ of fiberwise pseudodifferential operators with coefficients in horizontal differential forms. We denote, for any horizontal (i.e. $H$ valued) vector field $Z$ on $M$, by $\nabla_Z$ the composition $i_Z\circ \nabla$ where $i_Z$ is contraction by $Z$.  As usual, for $P\in \psi^h (M|B, E; \Lambda^kB)$ we also denote by $\pa_{Z} (P)$ the element $[\nabla_Z, P]$ of $\psi^h (M|B, E; \Lambda^kB)$. 

If $P\in \psi^h (M|B, E)$ with $h\leq 0$, then we set 
$$
\|P\| := \sup_{b\in B} \|P_b\|,
$$ 
where the norm $\|P_b\|$ is the operator norm on the $L^2$ sections. In general, if $ P\in \psi^h (M|B, E; \Lambda^kB)$ for $h\leq 0$ and $k\geq 0$ we define the uniform norm of $P$ by the same expression, except that now $\|P_b\|$ is obtained by taking the supremum over  $k$-multivectors $Z\in \Lambda^k(T_bB)$ of norm $\leq 1$, of the operator norms $\|i_Z P_b\|$.
We use the operators $(1+D_b^2)^{s/2}$ to define the Sobolev pre-Hilbert $H^s$ topology on
$C^\infty(M_b, E\vert_{M_b})$. We may extend the definition given above for operators of zeroth or negative order to operators of positive order $\alpha$.  The $H^s$-norm of an operator $A_b$ of order $\alpha$ is defined as the norm of $A_b:H^s\to H^{s+\alpha}$ and these norms are comparable for all $s$. By taking the supremum of over $b\in B$ as above we obtain comparable norms
for different choices of $s$
for operators of order $\alpha$. In the discussion below we will for convenience use these $s$-norms interchangeably without comment.

\begin{lemma}\label{alphaq}
For any $q\geq 0$, 
$$
{{\sup_{Y_1, \cdots , Y_r\in  {\mathcal Y}}}} \|(I+D^2)^{-1/2} [\pa_{\tilde{Y}_1} \cdots \pa _{\tilde{Y}_r}] (D^2)(I+D^2)^{-1/2} \| = \alpha_r(D^2) < +\infty,
$$
and 
$$
{{\sup_{Y_1, \cdots , Y_r\in  {\mathcal Y}}}} \|(I+D^2)^{-1/2} [\pa_{\tilde{Y}_1} \cdots \pa _{\tilde{Y}_r}] (D) \| = \alpha_r(D) < +\infty $$
where the $Y_j$'s are vector fields on $B$. 
\end{lemma}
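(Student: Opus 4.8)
The plan is to reduce the statement to the boundedness of order‑zero fibered pseudodifferential operators, together with the compactness of $B$ and the finiteness of $\mathcal Y$. First I would record that, by the discussion preceding the lemma (the contracted form of the Proposition of Section \ref{Prelim}), each derivation $\partial_{\tilde Y}=[\nabla_{\tilde Y},\cdot]$ preserves both the order and the form degree of a fiberwise operator, i.e.\ it sends $\psi^h(M|B,E;\Lambda^kB)$ into itself. Since $D\in\Psi^1(M|B,E)$ and $D^2\in\Psi^2(M|B,E)$ both have form degree $0$, iterating $r$ times yields
$$
[\partial_{\tilde Y_1}\cdots\partial_{\tilde Y_r}](D)\in\Psi^1(M|B,E),\qquad [\partial_{\tilde Y_1}\cdots\partial_{\tilde Y_r}](D^2)\in\Psi^2(M|B,E),
$$
each depending smoothly on $b\in B$, with no differential‑form coefficients produced.

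Next I would use that $D$ is a smooth family of elliptic essentially self‑adjoint first‑order fiberwise operators, so that $(I+D^2)^{-1/2}$ is a fibered pseudodifferential operator of order $-1$, again smooth in $b$. Composing in the fibered calculus of \cite{AtiyahSinger4}, the operator $(I+D^2)^{-1/2}[\partial_{\tilde Y_1}\cdots\partial_{\tilde Y_r}](D^2)(I+D^2)^{-1/2}$ has order $-1+2-1=0$, and $(I+D^2)^{-1/2}[\partial_{\tilde Y_1}\cdots\partial_{\tilde Y_r}](D)$ has order $-1+1=0$; both are smooth families of order‑zero fiberwise pseudodifferential operators. For each $b$ such an operator restricts to a bounded operator on $L^2(M_b,E\vert_{M_b})$ (equivalently, to a bounded map $H^s\to H^s$ for each $s$, the relevant norms being comparable as noted in the setup), and its operator norm is dominated by finitely many seminorms of its local symbols and of finitely many base derivatives of them, which are continuous functions of $b$. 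Since $B$ is compact and $\mathcal Y$ is finite, the supremum over $b\in B$ and over $Y_1,\dots,Y_r\in\mathcal Y$ is finite; this defines $\alpha_r(D^2)$ and $\alpha_r(D)$.

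The only step needing genuine care is the claim that iterating the $\partial_{\tilde Y}$ neither raises the fiberwise order nor introduces form coefficients. This is a local matter: in a trivialization $\nabla_{\tilde Y}$ splits as a base‑direction derivative, plus a scalar first‑order fiberwise operator (the mixing term of $d^\nu$ paired against $Y$), plus a zeroth‑order term, and commuting each of these with a fiberwise operator of order $h$ gives an operator of order $\le h$ — differentiating a symbol in the base variables leaves its order in the fiber covariables unchanged, and a commutator against a first‑order operator with scalar principal symbol drops the order by one. This is exactly the symbolic computation already carried out in Section \ref{Prelim}; once it is in hand, the remainder is a routine application of the fibered pseudodifferential calculus and of the compactness of $B$.
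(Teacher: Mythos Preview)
Your proposal is correct and follows essentially the same approach as the paper: both argue that the iterated derivations $\partial_{\tilde Y_j}$ preserve the fiberwise order of $D$ (resp.\ $D^2$), so that sandwiching with $(I+D^2)^{-1/2}$ produces a smooth family of zero-th order fiberwise operators, and then conclude by compactness of $B$ and finiteness of $\mathcal Y$. The paper's version is a bit more concrete about the local reduction (partition of unity, local orthonormal frame $\partial_1,\dots,\partial_b$, writing $\nabla_{\tilde Y}$ as $\partial_j+\omega(\partial_j)$), whereas you phrase the same point more conceptually via the symbol computation of Section~\ref{Prelim}; but the argument is the same.
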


\begin{proof} The same proof works for both operators and we only give the proof for $D^2$. 
We first point out that the operator $[\pa_{Y_1} \cdots \pa _{Y_q}] (D^2)$ is a second order fiberwise differential operator, with smooth coefficients. Therefore, the operator 
$$
(I+D^2)^{-1/2} [\pa_{\tilde{Y}_1} \cdots \pa _{\tilde{Y}_q}] (D^2)(I+D^2)^{-1/2},
$$
is a zero-th order fiberwise pseudodifferential operator whose norm is  finite. Moreover, as  $B$ is compact, by a partition of unity argument we may assume that 
we are given a local orthonormal basis $(\pa_1, \cdots, \pa_b)$ of the tangent bundle to $B$ over an open set $U\subset B$. 
Then, we can replace the operators $\pa_{Y_j}$ by operators of the form $\tilde\pa_j:=\pa_j + \omega(\pa_j)$, where $\omega$ is a matrix of differential  $1$-forms. Now, 
 the finite family of operators $(I+D^2)^{-1/2} [\tilde\pa_{j_1} \cdots \tilde\pa _{j_q}] (D^2)(I+D^2)^{-1/2} $, for $1\leq j_1, \cdots, j_q\leq b$, is uniformly 
bounded over $U$. {{Since the vector fields $Y_1, \cdots Y_q$ belong to the finite family ${\mathcal Y}$}}, the proof is thus complete.
\end{proof}

\begin{lemma}\label{EstimateNabla2}
For any $r\geq 0$, 
$$
\sup_{\|Z_1\|\leq 1, \|Z_2\|\leq 1, Y_1, \cdots , Y_r\in {\mathcal Y}} \|(I+D^2)^{-1/2} [\pa_{\tilde{Y}_1} \cdots \pa _{\tilde{Y}_r}] (i_{\tilde{Z}_1\wedge \tilde{Z}_2} \nabla^2) \| = \beta_r (\nabla, D) < +\infty,
$$
where $Z_1, Z_2, Y_1, \cdots , Y_r$ are vector fields on $B$ that we view through their unique horizontal lifts.
\end{lemma}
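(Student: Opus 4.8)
The plan is to follow the same pattern as the proof of Lemma \ref{alphaq}, the only additional step being that one must first remove the form-degree of $\nabla^2$ by means of the contraction $i_{\tilde{Z}_1\wedge\tilde{Z}_2}$, after which the argument becomes identical to the case of $D^2$ treated there.

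First I would recall, from the Lemma establishing that the curvature $\nabla^2$ is a fiberwise first order differential operator with (smooth) coefficients in $2$-forms on $B$, that contracting with the horizontal lift of a $2$-vector on $B$ produces $i_{\tilde{Z}_1\wedge\tilde{Z}_2}\nabla^2$, a fiberwise first order differential operator with scalar smooth coefficients; moreover these coefficients depend bilinearly, hence continuously and with locally uniformly bounded norm, on $Z_1$ and $Z_2$. Next I would invoke the Proposition asserting that the derivation $\pa=[\nabla,\cdot]$ preserves each $\psi^h(M|B,E;\Lambda^*B)$: the very same local computation (writing $\nabla = d^\nu + M_\omega$ and $\nabla_{\tilde{Y}_j} = i_{\tilde{Y}_j}(d^\nu+M_\omega)$, which is itself a first order differential operator) shows that each contracted derivation $\pa_{\tilde{Y}_j}=[\nabla_{\tilde{Y}_j},\cdot]$ likewise preserves the order of fiberwise pseudodifferential operators. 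Therefore $[\pa_{\tilde{Y}_1}\cdots\pa_{\tilde{Y}_r}](i_{\tilde{Z}_1\wedge\tilde{Z}_2}\nabla^2)$ is again a fiberwise first order differential operator, whose smooth coefficients involve at most $r$ derivatives (along the $Y_j$) of the coefficients of $\nabla^2$ and of the local connection form $\omega$. Consequently $(I+D^2)^{-1/2}[\pa_{\tilde{Y}_1}\cdots\pa_{\tilde{Y}_r}](i_{\tilde{Z}_1\wedge\tilde{Z}_2}\nabla^2)$ is a zeroth order fiberwise pseudodifferential operator, so for each $b\in B$ its fiberwise norm is finite and it depends continuously on $b$.

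It then remains to handle the three suprema. The supremum over $Y_1,\dots,Y_r\in\mathcal{Y}$ is over a finite set, and the supremum over $b\in B$ is finite since $B$ is compact and the fiberwise norm is continuous in $b$. For the supremum over $\|Z_1\|\leq 1$, $\|Z_2\|\leq 1$, I would use a partition of unity on the compact manifold $B$ to reduce to a chart $U$ equipped with a local orthonormal frame $(\pa_1,\dots,\pa_{p'})$ of $TB$, write $Z_i=\sum_a c_i^a\pa_a$ with $|c_i^a|\leq 1$, and note that it suffices to bound uniformly over $U$ the finite family of operators $(I+D^2)^{-1/2}[\pa_{\tilde{Y}_1}\cdots\pa_{\tilde{Y}_r}](i_{\tilde{\pa}_a\wedge\tilde{\pa}_c}\nabla^2)$, which holds by the previous paragraph; summing the contributions of the finitely many charts yields the desired global bound $\beta_r(\nabla,D)<+\infty$.

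I expect the only point genuinely requiring care is the verification that each contracted derivation $\pa_{\tilde{Y}_j}$ preserves the order of fiberwise pseudodifferential operators (a minor variant of the Proposition, carried out in local coordinates); everything else is the routine compactness-and-finiteness bookkeeping already employed in Lemma \ref{alphaq}.
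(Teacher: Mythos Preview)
Your proposal is correct and follows essentially the same approach as the paper's proof, which is very brief (three sentences): reduce to local coordinates by compactness of $B$, observe that $i_{Z_1\wedge Z_2}\nabla^2$ is a smooth family of first order differential operators, and conclude that $(I+D^2)^{-1/2}[\pa_{Y_1}\cdots\pa_{Y_r}](i_{Z_1\wedge Z_2}\nabla^2)$ is a zeroth order family, hence uniformly bounded. Your version simply spells out more of the bookkeeping (the bilinear dependence on $Z_1,Z_2$, the order-preservation of the contracted derivations $\pa_{\tilde{Y}_j}$, and the handling of the three suprema) that the paper leaves implicit by invoking ``the same lines as the previous lemma.''
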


\begin{proof}
The proof follows the same lines as the previous lemma. More precisely, using the compactness of $B$ we can reduce to local coordinates. But  as can 
be checked in these local coordinates, since the operator $i_{Z_1\wedge Z_2} \nabla^2$ is a smooth family of differential operators of order $1$, 
the smooth family of zero-th order operators $(I+D^2)^{-1/2} [\pa_{Y_1} \cdots \pa _{Y_q}] (i_{Z_1\wedge Z_2} \nabla^2)$ is uniformly bounded. 
\end{proof}

Recall that for $f\in C^\infty (M)$, $
\|f\|_s := \max_{0\leq j\leq s} p_j(f).$

\begin{lemma}\label{normS}
For any $s \geq 0$, there exists a constant $C_s\geq 0$ such that 
$$
\|[\pa_{\tilde{Y}_s}\circ \cdots \circ \pa_{\tilde{Y}_1}] (f)\| \leq C_s \|f\|_s \text{ and } \|[\pa_{\tilde{Y}_s}\circ \cdots \circ \pa_{\tilde{Y}_1}] ([D, f])\|\leq C_s \|f\|_{s+1},
$$
for any $f\in C^\infty(M)$ and any vector fields {{$Y_j$ from the finite family ${\mathcal Y}$}}.
\end{lemma}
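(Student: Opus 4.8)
The plan is an induction on $s$, using that $\pa_{\tilde Y}=[\nabla_{\tilde Y},\,\cdot\,]$, with $\nabla_{\tilde Y}=i_{\tilde Y}\circ\nabla$ of total degree zero, is an \emph{ordinary} (hence Leibniz) derivation of the algebra $\psi^\infty(M|B,E;\Lambda^*B)$, together with the two elementary identities recalled just above, $[\nabla,f]=d_Hf\wedge\cdot$ and $[D,f]=c(d_vf)$. Both right hand sides are bounded zeroth order fiberwise operators whose ``symbols'' are, respectively, the horizontal and the vertical differential of $f$, so their uniform norms are $\leq C\,p_1(f)\leq C\,\|f\|_1$ by compactness of $M$; this settles the case $s=0$ (for the first estimate $s=0$ is trivial, $\|M_f\|=p_0(f)$).

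For the inductive step I would introduce, for each $k\geq 0$, the class $\cO_k$ of operators in $\psi^{0}(M|B,E;\Lambda^*B)$ that are finite sums of composites of: (i) bundle endomorphisms built, in the finitely many local frames used to define the $C^\ell$ norms, out of the covariant derivatives of $f$ of order $\leq k$ for a fixed Clifford connection, with bounded geometric coefficients; (ii) exterior multiplications by, and contractions against, the horizontal $1$-forms obtained by differentiating $f$ (at most $k$ times) or the lifted vector fields $\tilde Y_j$; and (iii) Clifford multiplications $c(\beta)$ with $\beta$ a covariant derivative of order $\leq k$ of $d_vf$. By compactness of $M$ and finiteness of $\mathcal X$ and $\mathcal Y$, every element of $\cO_k$ is bounded with uniform norm $\leq C_k\max_{0\leq j\leq k}p_j(f)=C_k\,\|f\|_k$, the comparison between iterated covariant derivatives and iterated $\mathcal X$-derivatives being the routine Leibniz expansion of the relevant vector fields in the frame $\mathcal X$ with smooth, hence bounded, coefficients. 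The heart of the argument is then the closure property $\pa_{\tilde Y}(\cO_k)\subseteq\cO_{k+1}$ for every $Y\in\mathcal Y$: by the Leibniz rule it suffices to apply $\pa_{\tilde Y}$ to each building block. On a multiplication operator $M_g$ one gets, exactly as in the base case, $\pa_{\tilde Y}(M_g)=i_{\tilde Y}(d_Hg\wedge\cdot)$, a zeroth order operator carrying one more derivative of $g$; the decisive point is that the fiberwise first order term hidden in the local form $\nabla=d^\nu+M_\omega$ leaves the commutator in order zero, because $[\pa/\pa x_j,M_g]=M_{\pa g/\pa x_j}$ is again of order zero. Since $\nabla$ comes from a Clifford connection, $\pa_{\tilde Y}(c(\beta))$ equals $c$ of a covariant derivative of $\beta$ up to a bounded geometric term, which adds one derivative; and $\pa_{\tilde Y}$ of the operators of type (ii) produces only bounded geometric operators, with no new derivative of $f$. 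Hence $\pa_{\tilde Y}$ maps a composite involving $\leq k$ derivatives of $f$ into a sum of composites involving $\leq k+1$, that is, into $\cO_{k+1}$.

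The lemma then follows at once: $M_f\in\cO_0$, so $[\pa_{\tilde Y_s}\circ\cdots\circ\pa_{\tilde Y_1}](f)\in\cO_s$ and its norm is $\leq C_s\,\|f\|_s$; and $[D,f]=c(d_vf)\in\cO_1$, with its single derivative sitting on $d_vf$, so after applying the $s$ derivations $\pa_{\tilde Y_j}$ it lies in the class shifted by one, whence its norm is $\leq C_s\,\|f\|_{s+1}$. I expect the only genuine obstacle to be the bookkeeping in the closure step: verifying that no fiberwise differential operator of positive order ever survives the iterated commutators — which rests precisely on the structure $\nabla=d^\nu+M_\omega$ and the Clifford compatibility of $\nabla^E$ — and that the constants $C_s$ are genuinely uniform over the finite families $\mathcal X,\mathcal Y$ and independent of $f$.
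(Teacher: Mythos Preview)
Your proof is correct, but it builds considerably more machinery than the paper does. The paper's argument for the first inequality is a one-line observation: since $\nabla_{\tilde Y}$ is a covariant derivative in the direction $\tilde Y$, one has $\pa_{\tilde Y}(M_g)=[\nabla_{\tilde Y},M_g]=M_{\tilde Y(g)}$ for any scalar function $g$, and hence by immediate induction
\[
[\pa_{\tilde Y_s}\circ\cdots\circ\pa_{\tilde Y_1}](f)=M_{\tilde Y_s\cdots\tilde Y_1(f)},
\]
whose sup norm is $\leq C_s\,p_s(f)$ by compactness of $M$ and finiteness of $\mathcal Y$. For the second inequality the paper simply says ``using that $[D,f]=c(d_vf)$, expand in local coordinates and proceed similarly'': in a local frame $[D,f]=\sum_j(\partial f/\partial x_j)\,c(dx_j)$, and the same identity $\pa_{\tilde Y}(M_g)=M_{\tilde Y(g)}$ together with Leibniz over the fixed geometric factors $c(dx_j)$ gives the bound with one extra derivative.

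Your filtered classes $\cO_k$ and the closure step $\pa_{\tilde Y}(\cO_k)\subset\cO_{k+1}$ package the same computation, and would pay off if one had to iterate $\pa_{\tilde Y}$ on genuinely composite operators; here it is not needed. One small remark: your formula $\pa_{\tilde Y}(M_g)=i_{\tilde Y}(d_Hg\wedge\cdot)$ is ambiguous --- read as the operator $\xi\mapsto i_{\tilde Y}(d_Hg\wedge\xi)$ it carries a spurious term $-\,d_Hg\wedge i_{\tilde Y}\xi$. The clean statement is $\pa_{\tilde Y}(M_g)=M_{\tilde Y(g)}$, which is exactly the shortcut the paper exploits and which makes your closure property (and the absence of any positive-order survivor) automatic for multiplication operators.
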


\begin{proof}
For any $j\leq s$, we have:
$$
[\pa_{\tilde{Y}_j}\circ \cdots \circ \pa_{\tilde{Y}_1}] (f) = [\tilde{Y}_j \circ \cdots \circ \tilde{Y}_1] (f).
$$
The RHS means the multiplication operator by the function $[\tilde{Y}_j \circ \cdots \circ \tilde{Y}_1] (f)$. Since $M$ is compact, there obviously exists a constant 
$C_s>0$  only depending on the distribution $H$ such that
$$
\|[\tilde{Y}_j \circ \cdots \circ \tilde{Y}_1] (f)\| \leq C_s p_j(f), \quad \text{ for any } j\leq s.
$$
Using the fact that $[D,f]$ is Clifford multiplication by $df$ we can expand in local coordinates
to easily prove in a similar fashion the second estimate.
\end{proof}

\begin{lemma}\label{p=N}\
Fix any $\ep\in ]0, 1/2]$, then for fiberwise pseudodifferential operators $(A_j)_{0\leq j \leq N}$ and $(B_j)_{0\leq j \leq N}$ with $A_j\in \psi^0(M|B, E)$ and $B_j\in \psi^2(M|B, E)$ for any $j$, we have:
$$
\|<A_0, B_0, \cdots, A_N, B_N>\| \leq \left(\frac{\pi}{2\epsilon}\right)^{N+1}\frac{{\| \tau (e^{-(1-\ep)D^2})\|}}{N!} \times \Pi_{j=0}^N \|A_j\| \|(I+D^2)^{-1/2}B_j (I+D^2)^{-1/2}\|.
$$
\end{lemma}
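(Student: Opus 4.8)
The plan is to reduce the form-valued graded trace to a fibrewise trace-class norm, to \emph{tame} the order-two operators $B_j$ by redistributing $(I+D^2)^{\pm 1/2}$ factors onto the neighbouring heat semigroups, and then to combine a generalised H\"older inequality for Schatten ideals with a Dirichlet integral over the simplex.

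First I would unwind the definition: $<A_0, B_0, \cdots, A_N, B_N>$ is the integral over $\Delta(2N+1)$ of $\tau_\sigma$ applied to
$$
P(u) := A_0\, e^{-u_0 D^2} B_0\, e^{-u_1 D^2} A_1\, e^{-u_2 D^2} B_1 \cdots A_N\, e^{-u_{2N} D^2} B_N\, e^{-u_{2N+1} D^2}.
$$
Since $\tau_\sigma$ is dominated fibrewise by the trace-class norm on the closed manifold $M_b$ (on which $D_b$ is elliptic and $e^{-tD_b^2}$ is trace-class), it is enough to bound $\sup_{b}\|P(u)_b\|_1$ uniformly in $u\in\Delta(2N+1)$ and then integrate. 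Next I would write each $B_j = (I+D^2)^{1/2}\,\tilde B_j\,(I+D^2)^{1/2}$ with $\tilde B_j := (I+D^2)^{-1/2}B_j(I+D^2)^{-1/2}$ bounded of norm $\|\tilde B_j\|$, and split every heat factor $e^{-u_k D^2} = e^{-(1-\ep)u_k D^2}\, e^{-\ep u_k D^2}$, in whichever order lets the $\ep$-part absorb the half-power borrowed by the adjacent $B_j$: $e^{-u_{2j}D^2}$ supplies the $(I+D^2)^{1/2}$ to the left of $B_j$ and $e^{-u_{2j+1}D^2}$ the one to its right, so each of the $2N+2$ heat factors is asked for exactly one half-power, its other side always abutting one of the bounded operators $A_j$. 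By the spectral theorem, for $0<t\le 1/2$ one has $\sup_{\lambda\ge 0}(1+\lambda)^{1/2}e^{-t\lambda}=(2t)^{-1/2}e^{t-1/2}\le(2t)^{-1/2}$, so, using $\ep u_k\le 1/2$, each taming factor $e^{-\ep u_k D^2}(I+D^2)^{1/2}$ (or $(I+D^2)^{1/2}e^{-\ep u_k D^2}$) has norm $\le (2\ep u_k)^{-1/2}$. At this point $P(u)_b$ is a finite product whose factors are the bounded operators $A_j$, $\tilde B_j$, the $2N+2$ taming factors, and the $2N+2$ leftover heat operators $e^{-(1-\ep)u_k D_b^2}$.

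Then I would apply the generalised H\"older inequality $\|T_1\cdots T_m\|_1\le\prod_i\|T_i\|_{p_i}$ with $\sum_i p_i^{-1}=1$, assigning exponent $\infty$ to every bounded factor and exponent $1/u_k$ to $e^{-(1-\ep)u_k D_b^2}$; these exponents are conjugate because $\sum_{k=0}^{2N+1}u_k=1$, and $\|e^{-(1-\ep)u_k D_b^2}\|_{1/u_k}=\tau_b(e^{-(1-\ep)D_b^2})^{u_k}$, so their product over $k$ is $\tau_b(e^{-(1-\ep)D_b^2})$. This gives
$$
\|P(u)_b\|_1 \le \tau_b\big(e^{-(1-\ep)D_b^2}\big)\,(2\ep)^{-(N+1)}\Big(\prod_{k=0}^{2N+1}u_k^{-1/2}\Big)\prod_{j=0}^N\|A_j\|\,\|\tilde B_j\|.
$$
Taking the supremum over $b$ and integrating over the simplex, the only $u$-dependent factor left is $\prod_k u_k^{-1/2}$, and the Dirichlet integral $\int_{\Delta(2N+1)}\prod_{k=0}^{2N+1}u_k^{-1/2}\,du_1\cdots du_{2N+1}=\Gamma(1/2)^{2N+2}/\Gamma(N+1)=\pi^{N+1}/N!$ produces precisely the constant $(\pi/2\ep)^{N+1}/N!$ of the statement.

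The functional-calculus estimate and the Dirichlet integral are routine. The step needing care is the combinatorics of the taming/splitting: one must check that each $B_j$ has a heat factor available on both sides, that no heat factor is drained of more than one half-power, and --- the essential point --- that the parts retained for H\"older still carry the full total parameter $1-\ep$ rather than something smaller and $u$-dependent, so that the H\"older exponents come out conjugate and reproduce $\tau(e^{-(1-\ep)D^2})$ with the correct argument. The hypothesis $\ep\in(0,1/2]$ is used exactly to ensure $\ep u_k\le 1/2$ for all simplex coordinates $u_k\le 1$.
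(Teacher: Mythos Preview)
Your proposal is correct and follows essentially the same approach as the paper: both arguments redistribute $(I+D^2)^{\pm 1/2}$ factors around each $B_j$, split every heat factor as $e^{-\ep u_k D^2}e^{-(1-\ep)u_k D^2}$, bound the taming pieces $(I+D^2)^{1/2}e^{-\ep u_k D^2}$ by $(2\ep u_k)^{-1/2}$ via functional calculus, apply the generalised H\"older inequality with exponents $1/u_k$ on the remaining heat pieces to recover $\tau(e^{-(1-\ep)D^2})$, and finish with the Dirichlet integral $\int_{\Delta(2N+1)}\prod_k u_k^{-1/2}=\pi^{N+1}/N!$. The only cosmetic difference is that the paper first rewrites the bracket as $\langle A_0(I+D^2)^{1/2},(I+D^2)^{-1/2}B_0,\ldots\rangle$ and applies H\"older to the grouped entries before splitting, whereas you split first and then apply H\"older; the substance is identical.
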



\begin{proof}
By inspection we have
$$
<A_0, B_0, \cdots, A_N, B_N> = < A_0(I+D^2)^{1/2}, (I+D^2)^{-1/2}B_0 , \cdots , A_N (I+D^2)^{1/2}, (I+D^2)^{-1/2}B_N>.
$$
Therefore, using H\"older's inequality fiberwise, we obtain   
(writing $d\underline{u}=du_1 \cdots du_N, d \underline{v} =dv_0 dv_1\cdots dv_N$):

$\|<A_0, B_0, \cdots, A_N, B_N>\| $ 
\begin{eqnarray*} & \leq & \int_{\Delta (2N+1)} \Pi_{j=0}^N \|A_j (I+D^2)^{1/2} e^{-u_jD^2}\|_{1/u_j}  \|(I+D^2)^{-1/2} B_j e^{-v_jD^2}\|_{1/v_j}  d\underline{u} d\underline{v} \\
& \leq & \Pi_{j=0}^N \|A_j\| \|(I+D^2)^{-1/2} B_j (I+D^2)^{-1/2}\| \\ && \int_{\Delta (2N+1)} \Pi_{j=0}^N \|(I+D^2)^{1/2} e^{-u_jD^2}\|_{1/u_j} \|(I+D^2)^{1/2} e^{-v_jD^2}\|_{1/v_j} d \underline{u}d \underline{v} \\
& \leq & \Pi_{j=0}^N \|A_j\| \|(I+D^2)^{-1/2} B_j (I+D^2)^{-1/2}\| \\ && \int_{\Delta (2N+1)} \Pi_{j=0}^N \|(I+D^2)^{1/2} e^{-u_j\ep D^2}\|  \|(I+D^2)^{1/2} e^{-v_j\ep D^2}\| \|\tau(e^{-(1-\ep)D^2})^{u_j} \tau(e^{-(1-\ep)D^2})^{v_j}\| d \underline{u}d \underline{v}
\end{eqnarray*}
\begin{eqnarray*}
&\leq & \| e^{-(1-\ep)D^2}\|_1 \Pi_{j=0}^N \|A_j\| \|(I+D^2)^{-1/2} B_j (I+D^2)^{-1/2}\| \\ & & \int_{\Delta (2N+1)} \Pi_{j=0}^N \|(I+D^2)^{1/2} e^{-u_j\ep D^2}\|  \|(I+D^2)^{1/2} e^{-v_j\ep D^2}\|  d \underline{u}d \underline{v}.
\end{eqnarray*}
But, for any $\alpha >0$, we have by  the spectral theorem in $\maL(\maE)$  (see for instance \cite{BenameurPiazza}),
$$
\|(I+D^2)^{1/2} e^{-\alpha D^2}\| \leq \frac{e^{\alpha - 1/2}}{\sqrt{2\alpha}}.
$$
Therefore, 
$$
\Pi_{j=0}^N \|(I+D^2)^{1/2} e^{-u_j\ep D^2}\|  \|(I+D^2)^{1/2} e^{-v_j\ep D^2}\| \leq \frac{e^{\ep -1/2} }{(2\ep)^{N+1}} \times \Pi_{j=0}^N (u_jv_j)^{-1/2}.
$$
Now we complete the proof by computing the following integral:
$$
\int_{\Delta (2N+1)} \Pi_{j=0}^N (u_j v_j)^{-1/2} d \underline{u}d \underline{v} = \frac{\pi^{N+1}}{N!}.
$$
\end{proof}

We shall also need the intermediate estimate corresponding to $p+1$ entries of second order fiberwise pseudodifferential operators in $< \cdots >_{n+p+1}$. In fact a similar method of proof establishes our next result.

\begin{lemma}\label{p<N}\
For any $\ep\in ]0, 1/2]$, for any $A_0, \cdots , A_N\in \psi^0(M\vert B, E)$ and any $B_{j_0}, \cdots , B_{j_p}\in \psi^2(M|B, E)$ with $p<N$ and $0\leq j_0 < \cdots < j_p\leq N$, the following estimate holds
\begin{multline*}
\left\| \left< A_0, \cdots , A_{j_0}, B_{j_0}, A_{j_0+1}, \cdots, A_{j_1}, B_{j_1}, \cdots , A_{j_p}, B_{j_p}, A_{j_p+1}, \cdots, A_N\right> \right\| \leq \\ \left(\frac{\pi}{2\ep}\right)^{p+1}\frac{\|\tau (e^{-(1-\ep)D^2})\| }{ N!} \times \Pi_{i=0}^N \|A_i\| \Pi_{i=0}^p \|(I+D^2)^{-1/2} B_{j_i} (I+D^2)^{-1/2} \|.
\end{multline*}
\end{lemma}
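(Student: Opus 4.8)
The plan is to follow, essentially verbatim, the argument used for Lemma~\ref{p=N}, tracking the one structural difference: only $p+1$ of the interleaved operators are now of order two, so only $2(p+1)$ of the heat factors need to be modified rather than all of them. First I would write each order-two entry as
$$
B_{j_i} = (I+D^2)^{1/2}\,\bigl[(I+D^2)^{-1/2}B_{j_i}(I+D^2)^{-1/2}\bigr]\,(I+D^2)^{1/2},
$$
and, using that $(I+D^2)^{\pm 1/2}$ commutes with every $e^{-uD^2}$, push the two half-powers attached to $B_{j_i}$ into the two heat factors that flank $B_{j_i}$ inside $\tau_\sigma(\cdots e^{-u_{k-1}D^2}B_{j_i}e^{-u_kD^2}\cdots)$. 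Since the order-two entries sit at the distinct positions $j_0<\cdots<j_p$ and are separated by at least one $A$, these $2(p+1)$ flanking heat factors are pairwise distinct (the case $j_p=N$, in which $B_N$ is the last entry, causes no problem because $\tau_\sigma$ is a graded trace, so its right-hand flanking factor is produced cyclically). This reduces the estimate to bounding $\tau_\sigma$ of a product whose entries are the $A_i$ together with the bounded zeroth-order operators $C_{j_i}:=(I+D^2)^{-1/2}B_{j_i}(I+D^2)^{-1/2}$, and in which exactly $2(p+1)$ of the $N+p+2$ heat factors carry an extra left or right $(I+D^2)^{1/2}$.

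Next I would apply H\"older's inequality for the fiberwise Schatten ideals along all $N+p+2$ factors, with exponents $1/u_k$ read off from the coordinates of $\Delta(N+p+1)$ so that $\sum_k u_k=1$, pulling the operator norms $\|A_i\|$ and $\|C_{j_i}\|$ outside. Each heat factor is split as $e^{-u_kD^2}=e^{-u_k\ep D^2}e^{-u_k(1-\ep)D^2}$; for the $2(p+1)$ flanking ones I use the spectral bound $\|(I+D^2)^{1/2}e^{-u_k\ep D^2}\|\le e^{u_k\ep-1/2}(2u_k\ep)^{-1/2}$ in $\maL(\maE)$, for the other $N-p$ I simply use $\|e^{-u_k\ep D^2}\|\le 1$, and in every case I keep the residual $e^{-u_k(1-\ep)D^2}$ inside the trace. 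Since $\sum_k u_k(1-\ep)=1-\ep$, the residual heat operators multiply to $e^{-(1-\ep)D^2}$ once all operator-norm pieces have been extracted, so the surviving fiberwise trace is bounded by $\|\tau(e^{-(1-\ep)D^2})\|$, while the extracted spectral pieces contribute $e^{\ep-1/2}(2\ep)^{-(p+1)}\prod_{k\in F}u_k^{-1/2}\le (2\ep)^{-(p+1)}\prod_{k\in F}u_k^{-1/2}$, with $F$ the set of the $2(p+1)$ flanking indices.

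Finally, the remaining integral over the simplex is a Dirichlet integral,
$$
\int_{\Delta(N+p+1)}\ \prod_{k\in F}u_k^{-1/2}\,du\;=\;\frac{\Gamma(1/2)^{2(p+1)}\,\Gamma(1)^{N-p}}{\Gamma(N+1)}\;=\;\frac{\pi^{p+1}}{N!},
$$
and assembling this with the $(2\ep)^{-(p+1)}$, the $\|\tau(e^{-(1-\ep)D^2})\|$, and the operator norms gives exactly the asserted bound; when $p=N$ every heat factor is flanking and the argument degenerates to Lemma~\ref{p=N}. The only genuinely delicate point, and the main thing to be careful about, is the combinatorial bookkeeping in the first step: one must verify that the $2(p+1)$ conjugating half-powers really land on $2(p+1)$ distinct heat factors (this is where $p<N$ and the separation of the indices $j_i$ are used) and that, after the $\ep$-splitting, the H\"older exponents still sum to $1$; once this is in place the rest is routine and parallels the proof of Lemma~\ref{p=N}.
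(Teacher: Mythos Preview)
Your proposal is correct and follows essentially the same route as the paper's own proof: insert $(I+D^2)^{\pm1/2}$ around each order-two entry, apply the fiberwise H\"older inequality with Schatten exponents $1/u_k$, split off an $\ep$-fraction of each modified heat factor to absorb the resolvent power via the spectral bound, recombine the remaining $(1-\ep)$-fractions into $\|\tau(e^{-(1-\ep)D^2})\|$, and evaluate the resulting Dirichlet integral $\int_{\Delta(N+p+1)}\prod_{k\in F}u_k^{-1/2}\,du=\pi^{p+1}/N!$. Two small remarks: your parenthetical about the case $j_p=N$ needing cyclicity is unnecessary, since by definition the last entry in $\langle\cdots\rangle$ is always followed by its own heat factor $e^{-u_{\text{last}}D^2}$; and your Dirichlet integral value $\pi^{p+1}/N!$ is the correct one (the paper's displayed intermediate value carries a superfluous $p!$ which does not appear in the final bound).
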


\begin{proof}
We apply again the method of proof of Lemma \ref{p=N} and use the equality
$$
\int_{u_0+\cdots + u_N+ v_{j_0}+ \cdots + v_{j_p} = 1} \frac{du_1\cdots du_N dv_{j_0}\cdots dv_{j_p}}{\sqrt{u_{j_0} \cdots u_{j_p} v_{j_0} \cdots v_{j_p}}} = \frac{\pi^{p+1} p!}{N!}.
$$
More precisely, we have
\begin{multline*}
\| \tau (A_0e^{-u_0D^2} \cdots A_{j_0}e^{-u_{j_0}D^2}  B_{j_0} e^{-v_{j_0}D^2} \\ A_{j_0+1}e^{-u_{j_0+1}D^2}  \cdots A_{j_p}e^{-u_{j_p}D^2} B_{j_p}e^{-v_{j_p}D^2}  A_{j_p+1}e^{-u_{j_p+1} D^2}  \cdots A_Ne^{-u_{N}D^2})\|  \leq \\ \Pi_{i=0}^N \|A_i\| \Pi_{i=0}^p \|(I+D^2)^{-1/2} B_{j_i}  (I+D^2)^{-1/2}\| \|e^{-(1-\ep)D^2}\|_1  \Pi_{i=0}^p \|(I+D^2)^{1/2} e^{-u_{j_i}\epsilon D^2}\| \| (I+D^2)^{1/2} e^{-v_{j_i}\epsilon D^2}\|. 
\end{multline*}
Next we apply the spectral theorem in $\maE$ to estimate
$$
\| (I+D^2)^{1/2} e^{-u_{j_i}D^2}\| \| (I+D^2)^{1/2} e^{-v_{j_i}D^2}\| \leq \frac{e^{-1/2 + \ep u_{j_i}}}{\sqrt{2u_{j_i}\ep}} \frac{e^{-1/2 + \ep v_{j_i}}}{\sqrt{2v_{j_i}\ep}} \leq \frac{1}{\sqrt{2u_{j_i}\ep}\sqrt{2v_{j_i}\ep}}.
$$
The rest of the proof is straightforward.
\end{proof}

\subsection{{Last steps of the proof of the theorem}}
Recall from \cite{MeyerThesis} that the universal differential graded algebra $\Omega C^{\infty}(M)$ is endowed with the entire bornology $\Sigma_{\ell+1}$ generated by the sets $n!\langle S\rangle (dS)^\infty$ where $S$ describes the bounded subsets of $C^{\infty}(M)$ for the 
$C^{\ell+1}$  topology recalled in the beginning of subsection \ref{borno}. Recall  that $\Omega^*(B)$ is similarly endowed with the  bornology given by the bounded sets for the $C^\ell$ topology on smooth forms.

In order to estimate the semi-norms of $\psi_N (f_0, \cdots, f_N)$, we need to expand into its homogeneous components. We denote by $\maJ$ the subset of $\{0,1\}^3$ given by
$$
\maJ = \{(1,0,0); (0,1,0); (0,0,1) \}
$$
For $\alpha\in \maJ$ we denote by $\alpha^{(j)}$ the $j$-th component of $\alpha$, $j=1,2,3$. So only one of the integers $\alpha^{(j)}$ is non trivial and equals $1$. We shall set  $b^{\alpha^{(j)}}$ in a given expression to mean that  when $\alpha^{(j)} =1$, we take into account $b$ but when $\alpha^{(j)} =0$ then we simply erase $b$ from the expression. For instance
$$
(a_0, \cdots, a_k, b^{\alpha^{(j)}}, a_{k+1}, \cdots, a_n),
$$
equals the $(n+2)$-tuple $(a_0, \cdots, a_k, b, a_{k+1}, \cdots, a_n)$ when $\alpha^{(j)} =1$ and the $(n+1)$-tuple $(a_0, \cdots,  a_n)$ when $\alpha^{(j)} =0$. For $\alpha \in \maJ$, we set
$$
X^{\alpha} (b) := [\nabla, b]^{\alpha^{(1)}} (\sigma [\nabla, D])^{\alpha^{(2)}} \nabla^{2\alpha^{(3)}}.
$$
For any  $m\geq 0$, $n=(n_0, \cdots n_m)\in \N^{m+1}$ and $\alpha=(\alpha_1, \cdots, \alpha_m)\in \maJ^m$, we define an  $\sum_{j=0}^m n_j + \sum_{i=1}^m \alpha_i^{(1)}$ cochain $\phi^m_{\alpha, n}$ with values in $ m + \sum_{i=1}^m \alpha_i^{(3)}$ differential forms on $B$, by the formula
\begin{multline*}
 \phi^m_{\alpha, n} (f_0, \cdots, f_{n_0}, g_1^{\alpha_1^{(1)}}, f_{n_0+1}, \cdots, f_{n_0+n_1}, \cdots, g_m^{\alpha_m^{(1)}}, f_{n_0+n_1 + \cdots +n_{m-1} + 1}, \cdots , f_{n_0 + \cdots + n_m} ) := \\ < f_0, \sigma [D,f_1], \cdots , \sigma [D,f_{n_0}], X^{\alpha_1}(g_1), \sigma [D, f_{n_0+1}], \cdots , \sigma [D, f_{n_0+n_1}], \\
 \cdots , X^{\alpha_m}(g_m) , \sigma [D, f_{n_0+\cdots n_{m-1}+1}], \cdots , \sigma [D,f_{n_0+\cdots n_m}] >. 
\end{multline*}

\begin{lemma}
The cochains $\psi_{N}$ of the JLO cocycle can be expanded as a finite algebraic sum over  $m\geq 0$, $n=(n_0, \cdots n_m)\in \N^{m+1}$ and $\alpha=(\alpha_1, \cdots, \alpha_m)\in \maJ^m$ of the bihomogeneous cochains $\phi^m_{\alpha, n}$. Moreover the number of
such $\phi^m_{\alpha, n}$ is {{ bounded by $(\dim B)^{N+1}2^{\dim B}$}}.
\end{lemma}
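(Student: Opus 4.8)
The plan is to produce the expansion by a direct substitution and multiplying out, in the style of \cite{GetzlerSzenes} and of the proof of Lemma \ref{relations}. Starting from
$$
\psi_N(f_0,\dots,f_N)=\langle\langle f_0,[\B_\sigma,f_1],\dots,[\B_\sigma,f_N]\rangle\rangle_{\B_\sigma},
$$
I would insert the two elementary decompositions $[\B_\sigma,f_i]=\sigma[D,f_i]+[\nabla,f_i]$ and $\B_\sigma^2=D^2+X$ with $X=\nabla^2-\sigma[\nabla,D]$, and re-expand each heat factor $e^{-u_j\B_\sigma^2}$ through its defining finite perturbative series in $D^2$ and $X$. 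Multiplying everything out, $\psi_N(f_0,\dots,f_N)$ becomes a finite sum of expressions of the form $\pm\int\tau_\sigma(A_0e^{-t_0D^2}A_1e^{-t_1D^2}\cdots A_Le^{-t_LD^2})$ where $A_0=f_0$; the entries carrying $f_1,\dots,f_N$ occur in their original order and are each $\sigma[D,f_i]$ or $[\nabla,f_i]=X^{(1,0,0)}(f_i)$; and the remaining entries, produced by the $X$'s, are each $\nabla^2=X^{(0,0,1)}$ or $\sigma[\nabla,D]=X^{(0,1,0)}$ (the latter with a minus sign).

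The one point that is not purely formal is the reorganisation of the nested simplex integrals. If the $j$-th heat factor produces $m_j$ copies of $X$ and $L=N+\sum_jm_j$, then each $t_l$ is a product $u_jw^{(j)}_i$ of an outer simplex variable with an inner variable of the relevant perturbative series; since $\sum_lt_l=\sum_ju_j\big(\sum_iw^{(j)}_i\big)=1$ and the classical Jacobian identity gives $\prod_ju_j^{m_j}\,du_1\cdots du_N\,\prod_jdw^{(j)}_1\cdots dw^{(j)}_{m_j}=dt_1\cdots dt_L$ on $\Delta(N)\times\prod_j\Delta(m_j)$, the weight $\prod_j(-u_j)^{m_j}$ from the perturbative series is absorbed exactly, up to the global sign $(-1)^{\sum_jm_j}$, and each summand collapses to $\pm\langle A_0,\dots,A_L\rangle$. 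Reading off the positions and types of the degree-raising entries then identifies this, by the very definition of $\phi^m_{\alpha,n}$, with $\pm\phi^m_{\alpha,n}(f_0,\dots,f_N)$: $m$ is the number of such entries, $\alpha\in\maJ^m$ lists their types left to right, and $n\in\N^{m+1}$ records how many $\sigma[D,f_i]$ entries fall in each of the $m+1$ blocks they delimit. The identity $\sum_jn_j+\sum_i\alpha_i^{(1)}=N$ is automatic since each of $f_1,\dots,f_N$ is used exactly once, either as a $\sigma[D,\cdot]$ entry or as the argument of an $X^{(1,0,0)}$.

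Finiteness and the count then follow from one observation: every degree-raising entry raises the form degree on $B$ by at least one, so a term with $m$ of them lies in $\Omega^{\ge m}(B)$ and vanishes unless $m\le\dim B$; in particular the perturbative series terminate. Hence only finitely many $(m,\alpha,n)$ are admissible, all with $m\le\dim B$, and a direct enumeration --- at most $\dim B$ insertions, each of one of a bounded number of types (two, after grouping the function-free insertions $\nabla^2$ and $\sigma[\nabla,D]$, which accounts for the factor $2^{\dim B}$), distributed among the $N+1$ arguments $f_0,\dots,f_N$ (which accounts for $(\dim B)^{N+1}$) --- yields the stated bound on the number of distinct $\phi^m_{\alpha,n}$.

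The hard part is the middle step: keeping the tower of iterated integrations, the sign $(-1)^{\sum_jm_j}$, and the (harmless, since $\sigma$ is graded-central) signs from commuting $\sigma$'s past operators all consistent, and checking that the reparametrisation sends each summand onto exactly one $\phi^m_{\alpha,n}$ with no residual $u$-weight. Once that bookkeeping is done the finiteness claim and the crude count are routine.
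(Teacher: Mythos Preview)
Your proposal is correct and follows essentially the same approach as the paper: expand each heat factor by its finite Duhamel series, perform the standard simplex change of variables to collapse the nested integrals into a single $\langle\cdots\rangle$, then split $X=\nabla^2-\sigma[\nabla,D]$ and $[\B_\sigma,f_j]=\sigma[D,f_j]+[\nabla,f_j]$ to arrive at the $\phi^m_{\alpha,n}$'s. In fact you spell out the Jacobian step (the absorption of $\prod_j(-u_j)^{m_j}$ into the global sign $(-1)^{\sum m_j}$) more explicitly than the paper, which simply calls it ``a straightforward change of variables''; your handling of the count via the form-degree bound $m\le\dim B$ is likewise parallel to the paper's, and equally heuristic in arriving at the specific constant $(\dim B)^{N+1}2^{\dim B}$ --- but, as you correctly identify, only the shape $C^{N+1}$ with $C=C(\dim B)$ matters for what follows.
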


\begin{proof}
We first replace in $\psi_N$, each factor $e^{-u_j\B_\sigma^2}$ by its definition, a finite perturbative sum,  and by  using a straightforward change of variables, we easily deduce that $\psi_N (f_0, \cdots, f_N)$ is a finite signed sum over $(m_0, \cdots, m_N)\in \N^{N+1}$ of the terms 
$$
\left< f_0; \stackrel{m_0\text{ times}}{\overbrace{X, \cdots, X}}; [\B_\sigma, f_1]; \stackrel{m_1\text{ times}}{\overbrace{X, \cdots, X}}; \cdots ; [\B_\sigma, f_N]; \stackrel{m_N\text{ times}}{\overbrace{X,\cdots , X}}\right>
$$
Now,  replacing $X$ by its value $\nabla^2 - \sigma [\nabla, D]$ and $[\B_\sigma, f_j]$ by its value $[\nabla, f_j]- \sigma [D, f_j]$, it is easy to rewrite each such term as a finite signed sum of appropriate  $\phi^m_{\alpha, n}$'s.

Next we see that $\sum_{j=o}^N m_j$ is bounded by the dimension $\dim B$ of the base
because we cannot have more than $\dim B$ differential forms in any term.
If we set 
$$
\lambda_N=\sharp \{(m_0,m_1,\ldots, m_N)\ \vert \ \sum_j m_j \leq \dim B\}
$$
and $|m|= m_0+\ldots +m_N$
then necessarily we have  $2^{|m|} \times \lambda_N\leq (\dim B)^{N+1}2^{\dim B}$ as required.
\end{proof}

We have chosen to  expand the JLO cocycle as a finite combination of bihomogeneous cochains with respect to the cochain grading and the form grading. By doing so, our formulae are explicit enough to be paired with closed currents on the base.

{{
\begin{proposition}
Set $N= n_0+\cdots +n_m + m$, then for $N$ large, the bihomogeneous cochain $\phi^m_{\alpha, n}$ can be estimated as follows:
$$
p_r \left(\phi^m_{\alpha, n} (f_0, \cdots, f_{N})\right) \leq 
\frac{C'(S)^{N+1}}{N!}, \quad 0\leq r\leq \ell\text{ and } f_j\in S.
$$
where $C'(S)$ is some constant which only depends on the bounded set $S$ for the $C^{\ell+1}$ topology on $C^\infty (M)$.  
\end{proposition}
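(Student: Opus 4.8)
The plan is to reduce everything to Lemma \ref{p<N}, after separating in the generalized JLO functional defining $\phi^m_{\alpha,n}$ the zeroth-order operator entries from the higher-order ones, and then controlling what the base-derivatives entering the semi-norm $p_r$ do. First I would note that among the $N+1$ operator entries of $\phi^m_{\alpha,n}(f_0,\dots,f_N)$ the entries $f_0$, the $\sigma[D,f_j]=\sigma c(d_vf_j)$, and (when $\alpha_i=(1,0,0)$) the $X^{\alpha_i}(g_i)=[\nabla,g_i]=d_Hg_i\wedge\cdot$ are of fiberwise order $0$, while the entries with $\alpha_i=(0,1,0)$ or $(0,0,1)$, namely $\sigma[\nabla,D]$ and $\nabla^2$, are fiberwise of order $\le 2$ and carry positive degree forms on $B$; the number $q$ of the latter is at most $m\le\dim B$, hence bounded independently of $N$. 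Using the $\Omega^*(B)$-linearity of $\tau_\sigma$ and of the functional, I would pull the base forms out in front, so that $\phi^m_{\alpha,n}(f_0,\dots,f_N)$ becomes a sum of base forms (of degree $\le 2\dim B$, with norms bounded by fixed geometric constants, except for the $d_Hg_i$ whose norm is $\le\|g_i\|_1$) times a generalized JLO functional $\langle\cdots\rangle$ having exactly $N+1-q$ entries in $\psi^0(M|B,E)$ and $q$ entries in $\psi^{2}(M|B,E)$.

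For $r=0$ I would then invoke Lemma \ref{p<N} with its ``$N$'' equal to $N-q$ and its ``$p$'' equal to $q-1$ (legitimate once $N>2q-1$, i.e. for $N$ large), bounding each zeroth-order entry by $C_0\|f_j\|_1\le C_0\,\rho(S)$ via Lemma \ref{normS}, where $\rho(S):=\sup_{f\in S}\|f\|_{\ell+1}$, and each conjugated higher-order entry $\|(I+D^2)^{-1/2}B(I+D^2)^{-1/2}\|$ by a fixed constant $K$. This gives
$$
\left(\tfrac{\pi}{2\ep}\right)^{q}K^{q}\,\|\tau(e^{-(1-\ep)D^2})\|\;\frac{(C_0\,\rho(S))^{N-q+1}}{(N-q)!}\,,
$$
and since $\tfrac{1}{(N-q)!}=\tfrac{N!}{(N-q)!}\tfrac{1}{N!}\le N^{q}\tfrac{1}{N!}\le N^{\dim B}\tfrac{1}{N!}$ while $q\le\dim B$, all the prefactors are subexponential in $N$ and may be absorbed into a single constant $C'(S)^{N+1}$, leaving the wanted $1/N!$.

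For $1\le r\le\ell$ I would use that $\tau_\sigma$ is closed (Lemma \ref{CommNabla}): contracting $d_B\circ\tau_\sigma=-\tau_\sigma\circ\pa$ with the horizontal lifts $\tilde Y_j$ converts $d_{\tilde Y_1}\circ\cdots\circ d_{\tilde Y_r}$ applied to $\phi^m_{\alpha,n}(f_0,\dots,f_N)$ into $\pm\,\tau_\sigma\bigl(\pa_{\tilde Y_1}\cdots\pa_{\tilde Y_r}(A_0e^{-u_0D^2}A_1\cdots)\bigr)$, modulo Cartan-calculus correction terms of the same structure coming from commuting the contractions $i_{Z_\mu}$ past $d_B$ (these involve only the fixed vector fields, cost bounded constants, and are further damped by the $2^{-kr}$ normalisation since $k\le 2\dim B$). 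Expanding $\pa_{\tilde Y_1}\cdots\pa_{\tilde Y_r}$ by the Leibniz rule over the $\le 2(N+1)$ factors of the product gives at most $(2N+2)^{r}$ terms. In each term: a derivative hitting a zeroth-order entry keeps it of order $0$ with norm $\le C_s\|f_j\|_{s+1}\le C_\ell\,\rho(S)$ for $s\le r\le\ell$ (Lemma \ref{normS} --- this is exactly where the $C^{\ell+1}$ topology on $M$ is needed); a derivative hitting $\sigma[\nabla,D]$ keeps it of fiberwise order $\le 1$ with the $(I+D^2)^{-1/2}(\cdot)(I+D^2)^{-1/2}$-norm bounded by the constants $\alpha_s(D)$ of Lemma \ref{alphaq}; a derivative hitting $\nabla^2$ gives $0$ by the Bianchi identity $[\nabla,\nabla^2]=0$, while its contractions are controlled by Lemma \ref{EstimateNabla2}; and a derivative hitting a heat kernel $e^{-u_jD^2}$ is treated by the Duhamel principle (Lemma \ref{Duhamel}), replacing that factor by $-u_j\int_0^1 e^{-su_jD^2}\,\pa_{\tilde Y}(D^2)\,e^{-(1-s)u_jD^2}\,ds$, hence inserting one extra entry of fiberwise order $2$ whose conjugated norm is $\le\alpha_1(D^2)<\infty$ (iterated hits produce $\pa_{\tilde Y_{i_1}}\cdots\pa_{\tilde Y_{i_t}}(D^2)$, still controlled by Lemma \ref{alphaq}). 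The crucial point is that the total number of heat-kernel insertions in any term is at most $r\le\ell$, so rewriting each term again as a generalized JLO functional its number of order-$\le 2$ entries is at most $q+\ell\le\dim B+\ell$, still bounded independently of $N$, while its number of order-$0$ entries stays $N+1-q$. Thus Lemma \ref{p<N} applies exactly as for $r=0$, with its ``$N$'' $=N-q$ and its ``$p$'' $\le\dim B+\ell-1$ (legitimate for $N$ large), and summing over the $\le(2N+2)^{\ell}$ Leibniz terms contributes only a subexponential factor; together with $\tfrac{1}{(N-q)!}\le N^{\dim B}\tfrac{1}{N!}$ and the bounded powers $(\pi/2\ep)^{q+\ell}$, $K^{q+\ell}$, everything is absorbed into $C'(S)^{N+1}/N!$ for a suitable $C'(S)$ depending on $S$ only through $\rho(S)=\sup_{f\in S}\|f\|_{\ell+1}$ (and on $\ell$, $\dim B$, $\ep$ and the fixed geometric data).

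The main obstacle is precisely this last bookkeeping: one must verify that differentiating the bivariant form $r\le\ell$ times never forces the number of higher-order operator entries inside the JLO functional to grow with $N$ --- it grows by at most $r$, via Duhamel --- so that the factorial decay $1/(N-q)!$ produced by the simplex integral of Lemma \ref{p<N} survives, and that every genuinely $N$-dependent combinatorial factor (the $(2N+2)^{\ell}$ Leibniz terms, $N^{q}=N!/(N-q)!$, the powers $(\pi/2\ep)^{q+\ell}$ and $K^{q+\ell}$) is subexponential and can be swallowed into the single constant $C'(S)$. The regularity shift (why $C^{\ell+1}$ on $M$ and $C^{\ell}$ on $B$) is not an obstacle but a by-product: the $r\le\ell$ base-derivatives produce on each $f$-dependent factor up to $\ell$ applications of $\pa$ on top of the derivative already present in $[D,f]$ or $[\nabla,f]$, and Lemma \ref{normS} bounds the result by $\|f\|_{\ell+1}$.
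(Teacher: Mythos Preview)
Your overall strategy is the paper's: separate the (boundedly many) higher-order entries from the zeroth-order ones, use the closedness of $\tau_\sigma$ to trade $d_{Y_1}\cdots d_{Y_r}$ for $\pa_{\tilde Y_1}\cdots\pa_{\tilde Y_r}$ inside the trace, expand by Leibniz, absorb derivatives of heat kernels via Duhamel, and observe that the number of higher-order insertions is bounded by $\dim B+\ell$ independently of $N$. Your diagnosis of the $C^{\ell+1}\!\to C^\ell$ shift is also correct.

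There is, however, a genuine gap in the reduction to Lemma~\ref{p<N}. That lemma assumes each second-order entry $B_{j_i}$ is \emph{flanked} on both sides by a zeroth-order $A_j$; its proof borrows one factor $(I+D^2)^{1/2}$ from each of the two adjacent heat kernels, so that every $e^{-u_jD^2}$ contributes at most $u_j^{-1/2}$ to the simplex integrand. Your terms do not always have this interleaved structure. Already for $r=0$: whenever some $n_i=0$ the entries $X^{\alpha_i}$ and $X^{\alpha_{i+1}}$ are adjacent, and both may be of positive order. For $r\ge 2$: two Duhamel insertions into the \emph{same} heat kernel produce terms like $\langle\ldots,\pa_{\tilde Y_2}(D^2),\pa_{\tilde Y_1}(D^2),\ldots\rangle$ with two adjacent order-$2$ entries. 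In that situation the heat kernel between them must supply $(I+D^2)$ rather than $(I+D^2)^{1/2}$, and $\|(I+D^2)e^{-\epsilon uD^2}\|\sim(\epsilon u)^{-1}$ makes the simplex integral $\int_{\Delta}u_j^{-1}\cdots$ diverge. So Lemma~\ref{p<N} as stated cannot simply be invoked with ``$p\le\dim B+\ell-1$''. The paper's Step~III fixes exactly this: one rewrites the product with telescoping conjugations $(I+D^2)^{c_j}A_j(I+D^2)^{-c_j-\alpha_j/2}$ (each bounded) so that the accumulated power of $(I+D^2)$ is shed one $(I+D^2)^{1/2}$ at a time over the first $r+k+2$ heat kernels and then unwound over the subsequent zeroth-order entries (this is where ``$N$ large'' is used). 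The resulting integrand $(u_0\cdots u_{r+k+1})^{-1/2}$ over $\Delta(N)$ is handled by a beta-function identity and yields the required $1/N!$. Your bookkeeping of subexponential prefactors is fine once this estimate is in place, but the estimate itself is the missing ingredient.

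Two smaller corrections. First, $\pa_{\tilde Y}(\nabla^2)=[\nabla_{\tilde Y},\nabla^2]$ is \emph{not} zero; the Bianchi identity $[\nabla,\nabla^2]=0$ is the uncontracted statement, and its contraction does not give $[\nabla_{\tilde Y},\nabla^2]=0$. What you actually need (and correctly cite) is Lemma~\ref{EstimateNabla2}, which bounds $\pa_{\tilde Y_1}\cdots\pa_{\tilde Y_r}(i_{\tilde Z_1\wedge\tilde Z_2}\nabla^2)$ as a first-order operator. Second, you cannot globally ``pull the base forms out in front'' from $\nabla^2$ or $\sigma[\nabla,D]$, since these are operator-valued forms; the paper instead contracts first with $i_{Z_1}\cdots i_{Z_k}$ (landing each entry in $\psi^*(M|B,E)$, as in Lemma~\ref{EstimateNabla2}) and only then applies the $d_{Y_j}$'s --- which is in any case the order prescribed by the definition of $p_r$.
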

}}

\begin{proof}  
For simplicity, we denote by  $i_Y$ either contraction by the vector field $Y$ over $B$, or by  its horizontal lift on $M$ or  on $\psi^\infty (M|B,E;\Lambda^*B)$ and $\psi^\infty (M|B,E;\Lambda^*B)[\sigma]$. Then for fiberwise smoothing operators $T\in \psi^{-\infty} (M|B,E;\Lambda^*B)[\sigma]$, we have
$$
(i_Y \circ \tau_\sigma)(T) = -(\tau_\sigma\circ i_Y)(T) \text{ and } (d_B\circ \tau_\sigma) (T) + (\tau_\sigma \circ \pa) (T)=0.
$$
Therefore, if $d_Y:= i_Y\circ d_B$ is the derivative in the direction $Y$ in $B$, then
$$
(d_Y\circ \tau_\sigma) (T) - (\tau_\sigma\circ \pa_Y)(T) = 0$$
We need to estimate for $0\leq s \leq \ell$, the semi-norms 
$$
p_s (\phi^m_{\alpha, n} (f_0, \cdots, f_N )) \text{ where } N= n_0+\cdots +n_m + m \text{ odd},
$$
for given $f_0, \cdots , f_N$ in a bounded set $S$ for the $C^{\ell+1}$ topology. So, we assume that there exists a constant $C\geq 0$ such that $\|f_j\|_t\leq C$ for any $0\leq t\leq \ell+1$ and for $0\leq j \leq N$. \\

For the convenience of the reader, we first explain the proof for $m=0$ thus giving a guide to the general case.

\underline{Step I: $m=0$}\\
We begin by estimating the $p_s$ semi-norms of functions on $B$. They are given by
$$
\phi_N^0 (f_0, \cdots, f_N) = \left< f_0, \sigma [D, f_1], \cdots, \sigma [D, f_N]\right >.
$$
Let  $\|\cdot \|_\alpha$ denote the  supremum over $B$ of the fiberwise $\alpha$-Schatten norm of a compact  operator on $L^2$ sections. Using H\"older's inequality for each $b\in B$ and taking the supremum over $B$, we have:
\begin{eqnarray*}
\| \phi_N^0 (f_0, \cdots, f_N)\| & \leq & \int_{\Delta(N)}  \|\tau \left( f_0 e^{-u_0D^2} [D, f_1] e^{-u_1D^2}\cdots [D, f_N] e^{-u_ND^2} \right)\|  du_1\cdots du_N \\ &\leq &  \int_{\Delta(N)} \|f_0\| \|e^{-u_0D^2}\|_{1/u_0} \|[D, f_1]\| \|e^{-u_1D^2}\|_{1/u_1} \cdots \|[D, f_N]\| \|e^{-u_ND^2}\|_{1/u_N}\\ & \leq & \frac{\|e^{-D^2}\|_1}{N!} \times \|f_0\| \Pi_{j=1}^N \|[D_b, f_j]\|.
\end{eqnarray*}

Using Lemma \ref{normS}, we deduce
\begin{eqnarray*}
\| \phi_N^0 (f_0, \cdots, f_N)\| 
& \leq  &  C_0^N \frac{\|e^{-D^2}\|_1}{N!}\times \|f_0\| \|f_1\|_1 \cdots \|f_N\|_1\\
& \leq &  \|e^{-D^2}\|_1\frac{C_0^N C^{N+1}}{N!}.
\end{eqnarray*}\\
In the same way, let $Y_1, \cdots, Y_s$ be vector fields on $B$ {{taken from the finite collection ${\mathcal Y}$}}. Using Lemma \ref{CommNabla}, we can write
$$
d_{Y_1}  \cdots d_{Y_s} \phi_N^0 (f_0, \cdots, f_N)  =  \int_{\Delta(N)} \tau \left( [\pa_{Y_1}  \cdots \pa_{Y_s}] (f_0 e^{-u_0D^2} [D, f_1] e^{-u_1D^2}\cdots [D, f_N] e^{-u_ND^2}) \right) du_1\cdots du_N.
$$
Note that the operators $\pa_{Y_{j_1}}\cdots \pa_{Y_{j_k}} (f)$ and $\pa_{Y_{j_1}}\cdots \pa_{Y_{j_k}} [D,f]$ are zero-th order differential operators on $M$ and by Lemma \ref{normS}, we can estimate
$$
\|\pa_{Y_{j_1}}\cdots \pa_{Y_{j_k}} (f)\|\leq C_k \|f\|_k \text{ and } \|\pa_{Y_{j_1}}\cdots \pa_{Y_{j_k}} [D,f]\| \leq C_k \|f\|_{k+1}.
$$
Therefore, one can apply the H\"older inequality exactly, as in the case $s=0$ treated above, and deduce the required estimates for all the terms that involve no derivatives of the fiberwise smoothing operators $e^{-u_jD^2}$. Thus, we may concentrate on terms of the form
$$
\tau \left( A_0 [\pa_{Y_{k^0_1}}\cdots \pa_{Y_{k^0_{\beta_0}}}] e^{-u_0D^2} A_1 [\pa_{Y_{k^1_1}}\cdots \pa_{Y_{k^1_{\beta_1}}}] e^{-u_1D^2}\cdots A_N [\pa_{Y_{k^N_1}}\cdots \pa_{Y_{k^N_{\beta_N}}}] e^{-u_ND^2}] \right),
$$
where $A_j$ is a zero-th order pseudodifferential operator, $0\leq \beta_l \leq s$ and $\Sigma \beta_l$ is at most $s$ and is prescribed by the number of derivatives applied to get the operators $A_j$ out of the operators $f_0$ and $[D, f_j]$. Now, apply again  Duhamel's formula:
$$
\pa_Y e^{-uD^2} = -u \int_0^1 e^{-utD^2} \pa_Y (D^2) e^{-u(1-t)D^2} dt = -\int_0^u e^{-tD^2} \pa_Y (D^2) e^{-(u-t)D^2} dt.
$$
For instance,
\begin{multline*}
\int_{\Delta(N)} \tau (A_0 \pa_Y e^{-u_0D^2} A_1 e^{-u_1D^2} \cdots A_Ne^{-u_ND^2}) du_1\cdots du_N \\ =  \int_{\Delta(N+1)} \tau (A_0 e^{-v_0D^2} \pa_Y (D^2) e^{-v_1D^2} A_1 e^{-v_2D^2} \cdots A_Ne^{-v_{N+1}D^2}) dv_1\cdots dv_{N+1}
\\ =  \left< A_0, \pa_Y (D^2), \sigma A_1, \cdots , \sigma A_N\right>_{N+1}
\end{multline*}
So, the norm of this term can be estimated using Lemma \ref{p<N} and Lemma \ref{alphaq}. More pecisely, we get for any $\ep\in ]0,1/2]$ (one can take here $\ep=1/2$ for simplicity)
\begin{multline*}
\| \left< A_0, \pa_Y (D^2), A_1, \cdots , A_N\right>_{N+1} \| \\ \leq \frac{\pi \|e^{-(1-\ep)D^2}\|_1} {\ep N!} \|(I+D^2)^{-1/2} \pa_Y(D^2) (I+D^2)^{-1/2}\| \Pi_{i=0}^N \|A_i\|\\ \leq \frac{\pi   \alpha_1(D^2) \|\tau(e^{-(1-\ep)D^2})\|} {\ep N!} \Pi_{i=0}^N \|A_i\|
\end{multline*}
The other, more complicated, terms are dealt with in a similar way using Lemma \ref{p<N} and the method of Step III where we show how to establish the bound:
$$
\| \tau_\sigma \left( A_0 [\pa_{Y_{k^0_1}}\cdots \pa_{Y_{k^0_{\beta_0}}}] e^{-u_0D^2} A_1 [\pa_{Y_{k^1_1}}\cdots \pa_{Y_{k^1_{\beta_1}}}] e^{-u_1D^2}\cdots A_N [\pa_{Y_{k^N_1}}\cdots \pa_{Y_{k^N_{\beta_N}}}] e^{-u_ND^2}] \right)\|  \leq \frac{C (\ep) }{N!} \Pi_{i=0}^N \|A_i\|.
$$
Again, the operators  $A_j$ are here derivatives of $f_0$ or $[D,f_j]$'s. \\

\underline{Step II: general $m$}\\
The general case involves differential forms on the base manifold $B$ obtained from commutators of functions with $\nabla$, commutators of $D$ with $\nabla$ and also with  the curvature $\nabla^2$. The latter 
presents some difficulties. The commutators of functions with $\nabla$ are easy to handle, as they give zero-th order differential operators and can be estimated using the H\"older inequality again and Lemma \ref{normS}. On the other hand, commutators of $D$ with $\nabla$ and terms involving $\nabla^2$ introduce additional derivatives in the fiberwise direction, as these operators are first order fiberwise differential operators with coefficients in differential forms of degree $1$ for the first and degree $2$ for the second. So, we cannot apply directly the argument of Lemma \ref{p<N} and we need to give careful estimates for such terms. The worst situation arises when the entries $B_0, \cdots , B_N$ in the expression $<B_0, \cdots , B_N>$ are composed of `too many' fiberwise pseudodifferential operators of positive orders. By this we mean that, in addition to the $B_j$'s, we have the maximum number of commutators $[\nabla, D]$ or $[\nabla^2,D]$ and also the maximum number of directional derivatives of the heat kernel $e^{-u_jD^2}$. The latter introduce operators of order $2$.  Fortunately, and this seems to be a crucial point here,  the base manifold is finite dimensional and we are taking at most $\ell$ directional derivatives. Hence the number of entries involving $[\nabla, D]$ or $[\nabla^2, D]$ is limited by the dimension and the number of derivatives is also bounded by $\ell$.

Denote by $k$ the degree of the differential form 
$$
\phi_{\alpha, n}^m (f_0, \cdots, f_{n_0}, g_1^{\alpha_1^{(1)}}, \cdots, g_m^{\alpha_m^{(1)}}, \cdots, f_{\sum_{i=0}^m n_i}).
$$
So, $k= m+ \sum_{i=1}^m \alpha_i^{(3)}$. We fix vector fields $Z_1, \cdots, Z_k$ on the base manifold $B$ with norms $\leq 1$ and thus need to estimate, for $s\leq \ell$, the $s$ seminorm of the function $i_{Z_1}\cdots i_{ Z_k} \phi_{\alpha, n}^m ( \cdots)$. This reduces to the computation of the $s$ seminorm of terms $<A_0, \cdots,  A_{N+q+q'}>$ 
where $N$ entries $A_j$ are zero-th order, $q$ entries are first order and $q'$ entries are second order, and where as explained above, we can assume that $N$ is as large as allowed, while $q$ and $q'$ are bounded by $\sup(\ell, \dim B)$. Indeed, for $N\geq q+q'$, we obtain the desired estimate as follows. 

First to illustrate the ideas assume that the order is as follows:
$$
< A_0, B_1, A_1, \cdots, B_q, A_q, C_{1}, A_{q+1}, \cdots, C_{q'}, A_{q+q'}, A_{q+q'+1}, \cdots, A_N>
$$
where the $A_j$' are zero-th order, the $B_j$'s are first order and the $C_j$' are second order. By using the H\"older inequality, we reduce to the
issue of  estimating the  expression 
$$
\| e^{-uD^2} E e^{-vD^2} A\|_{1/(u+v)} \leq \|e^{-u D^2} E (I+D^2)^{-1/2}\|_{1/u} \|(I+D^2)^{1/2} e^{-vD^2} A\|_{1/v}.
$$
where $E$ is at most second order. This gives for any $\ep\in ]0,1/2]$
$$
\| e^{-uD^2} E e^{-vD^2} A\|_{1/(u+v)}$$
$$ \leq   \|e^{-u \ep D^2} (I+D^2)^{1/2}\| \|(I+D^2)^{-1/2} E (I+D^2)^{-1/2}\|  \times \|(I+D^2)^{1/2} e^{-v\ep D^2} A\| \sup_b \tau(e^{-(1-\ep)D_b^2})^{u+v}.
$$
Now, again we have by the spectral theorem
$$
\|e^{-u \ep D^2} (I+D^2)^{1/2}\| \leq \frac{e^{\ep -1/2}}{\sqrt{2u\ep}},
$$
and the estimate goes exactly as for the previous simpler cases. We thus obtain the existence of a constant $C(\ep)\geq 0$ such that 
\begin{multline*}
\|< A_0, B_1, A_1, \cdots, B_q, A_q, C_{1}, A_{q+1}, \cdots, C_{q'}, A_{q+q'}, A_{q+q'+1}, \cdots, A_N>\| \\ \leq  \frac{C(\ep)}{N!} \Pi_{i=0}^N \|A_i\| \Pi_{i=1}^q \|(I+D^2)^{-1/2} B_i\| \Pi_{i=1}^{q'} \|(I+D^2)^{-1/2} C_i (I+D^2)^{-1/2}\|.
\end{multline*}

\underline{Step III: the worst case}\\
To indicate how to handle general terms we now consider
$$
< A_0, A_1, \cdots, A_k, A_{k+1}, \cdots, A_{r}, A_{r+1}, \cdots, A_N>
$$
where $A_1, \cdots, A_k$ are of order two, the next $r-k$  are of order one
and the last $N-r+1$ are of order zero. We may assume in this expression that $N$ is chosen so that
$k+r \leq (N-1)/2=N^\prime$.
The integrand of the JLO type functional in this instance may be written as
\begin{multline*}
\prod_{j=0}^k [(I+D^2)^{j/2}A_j(I+D^2)^{-j/2-1}(I+D^2)^{1/2}e^{-u_jD^2}]\\
\times\prod_{j=k+1}^r[(I+D^2)^{(k+1)/2}A_j (I+D^2)^{-(k+1)/2 -1/2}(I+D^2)^{1/2} e^{-u_jD^2}]\\
\ \ \ \ \ \ \ \ \ \ \ \ \ \  \times \prod_{i=0}^{r+k} (I+D^2)^{(k+1-i)/2} A_{r+i+1}(I+D^2)^{-(k+1-i)/2}(I+D^2)^{1/2}e^{-u_{r+i+i}D^2}\\
\times A_{r+2+k}e^{-u_{r+k+2}D^2}\cdots A_Ne^{-u_ND^2}
\end{multline*}
{{Now we may integrate over the simplex and estimate the norm of the resulting expression and we find that it is bounded by
\begin{multline*}
\prod_{j=0}^N \|A_j(I+D^2)^{-\alpha_j/2}\|
\|\tau(e^{-(1-\epsilon)D^2}\|\int_{\Delta(N)}\prod_{j=0}^{r+k+1}(2\epsilon u_j)^{-1/2} du_1\cdots du_N\\
= \|\tau(e^{-(1-\epsilon)D^2}\|(2\epsilon)^{-(r+k)/2+1}\prod_{j=0}^N
\|A_j(I+D^2)^{-\alpha_j/2}\|\int_{\Delta(N)}(u_0\cdots u_{r+k+1})^{-1/2}du_1\cdots du_N
\\= \|\tau(e^{-(1-\epsilon)D^2}\|(2\epsilon)^{-(r+k)/2+1}\prod_{j=0}^N
\|A_j(I+D^2)^{-\alpha_j/2}\|\frac{\pi^{(r+k+2)/2}}{\Gamma(\frac{r+k}{2}+1)(N-(r+k))!}\beta((r+k)/2+1, N-(r+k+1))
\end{multline*}
where we have used
$$\int_{\Delta(N)}(u_0\cdots u_{\ell})^{-1/2}du_1\cdots du_N=
\frac{\pi^{(\ell+1)/2}}{\Gamma(\frac{\ell}{2}+1)\Gamma(N-\ell)}\beta((\ell+1)/2, N-\ell).$$
We can assume, to simplify the evaluation of the beta function w.l.o.g., that $r+k$
is even say $2\gamma$.
Then, using the expression for the beta function in terms of gamma functions, the previous expression is bounded by
$$
\left(\frac{\pi}{2\epsilon}\right)^{\gamma+1}\|\tau(e^{-(1-\epsilon)D^2})\|\times\prod_{j=0}^N\|A_j(1+D^2)^{-\alpha_j/2}\|\frac{\gamma!}{(N-(\alpha +1))!}
$$
Now $\gamma\leq N'$, and $N-(\gamma +1)-\gamma\geq N'$
so that we can estimate the ratios of gamma functions and bound the preceding expression by
$$
\left(\frac{\pi}{2\epsilon}\right)^{\gamma+1}\frac{\|\tau(e^{-(1-\epsilon)D^2})\|} {(N-1) ! (N-\gamma)(N-\gamma-1)}\times\prod_{j=0}^N
\|A_j(1+D^2)^{-\alpha_j/2}\|
$$
This estimate obviously suffices to  deduce the allowed estimate.}}
\end{proof}

{{Now, in order to deduce the proof of Theorem \ref{JLO}, we point out that $\psi_N$ is a sum of at most $(\dim B)^{N+1}\times 2^{\dim B}$ components $\phi_{\alpha, n}^m$. Therefore, using the Stirling estimate, it is easy to deduce the existence of a constant $C(S)$ depending only on the bounded set $S$ of $(C^\infty(M), \Sigma_{\ell+1})$ such that
$$
p_r \left(\psi_N (f_0, \cdots, f_N)\right) \leq \frac{C(S)}{[N/2]!}, \quad 0\leq r\leq \ell.
$$
}}

\subsection{More general superconnections and transgression}

We show in this subsection how to extend Theorem \ref{JLO} to more general superconnections associated with the odd operator $\sigma D$ and prove that the entire bivariant cyclic homology class does not depend on certain choices made in the course of the argument. Since the techniques are classical, we shall be brief. More precisely, we consider superconnections $\A$ given as
$$
\A := \B_\sigma + A\text{ where } A\text{ is an odd element of } \Psi^0(M|B, E; \Lambda^*B)[\sigma],
$$
whose {{differential form degrees are positive}}. Recall that $\B_\sigma = \sigma D + \nabla$ so that $\A= \sigma D + \nabla + A$. 
Given such a superconnection $\A$, we can write $\A^2= {D}^2 + X'$ where $X'=\nabla^2 + A^2 +  [ \nabla, \sigma D + A]$ has only positive degree forms and is therefore nilpotent.

We define the heat kernel $e^{-\A^2}$ of the superconnection $\A$ by the usual finite Duhamel expansion where we simply replace the operator $X$ by $X'$:
$$
e^{-\A^2} := \sum_{m\geq 0}  \int_{\Delta (m)} e^{- v_0 {D}^2} X' e^{- v_1 {D}^2} \cdots X' e^{- v_m {D}^2} dv_1 \cdots dv_m.
$$
where $\Delta(m) =\{(u_0, \cdots, u_m) \in \R^{m+1},  \sum u_j =1\}$ is again the $m$-simplex. 
As with the superconnection $\B_\sigma$ which corresponds to $A=0$, we define for any odd integer $n$:
$$
\psi_n (f_0, \cdots, f_n) := \left<\left< f_0, [\A, f_1], \cdots, [\A, f_n]  \right> \right>_\A
$$
\begin{proposition}
Given a superconnection $\A$ associated with  $\sigma D$ as above, the cochains $(\psi_n)_n$ form, for all $\ell\geq 0$, an $\ell$-entire bivariant cochain $\JLO(\A)$ from the universal graded algebra of $C^\infty (M)$ to the graded algebra $\Omega^*(B)$ of differential forms on $B$,  in the sense of Definition \ref {entiremorph} and hence $\JLO(\A)$ is also entire.
\end{proposition}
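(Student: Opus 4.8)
The plan is to run, essentially verbatim, the argument that proved Theorem~\ref{JLO}, checking only that the new terms produced by $A$ do not affect either the combinatorics or the analytic estimates. As in that proof one writes $\A^2 = D^2 + X'$ with $X' = \nabla^2 + A^2 + \sigma[\nabla,D] + [\nabla,A]$, replaces each heat factor $e^{-u_j\A^2}$ by its finite Duhamel expansion (finite because $X'$ has only positive form degree, hence is nilpotent), expands every commutator $[\A,f] = \sigma[D,f] + [\nabla,f] + [A,f]$, and thereby rewrites $\psi_N(f_0,\dots,f_N)$ as a finite signed sum of bihomogeneous cochains $\phi^m_{\alpha,n}$ of exactly the shape used in the proof of Theorem~\ref{JLO}; the only change is that the index set $\maJ$ of admissible ``types'' grows from three to a fixed finite number, so as to record the new insertions $A^2$ and $[\nabla,A]$ (coming from $X'$) and $[A,f]$ (coming from $[\A,f]$).

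Next I would record the estimates for these new entries, which are no worse than those already handled. The operators $A$, $A^2$ and $[\nabla,A]$ are fiberwise pseudodifferential operators of order $\le 0$ with coefficients in forms of strictly positive degree, and they are \emph{fixed}, i.e. independent of $f_0,\dots,f_N$; hence, by the same local-coordinate arguments as in Lemmas~\ref{alphaq} and~\ref{EstimateNabla2}, their iterated derivatives $\pa_{\tilde{Y}_1}\cdots\pa_{\tilde{Y}_r}$ along horizontal lifts of vector fields from $\mathcal Y$ are again of order $\le 0$, with uniform norms bounded by constants depending only on $\A$, $\nabla$, $D$ and the compact base $B$, not on $N$. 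For $[A,f]$ one observes that the commutator of the zeroth order operator $A$ with multiplication by $f$ is a fiberwise pseudodifferential operator of order $-1$, and the computation of Lemma~\ref{normS} gives $\|[\pa_{\tilde{Y}_s}\circ\cdots\circ\pa_{\tilde{Y}_1}]([A,f])\|\le C'_s\,\|f\|_{s+1}$, so the $C^{\ell+1}$ topology on $C^\infty(M)$ is still the correct one.

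The remaining steps then proceed exactly as Steps I--III in the proof of Theorem~\ref{JLO}. Since $X'$, $[A,f]$ and $[\nabla,f]$ all carry strictly positive form degree and $\dim B<\infty$, any non-vanishing $\phi^m_{\alpha,n}$ contains at most $\dim B$ such insertions, so $\psi_N$ is a sum of at most $C_B^{\,N+1}$ bihomogeneous components with $C_B$ depending only on $\dim B$ and the cardinality of $\maJ$. In each component the positive order entries are the $\le \dim B$ factors $\nabla^2$ and $\sigma[\nabla,D]$, together with the $\le \ell$ operators $\pa_{\tilde Y}(D^2)$ obtained by differentiating heat factors; all other entries, including every $A$-derived one, are of order $\le 0$. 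Sandwiching the positive order entries between powers of $(I+D^2)^{1/2}$, using the spectral bound $\|(I+D^2)^{1/2}e^{-\alpha D^2}\|\le e^{\alpha-1/2}/\sqrt{2\alpha}$, applying H\"older's inequality fiberwise and the simplex integrals of Lemmas~\ref{p=N} and~\ref{p<N}, one gets a bound $C(\ep)^{N+1}/N!$ per component; multiplying by the $C_B^{\,N+1}$ count and invoking Stirling yields $p_r(\psi_N(f_0,\dots,f_N))\le C(S)/[N/2]!$ for $0\le r\le\ell$ and $f_j$ in any bounded set $S$ of $(C^\infty(M),\Sigma_{\ell+1})$. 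By Definition~\ref{entiremorph} this says precisely that $\JLO(\A)$ is an $\ell$-entire bivariant cochain, and since $\ell$ is arbitrary it is entire; the cocycle identity $\JLO(\A)\circ(b+B)+d_B\circ\JLO(\A)=0$, if wanted, follows by the argument of Lemma~\ref{AlgCocycle}, all of whose ingredients (Lemmas~\ref{Duhamel}, \ref{CommD}, \ref{CommNabla} and \ref{relations}) carry over to $\A$.

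I do not expect a genuine obstacle. The one point that must be respected is the hypothesis that $A$ has strictly positive form degree: this is what makes $X'$ nilpotent and the Duhamel sums finite, and if $A$ had a scalar component then $e^{-\A^2}$ would be an honest infinite perturbation of $e^{-D^2}$ and the estimates would need more care. Within the stated hypothesis the work is bookkeeping; the only mildly delicate point is confirming that the number of positive-form-degree insertions — and hence the number of bihomogeneous components — stays bounded by a quantity of the form $C_B^{\,N+1}$, independent of the derivative order $r\le\ell$.
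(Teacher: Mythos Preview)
Your overall strategy is the paper's own: rerun the proof of Theorem~\ref{JLO} with the extra terms coming from $A$. But your expansion of $X'=\A^2-D^2$ is incomplete, and the missing piece is exactly the one that matters. From $\A=\B_\sigma+A$ one gets $\A^2=\B_\sigma^2+[\B_\sigma,A]+A^2$, hence
\[
X' \;=\; \nabla^2 \;-\; \sigma[\nabla,D] \;+\; [\nabla,A] \;+\; [\sigma D,A] \;+\; A^2.
\]
You have dropped the term $[\sigma D,A]$ (equivalently $-\sigma[D,A]$). This is a fiberwise \emph{first}-order operator with coefficients in positive degree forms, so your later assertion that ``all other entries, including every $A$-derived one, are of order $\le 0$'' is false. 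In the paper's proof this is precisely the one new term singled out as needing work: it is controlled by a direct analogue of Lemma~\ref{EstimateNabla2}, namely
\[
\sup_{\|Z_1\|\le 1,\dots,\|Z_k\|\le 1,\;Y_1,\dots,Y_q\in\mathcal Y}\bigl\|(I+D^2)^{-1/2}\,[\pa_{Y_1}\cdots\pa_{Y_q}]\bigl(i_{Z_1\wedge\cdots\wedge Z_k}[D,A_{[k]}]\bigr)\bigr\|<\infty,
\]
which puts $[\sigma D,A]$ on the same footing as $\sigma[\nabla,D]$ and $\nabla^2$ in Steps~II--III.

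So the fix is small but essential: add $[\sigma D,A]$ to the enlarged index set $\maJ$, note that it is first order with strictly positive form degree (hence appears at most $\dim B$ times), supply the estimate above, and then your Steps~I--III go through as you describe. As written, however, the proposal misses the only genuinely new analytic ingredient.
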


\begin{proof}
Notice first that the algebraic relations proved in  Lemma \ref{relations} (all of them besides  the last one) are still valid with $\A$ replacing $\B_\sigma$. Hence, the collection $(\psi_n)_n$ is again a bivariant cyclic cocycle by exactly the same proof as for Lemma \ref{AlgCocycle}. The proof of boundedness is a rephrasing of the proof of \ref{JLO}. The only difference is that we have to deal with new terms involving $A^2+ [\B_\sigma, A]$.  The term $A^2 + [\nabla, A]$ is a zero-th order fiberwise pseudodifferential operator with coefficients in positive degree forms and causes no trouble.  We only have to explain how to estimate terms involving $-\sigma [D, A]= -\sigma \sum_{k>0} [D, A_{[k]}]$, where $A_{[k]}$ is the component of $A$ which increases the form degree by $k$. But this is done using the following modification of Lemma \ref{EstimateNabla2} and which is proved in the same way by reducing to local coordinates:
$$
{{\sup_{\|Z_1\|\leq 1, \cdots , \|Z_k\|\leq 1, Y_1, \cdots , Y_q\in {\mathcal Y}}}} \|(I+D^2)^{-1/2} [\pa_{Y_1} \cdots \pa _{Y_q}] (i_{Z_1\wedge\cdots  Z_k} [D, A_{[k]}]) \| = \beta'_q < +\infty,
$$
We omit the proof here. Then the rest of the proof is tedious but is exactly a rephrasing of the proof given in the previous subsection. 
\end{proof}

\begin{remark}
We show in Theorem \ref{cohomologous} that the $\ell$-entire cohomology class of $\JLO(\A)$ coincides with the $\ell$-entire cohomology class of $\JLO (\B_{\sigma})$.
\end{remark}

We now proceed to prove the main result of this subsection, namely the transgression formula for our JLO entire bivariant cocycle. We follow the method adopted in \cite{GetzlerSzenes}. 
Set, for any superconnection $\A$ associated with $\sigma D$ as above, and with $V$  a homogeneous fiberwise pseudodifferential operator  with coefficients in differential forms on the base $B$:
$$
\Ch (\A, V) (f_0, \cdots, f_n) := \sum_{i=0}^n (-1)^{i|V|} \left<\left< f_0, [\A, f_1], \cdots, [\A, f_i], V, [\A, f_{i+1}], \cdots , [\A, f_n]\right>\right>_{\A},
$$
and
$$
\alpha^* (\A, V) (f_0, \cdots , f_n) := \sum_{i=1}^n (-1)^{(i-1)(|V|+1)} \left<\left< f_0, [\A, f_1], \cdots, [V, f_i], \cdots, [\A, f_n]\right>\right>_{\A}.
$$

\begin{lemma}
\begin{itemize}
 \item Assume that the pseudodifferential order of $V$ is $\leq 1$, then $\Ch (\A, V)$ is an $\ell$-entire bivariant cochain, for any $\ell\geq 0$.
\item Assume that the pseudodifferential order of $V$ is $\leq 0$, then $\alpha^* (\A, V)$ is
an $\ell$-entire bivariant cochain, for any $\ell\geq 0$.
\end{itemize}
\end{lemma}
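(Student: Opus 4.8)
The plan is to obtain both assertions as rephrasings of the estimates that prove Theorem \ref{JLO} (and of the Proposition on the entirety of $\JLO(\A)$ just above), since inserting one extra entry into the generalized JLO functional changes almost nothing; note that, in the sense of Definition \ref{entiremorph}, only boundedness has to be checked here, the statement being about cochains and not cocycles. First I would expand $\Ch(\A,V)$ and $\alpha^*(\A,V)$ into bihomogeneous components exactly as $\psi_N$ was expanded: replace each $e^{-u\A^2}$ by its finite Duhamel sum in $X'=\nabla^2+A^2+[\nabla,\sigma D+A]$ and each $[\A,f_i]$ by $[\nabla,f_i]-\sigma[D,f_i]+[A,f_i]$. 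Up to the harmless polynomial-in-$N$ factor coming from the sum over the $N+1$ insertion positions of $V$ (resp.\ of $[V,f_i]$), this writes each cochain as a finite algebraic sum — of cardinality still $\le(\dim B)^{N+1}2^{\dim B}$ — of generalized functionals $\langle B_0,\cdots,B_{N'}\rangle$ of the type estimated in the proof of Theorem \ref{JLO}, in which $V$ (resp.\ a single commutator $[V,f_i]$) occurs exactly once, all other genuine entries being of pseudodifferential order $\le0$ and controlled in uniform norm by Lemma \ref{normS}. Then, after applying at most $\ell$ base-directional derivatives $d_Y$ and at most $\dim B$ contractions $i_Z$ and commuting them through $\tau_\sigma$ via the closedness relation $d_Y\circ\tau_\sigma=\tau_\sigma\circ\pa_Y$, I would be exactly in the situation of Step III of that proof: a functional $\langle A_0,\cdots,A_{N+q+q'}\rangle$ with at most $q+q'$ entries of positive pseudodifferential order, where $q+q'$ is bounded independently of $N$.

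For the first item ($V$ of order $\le1$) the only new feature is the single entry $V$; I would factor $V=\big(V(I+D^2)^{-1/2}\big)(I+D^2)^{1/2}$ and absorb $(I+D^2)^{1/2}$ into the adjacent heat operator, using the spectral bound $\|(I+D^2)^{1/2}e^{-\alpha D^2}\|\le e^{\alpha-1/2}/\sqrt{2\alpha}$ of Lemmas \ref{p=N}--\ref{p<N}. This costs one additional factor $u_j^{-1/2}$ inside the simplex integral, which the $\Gamma$-function estimate of Step III already accommodates (it handles any bounded number of such factors) while keeping the factorial gain. The ingredient to be added is the uniform estimate
$$
\sup_{Y_1,\cdots,Y_r\in{\mathcal Y},\ \|Z_j\|\le1}\ \big\|(I+D^2)^{-1/2}[\pa_{\tilde Y_1}\cdots\pa_{\tilde Y_r}](i_{\tilde Z_1\wedge\cdots\wedge\tilde Z_k}V)\big\|<+\infty,
$$
proved exactly like Lemmas \ref{alphaq} and \ref{EstimateNabla2} by a partition of unity on the compact base $B$ and reduction to local coordinates. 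For the second item ($V$ of order $\le0$) the point is that $[V,f_i]$ has order $\le-1$, hence is already a bounded fiberwise operator, and the local-coordinate computation of Lemma \ref{normS} gives $\|[\pa_{\tilde Y_s}\cdots\pa_{\tilde Y_1}]([V,f_i])\|\le C_s\|f_i\|_{s+1}$; thus every genuine entry is of order $\le0$ and $\alpha^*(\A,V)$ has the same structure as $\psi$ itself. In both cases the chain of estimates of Steps I--III of the proof of Theorem \ref{JLO} then applies verbatim, giving $p_r\big(\Ch(\A,V)(f_0,\cdots,f_N)\big)\le C(S)/[N/2]!$ and $p_r\big(\alpha^*(\A,V)(f_0,\cdots,f_N)\big)\le C(S)/[N/2]!$ for $0\le r\le\ell$ after summing the at most $(\dim B)^{N+1}2^{\dim B}$ components and invoking Stirling — which is precisely the boundedness demanded by Definition \ref{entiremorph}.

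The main obstacle, exactly as in the proof of Theorem \ref{JLO} itself, is not any one estimate but the bookkeeping that keeps the number of positive-order insertions into the JLO functional bounded independently of $N$, so that the factor $1/[N/2]!$ survives. Here this is automatic: $V$, or the commutator $[V,f_i]$, is inserted once; the differential forms number at most $\dim B$; the base derivatives number at most $\ell$. The two order hypotheses ($\le1$ for $\Ch$, $\le0$ for $\alpha^*$) are precisely what makes a single conjugation by $(I+D^2)^{\pm1/2}$ enough to turn the relevant entry into a uniformly bounded fiberwise operator; beyond reorganising the Step III computation to carry the one extra $u_j^{-1/2}$ factor I would not expect any genuinely new difficulty.
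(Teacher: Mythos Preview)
Your proposal is correct and takes essentially the same approach as the paper: expand each $e^{-u\A^2}$ by its Duhamel sum, observe that the single extra entry $V$ (respectively $[V,f_i]$) raises the count of positive-order insertions by at most one, and then feed the resulting expression straight into the Step II--III estimates of Theorem \ref{JLO}, the polynomial factor $n+1$ from the insertion sum being absorbed by the $1/n!$ decay. You are in fact more explicit than the paper in two places --- the uniform bound on $(I+D^2)^{-1/2}\pa_{\tilde Y_1}\cdots\pa_{\tilde Y_r}(i_Z V)$ needed when estimating the higher seminorms $p_r$, and the Lemma \ref{normS}-type control of $[\pa_{\tilde Y}\cdots](\,[V,f_i]\,)$ --- but these are exactly the ingredients implicit in the paper's phrase ``the same proof works''.
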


\begin{proof}
{{ We only give the proof for $\A=\B_\sigma$ and leave the general case as an exercise. }}
Let us now prove for instance the first item, the second being easier since $V$ is bounded. Applying the definition of $e^{-u\B_\sigma^2}$  in the expression 
$$
\left<\left< f_0, [\B_\sigma, f_1], \cdots, [\B_\sigma, f_i], V, [\B_\sigma, f_{i+1}], \cdots , [\B_\sigma, f_n]\right>\right>_{\B_\sigma},
$$
we reduce to the estimates of terms of the form{{
\begin{multline*}
< f_0, X, \cdots, X, [\B_\sigma, f_1], X , \cdots, X, \cdots, [\B_\sigma, f_i], X, \cdots, X; V; \\  X, \cdots, X, [\B_\sigma, f_{i+1}], X, \cdots, X, \cdots, [\B_\sigma, f_{n}], X, \cdots, X>
\end{multline*}
}}The point is to apply the argument of  Steps II and III in the proof of Theorem \ref{JLO}, by simply adding one operator of order $1$ in the entries. Recall  that this can be done as long as the number of operators of order $1$ or $2$ is not too big with respect to the number of operators of order $0$. But, notice that $V$ only appears once, the first order operator $X$ has coefficients in differential forms of positive degree only and hence cannot appear more than the dimension of $B$ times. Therefore, since we only need the estimates for $n$ large, the same proof works and we obtain the required estimates exactly as in Steps II and III  of the proof of Theorem \ref{JLO}. Notice that we have to estimate the sum of $n+1$ terms of the form $$
\left<\left< f_0, [\B_\sigma, f_1], \cdots, [\B_\sigma, f_i], V, [\B_\sigma, f_{i+1}], \cdots , [\B_\sigma, f_n]\right>\right>_{\B_\sigma},
$$ 
{{but since the estimate involves $1/n!$, we get  the allowed estimate of the kind $\frac{C(S)}{[n/2]!}$.}}
\end{proof}

\begin{proposition}\label{Identity}
The following identity holds
$$
(d_B + (-1)^{|V|}(b+B)) \Ch(\B_\sigma, V) + \Ch (\B_\sigma, [\B_\sigma, V]) + (-1)^{|V|} \alpha^*(\B_\sigma, V) = 0.
$$
\end{proposition}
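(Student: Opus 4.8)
The identity is purely algebraic: the three cochains appearing, namely $\Ch(\B_\sigma,V)$, $\alpha^*(\B_\sigma,V)$ and $\Ch(\B_\sigma,[\B_\sigma,V])$, have just been shown to be $\ell$-entire for every $\ell\geq 0$, so it suffices to verify the relation evaluated on an arbitrary $(n+1)$-tuple $(f_0,\dots,f_n)\in C^\infty(M)^{n+1}$ using only the algebraic identities of Lemma \ref{relations} (all of which hold with $\B_\sigma$) together with the two graded Jacobi identities
$$
[\B_\sigma^2,f_i]=[\B_\sigma,[\B_\sigma,f_i]],\qquad [\B_\sigma,[V,f_i]]=[[\B_\sigma,V],f_i]+(-1)^{|V|}[V,[\B_\sigma,f_i]].
$$
The plan is to run, essentially verbatim, the computation that establishes the cocycle property in Lemma \ref{AlgCocycle}, but carrying along one extra entry $V$ inserted among the $[\B_\sigma,f_j]$'s and tracking which contributions are modified by its presence.

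First I would compute $B\,\Ch(\B_\sigma,V)_{n+1}(f_0,\dots,f_n)$: applying the second item of Lemma \ref{relations} (insertion of the unit summed over all slots) to each cyclic term and then the cyclicity of the first item collapses it, exactly as in Lemma \ref{AlgCocycle}, to $\sum_i(-1)^{i|V|}\left<\left<[\B_\sigma,f_0],\dots,[\B_\sigma,f_i],V,[\B_\sigma,f_{i+1}],\dots,[\B_\sigma,f_n]\right>\right>_{\B_\sigma}$; here one has only to check that transporting $V$ once around the cycle contributes exactly the overall sign $(-1)^{|V|}$ predicted by the statement. Next, $b\,\Ch(\B_\sigma,V)_{n-1}$ is treated by the fourth item of Lemma \ref{relations}: each merged pair $f_jf_{j+1}$ turns into a $[\B_\sigma^2,f_j]=[\B_\sigma,[\B_\sigma,f_j]]$ entry, \emph{except} when the merge straddles the inserted $V$, in which case one obtains a genuine product $Vf_j$ or $f_jV$, i.e.\ an entry $[V,f_j]$ --- these, together with the wrap-around terms of $b$ and of $B$ that pass $V$ past the end of the string, are precisely the terms of $\alpha^*(\B_\sigma,V)$, with the sign $(-1)^{(i-1)(|V|+1)}$ coming out of the bookkeeping. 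Finally I would feed each functional $\left<\left<f_0,[\B_\sigma,f_1],\dots,[\B_\sigma,f_i],V,[\B_\sigma,f_{i+1}],\dots,[\B_\sigma,f_n]\right>\right>_{\B_\sigma}$ into the third item of Lemma \ref{relations}: the derivation $[\B_\sigma,\cdot]$ applied to the whole string produces a $d_B$ term (which assembles into $d_B\,\Ch(\B_\sigma,V)$), the terms in which it hits an $f_j$ (which cancel against the $[\B_\sigma,[\B_\sigma,f_j]]$ entries produced by $b$, exactly as in Lemma \ref{AlgCocycle}), and the term in which it hits the entry $V$, giving $V$ replaced by $[\B_\sigma,V]$; summed over $i$ with the appropriate sign the latter is $\Ch(\B_\sigma,[\B_\sigma,V])$. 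Collecting all surviving terms yields the asserted identity.

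The only genuine difficulty is the sign chase. The clean way to organise it is to fix the insertion position $i$ of $V$ and split the contributions of $b+B+d_B$ into those in which $V$ is \emph{not} adjacent to the slot being acted upon --- these reproduce, term by term, the cocycle computation of Lemma \ref{AlgCocycle} for that fixed insertion and hence sum to $d_B\Ch(\B_\sigma,V)+(-1)^{|V|}(b+B)\Ch(\B_\sigma,V)$, the sign $(-1)^{|V|}$ being produced by commuting the degree-$|V|$ insertion past the $(b+B)$ differential --- and the \emph{adjacent} corrections, which collect into $\Ch(\B_\sigma,[\B_\sigma,V])$ (from the Bianchi / third-item term) and $(-1)^{|V|}\alpha^*(\B_\sigma,V)$ (from the merging and wrap-around terms). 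The graded signs $\ep_i$ of Lemma \ref{relations} and the factor $(-1)^{i|V|}$ built into the definition of $\Ch$ are exactly what makes these telescoping sums close up. I expect this bookkeeping --- not any analytic point, since entireness of every cochain involved is already in hand --- to be the main obstacle; everything else is a transcription of the Getzler--Szenes computation \cite{GetzlerSzenes} to the bivariant superconnection setting.
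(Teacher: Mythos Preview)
Your proposal is correct and follows essentially the same Getzler--Szenes strategy as the paper: apply items 2, 3 and 4 of Lemma \ref{relations} to each bracket $\left<\left<f_0,[\B_\sigma,f_1],\dots,[\B_\sigma,f_i],V,[\B_\sigma,f_{i+1}],\dots,[\B_\sigma,f_n]\right>\right>_{\B_\sigma}$, then sum over the insertion position $i$ with weight $(-1)^{i|V|}$. The only organisational difference is that the paper starts from item~3 and splits the resulting sum into three pieces $X_1^i,X_2^i,X_3^i$ (plus the $d_B$ term), then identifies $\sum_i X_1^i$ with $(-1)^{|V|}B\Ch$ via item~2, $\sum_i X_2^i$ with $(-1)^{|V|}(b\Ch+\alpha^*)$ via item~4, and $\sum_i X_3^i$ with $\Ch(\B_\sigma,[\B_\sigma,V])$ by inspection; you compute $B\Ch$, $b\Ch$ and the $d_B$ contribution separately and then match, which is the same computation run in the other direction.

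One small imprecision: your phrase ``the merge straddles the inserted $V$, giving an entry $[V,f_j]$'' is not literally how $\alpha^*$ appears. The $b$ operator acts on the $f$-variables, not on the bracket entries, so no merge ever sits across $V$. What actually happens is that the telescoping of item~4, applied to the $[\B_\sigma^2,f_j]$ terms, leaves uncancelled boundary terms at the slots adjacent to $V$ (products $[\B_\sigma,f_i]\,V$, $V\,[\B_\sigma,f_{i+1}]$, $f_i V$, $V f_{i+1}$); after summation over $i$ these residual terms regroup into $b\Ch$ and the $[V,f_i]$ entries of $\alpha^*$. The paper also leaves precisely this identification (``we leave it to the reader to directly compute $\sum_i X_2^i$'') as an exercise, so your level of detail matches theirs.
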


\begin{proof}
For $0\leq i \leq n$, we apply the third relation of Lemma \ref{relations} to the operators
$$
A_0=f_0, A_j=[\B_\sigma, f_j]\text{ for }1\leq j \leq i, A_{i+1}=V \text{ and } A_j= [\B_\sigma, f_{j-1}]\text{ for } j\geq i+2.
$$
So, for $i=0$ for instance, this means that we apply that relation to
$
A_0=f_0, A_1= V $ and $A_j= [\B_\sigma, f_{j-1}]$ for $ j\geq 2$.
For any $i$ this gives us a relation $\theta_i + d_B \theta'_i=0$ where $(-1)^{i|V|}\theta_i=X_1^i+X_2^i+X_3^i =0$ where
$$
X_1^i= (-1)^{i|V|} \left<\left< [\B_\sigma, f_0], \cdots,  [\B_\sigma, f_i], V, [\B_\sigma, f_{i+1}], \cdots, [\B_\sigma, f_n]  \right> \right>
$$
\begin{multline*}
 X_2^i= \sum_{1\leq j\leq i} (-1)^{i|V|+j-1} \left<\left<  f_0, [\B_\sigma, f_1], \cdots, [\B_\sigma^2, f_j], \cdots, [\B_\sigma, f_i], V,  [\B_\sigma, f_{i+1}], \cdots,  [\B_\sigma, f_n]\right> \right> + \\
\sum_{j=i+1}^n (-1)^{j-1+(i+1)|V|} \left<\left<  f_0, [\B_\sigma, f_1], \cdots, [\B_\sigma, f_i], V,  [\B_\sigma, f_{i+1}], \cdots, [\B_\sigma^2, f_j], \cdots, [\B_\sigma, f_n]\right> \right>
\end{multline*}
and
$$
X_3^i=(-1)^{i(|V|+1)} \left<\left<  f_0, [\B_\sigma, f_1], \cdots, [\B_\sigma, f_i], [\B_\sigma, V],  [\B_\sigma, f_{i+1}], \cdots,  [\B_\sigma, f_n]\right> \right>.
$$
Finally, $\theta'_i$ is given by
$$
\theta'_i = \left<\left<  f_0, [\B_\sigma, f_1], \cdots, [\B_\sigma, f_i], V, [\B_\sigma, f_{i+1}], \cdots,  [\B_\sigma, f_n]\right> \right>.
$$
Thus the expression $\sum_{i=0}^n (-1)^{i|V|} (\theta_i + d_B \theta'_i) = 0$ allows us to write
$$
\sum_{i=0}^n X_1^i + \sum_{i=0}^n X_2^i + \sum_{i=0}^n X_3^i + d_B \Ch (\B_\sigma, V) =0.
$$
Now we have,  by inspection, the relation
$$
\sum_{i=0}^n X_3^i = \Ch(\B_\sigma, [\B_\sigma, V]) (f_0, \cdots, f_n).
$$
Similarly, we leave it to the reader to directly compute $\sum_{i=0}^n X_2^i$. One finds
{{$$
\sum_{i=0}^n X_2^i = (-1)^{|V|} [b\Ch(\B_\sigma, V) + \alpha^* (\B_\sigma, V)] (f_0, \cdots, f_n).
$$}}
Next, using the second relation of Lemma \ref{relations}, we see that
$$
B \Ch(\B_\sigma, V) (f_0, \cdots, f_n) = \sum_{i=0}^n (-1)^{(i+1)|V|} \left<\left<  [\B_\sigma, f_0], \cdots, [\B_\sigma, f_i], V, [\B_\sigma, f_{i+1}], \cdots,  [\B_\sigma, f_n]\right> \right>.
$$
The conclusion follows immediately.
\end{proof}

We are now in position to prove 

\begin{theorem}\label{cohomologous}
Let $\A$ be, as before, the  superconnection $\A := \B_\sigma + A$. Then the JLO  $\ell$-entire cocycle $\JLO(\A)$ associated with the superconnection $\B_\sigma +A$ is cohomologuous to the JLO $\ell$-entire cocycle $\JLO(\B_\sigma)$ associated with the superconnection $\B_\sigma = \nabla + \sigma D$. 
\end{theorem}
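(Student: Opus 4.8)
The plan is to run a transgression argument along the affine path of superconnections $\A_t := \B_\sigma + tA$, $t\in[0,1]$, which interpolates between $\B_\sigma$ at $t=0$ and $\A$ at $t=1$. For each $t$, $tA$ is again an odd element of $\Psi^0(M|B,E;\Lambda^*B)[\sigma]$ of positive form degrees, so the preceding proposition shows that $\JLO(\A_t)$ is an $\ell$-entire bivariant cocycle, and inspection of its proof shows the relevant estimates are uniform for $t$ in the compact interval $[0,1]$. Since $A$ has pseudodifferential order $\leq 0 \leq 1$, the preceding lemma gives that $\Ch(\A_t, A)$ is an $\ell$-entire bivariant cochain, again with bounds uniform in $t$. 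The candidate transgression cochain is then
$$
\eta := \int_0^1 \Ch(\A_t, A)\, dt,
$$
which is a well-defined $\ell$-entire bivariant cochain from $\Omega C^\infty(M)$ to $\Omega^*(B)$.

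The core step is the computation of $\frac{d}{dt}\JLO(\A_t)$. Writing $\psi_n^t(f_0,\dots,f_n) = \left\langle\left\langle f_0, [\A_t,f_1],\dots,[\A_t,f_n]\right\rangle\right\rangle_{\A_t}$ and differentiating in $t$, two families of terms appear. Differentiating each factor $[\A_t, f_i] = [\B_\sigma,f_i] + t[A,f_i]$ produces $[A,f_i]$, and collecting these contributions gives, up to the sign prescribed by the $\sigma$- and form-degree conventions, $\alpha^*(\A_t, A)$. Differentiating the heat semigroups hidden in the subscript $\A_t$ is governed by the last identity of Lemma \ref{relations} (applied with family parameter $t$, which is legitimate by Lemma \ref{DuhamelDer} and the finiteness of the perturbative Duhamel sums) and yields $-\Ch(\A_t, [\A_t, A])$. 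Hence, up to the overall sign conventions,
$$
\frac{d}{dt}\JLO(\A_t) = \alpha^*(\A_t, A) - \Ch(\A_t, [\A_t, A]).
$$

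Next I apply Proposition \ref{Identity}, which holds verbatim with $\A_t$ in place of $\B_\sigma$ since its proof uses only the algebraic relations of Lemma \ref{relations} (valid for $\A_t$), taken with $V=A$ and $|V|=|A|$ odd. It then reads $(d_B - (b+B))\Ch(\A_t,A) + \Ch(\A_t,[\A_t,A]) - \alpha^*(\A_t,A) = 0$, so the right-hand side of the displayed derivative equals $(d_B - (b+B))\Ch(\A_t, A)$. Integrating over $t\in[0,1]$, and using that $d_B$, $b$, $B$ are bounded and therefore commute with the integral, we obtain
$$
\JLO(\A) - \JLO(\B_\sigma) = \big(d_B - (b+B)\big)\,\eta,
$$
which exhibits $\JLO(\A)$ and $\JLO(\B_\sigma)$ as cohomologous $\ell$-entire bivariant cocycles, with explicit transgression cochain $\eta$; since this holds for every $\ell\geq 0$ the entire (Fréchet) statement follows as well.

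The main obstacle is not the algebra — Lemma \ref{relations} and Proposition \ref{Identity} do the bookkeeping — but two analytic points: first, justifying differentiation under the simplex integral defining $\left\langle\left\langle\cdots\right\rangle\right\rangle_{\A_t}$ and the interchange of $\int_0^1$ with the bornological structure, which reduces to the uniformity in $t\in[0,1]$ of the estimates in the proof of Theorem \ref{JLO} and of the preceding lemma (so that $\eta$ is genuinely $\ell$-entire, not merely a formal symbol); and second, the careful tracking of the signs coming from the Clifford variable $\sigma$, the form degrees, and the $[n/2]!$ normalisation, which I have suppressed above and which must be made to match the conventions of Proposition \ref{Identity}.
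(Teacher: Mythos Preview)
Your proposal is correct and follows essentially the same route as the paper: differentiate $\JLO(\A_t)$ along the affine path $\A_t=\B_\sigma+tA$ using the fifth identity of Lemma~\ref{relations} together with the product rule on the commutators $[\A_t,f_i]$, identify the result as $-\Ch(\A_t,[\A_t,A])+\alpha^*(\A_t,A)$, and then invoke Proposition~\ref{Identity} with $|A|$ odd to rewrite this as $(d_B-(b+B))\Ch(\A_t,A)$. Your version is in fact slightly more explicit than the paper's: you write down the transgression cochain $\eta=\int_0^1\Ch(\A_t,A)\,dt$ and flag the uniformity-in-$t$ of the estimates needed for $\eta$ to be $\ell$-entire, points the paper leaves implicit.
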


\begin{proof}
Let $\B_{\sigma, s} := \B_\sigma + sA$ be the smooth linear path of superconnections associated with $\sigma D$. Then we can write, using the fifth relation of Lemma \ref{relations}:
\begin{multline*}
 \frac{d}{ds} \left<\left< f_0, [\B_{\sigma,s}, f_1], \cdots ,  [\B_{\sigma,s}, f_n]   \right>\right>_{\B_{\sigma, s}} = \\
-\sum_{i=0}^n  \left<\left< f_0, [\B_{\sigma,s}, f_1], \cdots ,  [\B_{\sigma,s}, f_i], [\B_{\sigma,s}, A],   [\B_{\sigma,s}, f_{i+1}], \cdots, [\B_{\sigma,s}, f_n] \right>\right>_{\B_{\sigma, s}} + \\ 
\sum_{i=1}^n \left<\left< f_0, [\B_{\sigma,s}, f_1], \cdots [\B_{\sigma,s}, f_{i-1}],  [A, f_i],  \cdots, [\B_{\sigma,s}, f_n] \right>\right>_{\B_{\sigma, s}}
\end{multline*}
Notice that $|A|=1$ while $|[\B_{\sigma, s}, A]| = 2$. Hence we obtain
$$
\frac{d}{ds} \left<\left< f_0, [\B_{\sigma,s}, f_1], \cdots ,  [\B_{\sigma,s}, f_n]   \right>\right>_{\B_{\sigma, s}} = 
-\Ch (\B_{\sigma, s}, [\B_{\sigma, s}, A]) (f_0, \cdots, f_n) + \alpha^* (\B_{\sigma,s}, A) (f_0, \cdots, f_n).
$$
But we know from Proposition \ref{Identity} that
$$
-\Ch (\B_{\sigma, s}, [\B_{\sigma, s}, A])  + \alpha^* (\B_{\sigma,s}, A) = [d_B - (b+B)] \Ch(\B_{\sigma, s}, A),
$$
which completes the proof since $\Ch(\B_{\sigma, s}, A)$ is an even $\ell$-entire cochain.
\end{proof}

\section{Compatibility with the higher spectral flow}

\subsection{Higher spectral flow}
An application of our entire JLO cocycle comes from its relation with the higher spectral flow. Using our previous results we explain in this Section  how to prove the equality between the Chern character of the higher spectral flow and the corresponding JLO pairing, which is well defined in the Fr\'echet topologies. 
Higher spectral flow, introduced in \cite{DaiZhang} {{(see also \cite{LP})}} for a family of fiberwise self-adjoint elliptic operators $D=(D_b)_{ b\in B}$, is only well defined under the assumption that the $K^1$ class defined by the family is trivial. We assume this from now on. Our proof that the Chern character of higher spectral flow coincides with the pairing with our entire  bivariant JLO cocycle is a  generalization of Getzler's proof in the case of a single operator \cite{GetzlerOdd}. Recall that the fiberwise generalized Dirac operator $D$ defines a class $[D]$ in the Kasparov group $KK^1(M,B)$ \cite{Kasparov}, and hence using the Kasparov product, a homomophism $K^1(M) \to K^0(B)$ which assigns to $U\in K^1(M)$ the class $U\cap [D]$. This Kasparov product is an index map which is described below using either families of Toeplitz operators or the notion of higher spectral flow. 
Denoting by $E$ the entire cyclic homology we prove in the present Section commutativity of the following diagram:

\vspace{0.1in}
\begin{picture}(415,90)

\put(115,75){$K^1(M)$}
\put(125,65){ $\vector(0,-1){35}$}
\put(100,48){$\Ch $}

\put(185,85){$\SF(D, \cdot)$}
\put(170,79){$\vector(1,0){60}$}

\put(185,25){$\JLO(D)$}
\put(170,19){$\vector(1,0){60}$}

\put(235,75){$K^0(B)$}
\put(255,65){ $\vector(0,-1){35}$}
\put(270,48){$\ch$}

\put(95,15){$HE_1 (C^\infty(M))$}

\put(235,15){$H^{even} (B, \C)$}
\end{picture} 

Thus the $\ell$-entire cyclic cohomology class $\JLO(D)$ is precisely the bivariant Chern-Connes character of $[D]$.
Combining our result with the main result of \cite{DaiZhang}, we deduce that $\JLO(D)$ coincides up to the (obviously bounded) Hochschild-Kostant-Rosenberg-Connes (HKRC) map for $M$, with the topological map $H^{odd}(M) \to H^{even} (B)$ given up to constant by
$$
\omega \longmapsto \int_{M/B} \omega\wedge {\hat A} (TM|B).
$$
By using the results of \cite{MelrosePiazza}, we know that the $K^1$ index of $D=(D_b)_{b\in B}$ is zero if and only if there exists a (smooth) spectral section $P$ for $D$, that is, a smooth family of self-adjoint fiberwise pseudodifferential projections $P=(P_b)_{b\in B}$ acting on the $L^2$-sections such that for some smooth non-negative function $\varrho$ on $B$,
$$
P_b  1_{]\varrho (b), +\infty)} (D_b) = P_b \text{ and } P_b 1_{]-\infty, -\varrho (b)[} (D_b) = 0, \quad \forall b\in B.
$$ 
The following result is  taken from \cite{MelrosePiazza}.
\begin{proposition}\cite{MelrosePiazza}
 Let $D=(D_b)_{b\in B}$ be as before the fiberwise {{generalized }}Dirac operator along the smooth fibration $\pi: M\to B$ and assume that the index of $D$ in $K^1(B)$ is trivial. Given a spectral section $P$ for $D$, there exists a self-adjoint fiberwise zero-th order pseudodifferential  operator $A\in \Psi^{0} (M|B; E)$  such that for any $b\in B$ the operator $D_b+A_b$ is invertible and $P_b$ coincides with $1_{[0, +\infty)} (D_b+A_b)$.
\end{proposition}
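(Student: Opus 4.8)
The plan is to produce the perturbation $A$ in closed form out of $P$, $D$ and one fixed smooth cutoff, so that no gluing argument over $B$ is needed. Since $B$ is compact and $\varrho$ is smooth, set $\varrho_0:=\max_B\varrho<\infty$ and fix once and for all an even function $\chi\in C_c^\infty(\R)$ with $0\leq\chi\leq 1$, $\chi\equiv 1$ on $[-\varrho_0-1,\varrho_0+1]$ and $\operatorname{supp}\chi\subset[-\varrho_0-2,\varrho_0+2]$. I would then define the fiberwise family $A=(A_b)_{b\in B}$ by
$$
A_b := (2P_b-I)\,\chi(D_b)-D_b\,\chi(D_b),
$$
and claim this $A$ has all the required properties. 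Because $\chi$ and $t\mapsto t\chi(t)$ lie in $C_c^\infty(\R)$, both $\chi(D_b)$ and $D_b\chi(D_b)$ are finite rank — hence fiberwise smoothing — operators depending smoothly on $b$, and $P$ is a smooth family of zero-th order pseudodifferential projections; thus $A\in\Psi^{-\infty}(M|B;E)\subset\Psi^0(M|B;E)$.

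The first thing to check is self-adjointness, which reduces to the claim $[P_b,\chi(D_b)]=0$: granting this, $(2P_b-I)\chi(D_b)$ is a product of commuting self-adjoint operators and $D_b\chi(D_b)$ is a real function of $D_b$, so $A_b=A_b^*$. To see the commutation, write $E_{(\cdot)}$ for the spectral measure of $D_b$. The defining relations of a spectral section say exactly that $P_b$ is the identity on $\operatorname{ran}E_{(\varrho(b),+\infty)}$ and zero on $\operatorname{ran}E_{(-\infty,-\varrho(b))}$; in particular $P_b$ commutes with $E_{(\varrho(b),+\infty)}$, with $E_{(-\infty,-\varrho(b))}$, hence with $E_{[-\varrho(b),\varrho(b)]}$. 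Since $\varrho(b)\leq\varrho_0$ and $\chi\equiv 1$ on $[-\varrho_0-1,\varrho_0+1]\supseteq[-\varrho(b),\varrho(b)]$, one has $\chi(D_b)=E_{[-\varrho(b),\varrho(b)]}+\chi(D_b)E_{\{|\lambda|>\varrho(b)\}}$: the first summand commutes with $P_b$ by the above, and the second is a function of $D_b$ localized where $P_b$ is itself a spectral projection of $D_b$, so it too commutes with $P_b$. Hence $[P_b,\chi(D_b)]=0$.

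Finally I would establish invertibility and identify the positive spectral projection by a block decomposition. Split $L^2(M_b,E|_{M_b})=\cH_+\oplus\cH_0\oplus\cH_-$ with $\cH_+=\operatorname{ran}E_{(\varrho_0+1,+\infty)}$, $\cH_-=\operatorname{ran}E_{(-\infty,-\varrho_0-1)}$ and $\cH_0=\operatorname{ran}E_{[-\varrho_0-1,\varrho_0+1]}$. Each summand is invariant under $D_b$ and $\chi(D_b)$; it is also invariant under $P_b$, because $\varrho_0+1>\varrho(b)$ forces $P_b=I$ on $\cH_+$ and $P_b=0$ on $\cH_-$, while invariance of $\cH_0$ follows from the commutation just proved. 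On $\cH_0$ one has $\chi(D_b)=I$, so $D_b+A_b=2P_b-I$ there, which is invertible with positive spectral projection $P_b|_{\cH_0}$. On $\cH_+$, in an eigenbasis $D_be=\lambda e$ with $\lambda>\varrho_0+1>0$, one computes $(D_b+A_b)e=\big(\chi(\lambda)+\lambda(1-\chi(\lambda))\big)e$, a convex combination of $1$ and $\lambda$ and hence a strictly positive multiple of $e$; so $D_b+A_b>0$ on $\cH_+$, where $\mathbf 1_{[0,+\infty)}(D_b+A_b)=I=P_b|_{\cH_+}$. By symmetry $D_b+A_b<0$ on $\cH_-$, where $\mathbf 1_{[0,+\infty)}(D_b+A_b)=0=P_b|_{\cH_-}$. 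Summing the three blocks shows $D_b+A_b$ is invertible and $\mathbf 1_{[0,+\infty)}(D_b+A_b)=P_b$ for every $b$, as required.

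I expect the only genuinely delicate point to be the one in the second paragraph: one must \emph{simultaneously} keep $A$ a smooth family over $B$ — which is precisely why one works with a fixed smooth cutoff $\chi$ rather than the sharp projection $\mathbf 1_{[-R,R]}(D_b)$, whose rank jumps whenever an eigenvalue crosses $\pm R$ — and arrange that $P_b$ commutes with $\chi(D_b)$; everything else is bookkeeping with the spectral decomposition of $D_b$. A more classical route also works: cover $B$ by charts on which some constant $R$ avoids $\operatorname{spec}(D_b)$ and exceeds $\varrho$, build a finite rank perturbation chartwise from $\mathbf 1_{(-R,R)}(D_b)$ and $P$, and patch with a partition of unity — but then the patching, rather than the functional calculus, becomes the technical heart, which is why I would prefer the closed-form construction above.
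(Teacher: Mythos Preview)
The paper does not prove this proposition; it simply quotes it from \cite{MelrosePiazza} and moves on. So there is no ``paper's own proof'' to compare against, and your task was really to supply one.

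Your argument is correct. The key point --- that a self-adjoint projection $P_b$ which equals $I$ on $\operatorname{ran}E_{(\varrho(b),\infty)}$ and $0$ on $\operatorname{ran}E_{(-\infty,-\varrho(b))}$ must preserve the complementary piece $\operatorname{ran}E_{[-\varrho(b),\varrho(b)]}$ --- is exactly what makes $[P_b,\chi(D_b)]=0$ work, and you identify it clearly. The block computation on $\cH_+\oplus\cH_0\oplus\cH_-$ is clean: on $\cH_0$ one gets $D_b+A_b=2P_b-I$ with spectrum $\{\pm1\}$, and on $\cH_\pm$ the eigenvalue $\lambda(1-\chi(\lambda))\pm\chi(\lambda)$ is a genuine convex combination of $\lambda$ and $\pm1$, hence bounded away from zero with the right sign. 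One small remark: when you invoke ``the commutation just proved'' to get $P_b$-invariance of the new $\cH_0=\operatorname{ran}E_{[-\varrho_0-1,\varrho_0+1]}$, what you really use is the same self-adjointness argument with $\varrho_0+1$ in place of $\varrho(b)$; the commutation with $\chi(D_b)$ is not quite the same statement, though it is proved the same way.

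Your closed-form construction is in fact the standard one (it is essentially what Melrose--Piazza do), and your instinct that the single fixed cutoff is preferable to a chart-by-chart patching is sound: it makes the smoothness in $b$ immediate and avoids any question about rank jumps.
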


We assume from now on that the operator $A$ is chosen as in the previous proposition and thus associated with a fixed spectral section $P$ for $D$. So $P=1_{[0, +\infty) } (D+A)$ and $D+A$ is a zero-th order perturbation of $D$ and is a family of invertible operators. 

\begin{proposition}
For any $U\in GL_N(C^\infty (M))$, the operator 
$$
PUP:= (P\otimes 1_N)\circ U \circ (P\otimes 1_N),
$$
acting fiberwise on the image of $L^2(M_b,E)\otimes \C^N$ under the projection $P_b\otimes 1_N$, is a smooth family of Fredholm operators whose index class in $K^0(B)$ is denoted $\Ind (T_U)$.  Then, $\Ind(T_U)$  does not depend on the choice of the spectral section $P$ and only depends on the $K^1$ class of $U$.  
\end{proposition}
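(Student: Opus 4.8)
The plan is to verify, in turn, that $T_U=PUP$ is a smooth family of Fredholm operators so that $\Ind(T_U)\in K^0(B)$ is defined; that this index depends only on the class $[U]\in K^1(M)$; and finally that it is independent of the chosen spectral section $P$.

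For the Fredholm property, observe that $U$ and $U^{-1}$ act as smooth fiberwise multiplication operators on $L^2(M_b,E|_{M_b})\otimes\C^N$. Since $P$ is a smooth family of fiberwise pseudodifferential projections of order $0$ and the principal symbol of the multiplication operator $U$ commutes with that of $P\otimes 1_N$, the commutator $[P_b\otimes 1_N,\,U_b]$ is a fiberwise pseudodifferential operator of order $\le -1$, hence compact and continuous in $b$. Consequently
$$
(P_bU_bP_b)(P_bU_b^{-1}P_b)=P_b\otimes 1_N+K_b,\qquad (P_bU_b^{-1}P_b)(P_bU_bP_b)=P_b\otimes 1_N+K_b',
$$
with $\{K_b\},\{K_b'\}$ smooth families of compact operators. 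Thus $PU^{-1}P$ is a parametrix for $T_U$, and $T_U$ is a smooth, in particular norm-continuous, Fredholm family on the field of Hilbert spaces $b\mapsto P_b(L^2(M_b,E|_{M_b}))\otimes\C^N$ over $B$; its families index $\Ind(T_U)\in K^0(B)$ is therefore well defined.

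Dependence only on $[U]$ is then routine: a continuous path $U_t$ in $GL_N(C^\infty(M))$ produces a norm-continuous Fredholm family $PU_tP$, whose index is constant by homotopy invariance of the families index, while $P(U\oplus 1_k)P=PUP\oplus(P\otimes 1_k)$ with $P\otimes 1_k$ acting as the identity on its range and hence contributing $0$ to the index. These two facts show that $\Ind(T_U)$ factors through a homomorphism $K^1(M)\to K^0(B)$.

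The substantial step is independence of the spectral section, and I expect the existence of a common majorant to be the crux of the matter. Given two spectral sections $P$ and $P'$, invoke the result of \cite{MelrosePiazza} that there is a spectral section $Q$ with $Q\ge P$ and $Q\ge P'$; by symmetry it is enough to prove $\Ind(QUQ)=\Ind(PUP)$ whenever $Q\ge P$. In that case $R:=Q-P$ is a smooth family of finite-rank orthogonal projections whose range is a vector bundle $\cR\to B$, again by \cite{MelrosePiazza}. Decomposing $\operatorname{Range}(Q\otimes 1_N)=\operatorname{Range}(P\otimes 1_N)\oplus\operatorname{Range}(R\otimes 1_N)$, the family $QUQ$ acquires the block form
$$
QUQ=\begin{pmatrix} PUP & PUR \\ RUP & RUR \end{pmatrix},
$$
in which every block other than $PUP$ factors through the finite-rank bundle $\cR\otimes\C^N$ and is therefore a smooth family of compact operators. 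Since the families index is invariant under compact perturbations,
$$
\Ind(QUQ)=\Ind\begin{pmatrix} PUP & 0 \\ 0 & 0 \end{pmatrix}=\Ind\begin{pmatrix} PUP & 0 \\ 0 & \mathrm{id} \end{pmatrix}=\Ind(PUP),
$$
the middle equality using that $\mathrm{id}_{\operatorname{Range}(R\otimes 1_N)}$ is again of finite rank. Applying this to both $P$ and $P'$ yields $\Ind(PUP)=\Ind(QUQ)=\Ind(P'UP')$. The one subtlety that must not be glossed over is that $QUQ$ restricted to $\operatorname{Range}(R\otimes 1_N)$ is in general not invertible, so the comparison has to be routed through the finite-rank perturbation above rather than through a naive orthogonal splitting.
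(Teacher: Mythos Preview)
Your proof is correct, but the paper takes a more streamlined route via the Baum--Douglas device. Rather than working with the compression $PUP$ on the variable subspace $\operatorname{Range}(P_b\otimes 1_N)$, the paper extends to the zeroth-order fiberwise elliptic operator $T_{U,P}:=I-P+PUP$ acting on the \emph{fixed} ambient space $L^2(M_b,E|_{M_b})^N$, and defines $\Ind(T_U)$ as its Atiyah--Singer families index. This makes the remaining two points almost immediate. For independence of $P$: since $P-P'$ is supported in a finite spectral band and is therefore a smooth family of finite-rank operators, one has $T_{U,P'}-T_{U,P}=(P-P')+(P'-P)UP'+PU(P'-P)$, a compact perturbation on a fixed space, so no third projection $Q$ is needed. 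For dependence only on $[U]$: the principal symbol of $T_{U,P}$ depends only on $U$ (the symbol of $P$ being scalar on the $\C^N$ factor), so a homotopy of $U$ gives a homotopy of symbols and hence of index classes. Your common-majorant argument and block decomposition are valid and make the role of the finite-rank defect $[P-Q]$ explicit, but the Baum--Douglas extension buys a single ambient Hilbert bundle in which all comparisons become direct finite-rank perturbations.
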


\begin{proof} Compare with \cite{DaiZhang}.
For any fixed $b\in B$, the usual proof for a single Toeplitz operator on the odd dimensional closed manifold $M_b$ shows that 
$$
P_bU_bP_b= (P_b\otimes 1_N)\circ U|_{M_b} \circ (P_b\otimes 1_N)
$$ 
 is a Fredholm operator in the Hilbert space $P_b(L^2(M_b, E|_{M_b}))^N$.  Hence, $PUP$ is a smooth family of Fredholm operators on the image of $P$. We then know that the homotopy class of this family  defines a class in $K^0(B)$, see \cite{AtiyahBook}. To explicitly define this class as the Atiyah-Singer index of a fiberwise elliptic operator, we follow \cite{BaumDouglas} and define the zero-th order fiberwise elliptic pseudodifferential operator $T_{U,P}$ by
$$
T_{U,P} := I-P + PUP.
$$
The index $\Ind(PUP)$ is, by definition, the Atiyah-Singer index class of $T_{U,P}$ in $K^0(B)$ \cite{AtiyahSinger4}. If we choose another spectral section $P'$ then the usual computation shows that the operator $T_{U,P'}$ is a perturbation of 
$T_{U, P}$, therefore the index class is unchanged. A homotopy class of invertibles $U_t$ yields a homotopy class of principal symbols of the fiberwise operator $T_{U,P}$ and hence the index is unchanged.
\end{proof}

\begin{definition}\cite{DaiZhang}
Assume that $[0,1]\ni t\mapsto D_t:=(D_{t,b})_{b\in B}$ is a smooth path of fiberwise elliptic pseudodifferential operators such that the index class of the endpoints, $D_0$ and $D_1$ in {{$K^1(B)$}}, are trivial. Choose spectral sections $P_0, P_1$ for $D_0, D_1$ respectively and fix a spectral section $Q= (Q_t)_{t\in [0,1]}$ for the total family viewed as a fiberwise operator over $B\times [0,1]$. Then the spectral flow of the path $(D_t)_{t\in [0,1]}$ with respect to $P_0$ and $P_1$ is the class in $K^0(B)$ defined by:
$$
 \SF (D; P_0, P_1) := [P_1- Q_1] - [P_0 - Q_0].
$$
\end{definition}

It is easy to check that $ \SF(D; P_0, P_1)$ does not depend on the choice of the global spectral section $Q$. In this paper we are mainly interested in the affine path $D_t:= D + t U^{-1} [D, U]$ where $D$ is a family of generalized Dirac operators over $B$ whose index class in $K^1(B)$ is trivial, and $U$ is a given element of $GL_N(C^\infty (M))$. In this case the endpoints are conjugate and we consider the spectral flow with respect to the spectral sections $P_0=P$ and $P_1= U^{-1} P U$, where $P$ is a fixed spectral section for $D$. It turns out that the spectral flow does not depend on $P$ either and is an invariant of the principal symbol of $D$ and of the homotopy class of $U$. We denote it $ \SF(D,U)$. Indeed, Dai and Zhang proved the following

\begin{proposition}\cite{DaiZhang}.
We have in $K^0(B)$, $
\Ind (T_U) =  - \SF (D, U).$
\end{proposition}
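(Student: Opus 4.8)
The strategy is to relate the Toeplitz construction to the spectral flow via a single homotopy of fiberwise operators over $B\times[0,1]$, so that both sides become the difference of the same pair of index classes in $K^0(B)$. First I would fix a spectral section $P$ for $D$ and recall that by the earlier proposition we may choose a self-adjoint $A\in\Psi^0(M|B;E)$ with $D+A$ invertible and $P=1_{[0,+\infty)}(D+A)$. Consider the affine path $D_t := D + tU^{-1}[D,U]$ connecting $D_0=D$ to $D_1=U^{-1}DU$; its endpoints are conjugate, so $P_0=P$ and $P_1=U^{-1}PU$ are spectral sections for $D_0$ and $D_1$. By definition $\SF(D,U)=[P_1-Q_1]-[P_0-Q_0]$ for any spectral section $Q=(Q_t)$ of the total family, and the first step is simply to note this is independent of $Q$ (already stated above).

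The heart of the argument is to produce a convenient global spectral section $Q$ for the path and to compute $[P_1-Q_1]-[P_0-Q_0]$ explicitly. The natural choice is to transport $P$: since $U$ gives a unitary (after polar decomposition, or working directly with $GL_N$) conjugating $D_0$ to $D_1$, one obtains $Q_t$ by conjugating $P$ by the obvious interpolating family, giving $Q_0=P_0$ and $Q_1=P_1$; but then the naive difference vanishes, which signals that one must instead use the \emph{graph} or \emph{suspension} picture — the correct $Q$ is the spectral section of the operator on $B\times[0,1]$ obtained from $D_t$ together with a $\partial_t$-type term, and the difference $[P_1-Q_1]-[P_0-Q_0]$ is exactly the index of the operator $T_{U,P}=I-P+PUP$ with a sign. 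Concretely, I would follow Dai--Zhang \cite{DaiZhang}: identify $P(L^2(M_b,E)^N)$ with the range bundle, observe that $PUP$ on this range is Fredholm with index class $\Ind(T_U)$, and match the Atiyah--Singer index of $T_{U,P}$ against the spectral-flow definition by a standard bundle-theoretic computation (the "creation of eigenvalue crossings" of $D_t$ as $t$ runs from $0$ to $1$ produces precisely the kernel/cokernel bundles of $PUP$, up to orientation).

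The main obstacle — and the reason I would defer to \cite{DaiZhang} for the technical core — is bookkeeping the sign and the precise identification of the range bundle of $P_1-Q_1$ (respectively $P_0-Q_0$) with $\Ker$ and $\operatorname{Coker}$ of the Toeplitz family. This requires: (i) checking that the difference of two spectral sections is a genuine $K$-theory class represented by the difference of two finite-rank-type projection-valued families (this is the content of the Melrose--Piazza/Dai--Zhang formalism), (ii) showing the homotopy invariance used to reduce to a convenient model, and (iii) tracking the $(-1)$ that arises because spectral flow counts eigenvalues crossing \emph{upward} while the Toeplitz index conventionally measures $\Ker PUP$ minus $\operatorname{Coker} PUP$. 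Once these identifications are in place the equality $\Ind(T_U)=-\SF(D,U)$ is immediate from comparing the two definitions. Since the statement is attributed to \cite{DaiZhang} and the techniques are classical, I would present only the reduction to their computation and cite them for the sign-chasing, exactly as the surrounding text does for analogous classical facts.
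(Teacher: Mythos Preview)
The paper does not give a proof of this proposition at all: it is simply quoted from \cite{DaiZhang} and used as a black box. So there is nothing to compare your argument against here.

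That said, as a sketch of the Dai--Zhang argument your proposal is in the right spirit but vague at the crucial point. You correctly observe that the naive transported section $Q_t$ conjugating $P$ along the path collapses the spectral flow to zero, and you correctly conclude that a different choice of $Q$ is needed. However, the sentence ``one must instead use the graph or suspension picture --- the correct $Q$ is the spectral section of the operator on $B\times[0,1]$ obtained from $D_t$ together with a $\partial_t$-type term'' is not an argument: it gestures toward a construction without specifying it, and it is not clear that such a $Q$ would satisfy the spectral-section condition for the family $(D_t)$ as required by the definition. In Dai--Zhang the actual mechanism is rather that one chooses $Q$ with $Q_0=P_0$ (so that $[P_0-Q_0]=0$) and then identifies the difference class $[P_1-Q_1]=[U^{-1}PU-Q_1]$ directly with minus the Toeplitz index via an explicit comparison of the finite-rank defects; no suspension with $\partial_t$ enters. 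Your items (i)--(iii) in the final paragraph are an accurate list of what must be checked, and your decision to cite \cite{DaiZhang} for the details matches exactly what the paper itself does.
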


\subsection{Second theorem and reduction to the third theorem}

The pairing of $\ell$-entire bivariant cyclic homology with entire cyclic homology with respect to $\Sigma_\ell$ reads in our case as follows:
$$
 <\JLO (D) , U > := \sum_{n\geq 0} (-1)^n n! \left<\left<U^{-1},[\B_\sigma, U],   \cdots, [\B_\sigma, U^{-1}], [\B_\sigma, U]\right>\right>_{\B_\sigma,2n+1}
$$

\begin{theorem} \label{JLO=SF} Assume that the index class of $D$ in $K^1(B)$ is trivial. Then for any $U\in GL_N(C^\infty (M))$, the following relation holds in the even de Rham cohomology of the base manifold $B$:
$$
\frac{1}{{\sqrt\pi}} <\JLO (D) , U >  = \ch ( \SF (D,U)) = - \ch (\Ind (T_U)).
$$
where $\ch$ is the usual Chern character on the manifold $B$.
\end{theorem}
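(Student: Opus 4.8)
The plan is to run, in the bivariant superconnection framework, the argument Getzler used for a single invertible Dirac operator in \cite{GetzlerOdd}, replacing every heat-operator supertrace by the closed graded trace $\tau_\sigma$ with values in $\Omega^*(B)$ and every Fredholm index by a de~Rham Chern character on $B$. Since the bivariant JLO cochain is an $\ell$-entire cocycle (Theorem \ref{JLO}), its pairing with the entire cyclic cycle $\ch(U)$ converges to a closed form on $B$ and depends only on the entire cohomology class of $\JLO(D)$ (Corollary \ref{pairing}). The first step is then to reduce to the case of a fiberwise invertible family. Fix a spectral section $P$ for $D$ and, by the Melrose--Piazza proposition quoted above, a self-adjoint $A\in\Psi^0(M|B;E)$ with $D+A$ fiberwise invertible and $P=1_{[0,\infty)}(D+A)$. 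The superconnection $\A:=\nabla+\sigma(D+A)=\B_\sigma+\sigma A$ is of the type treated in Theorem \ref{cohomologous} (the perturbation $\sigma A$ is odd, of pseudodifferential order $0$, and of positive form degree), so $\JLO(\A)$ and $\JLO(\B_\sigma)$ are cohomologous $\ell$-entire bivariant cocycles; hence $\langle\JLO(D),U\rangle$ is unchanged when $D$ is replaced by $D+A$. Both $\SF(D,U)$ and $\Ind(T_U)$ depend only on the principal symbol of $D$ and the class of $U$, so they are unchanged as well; from now on $D$ is invertible, $P=1_{[0,\infty)}(D)$ is a genuine zeroth-order pseudodifferential projection, and $T_U=I-P+PUP$.

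For the invertible case I would introduce the rescaled superconnections $\nabla+\sigma\,uD$, $u\in(0,\infty)$, and use the transgression identities of the paper --- the fifth item of Lemma \ref{relations} together with Proposition \ref{Identity} --- to show that, after pairing with the $(b+B)$-closed cycle $\ch(U)$, the $u$-derivative of $\langle\JLO(uD),U\rangle$ is $d_B$-exact, so the de~Rham class of $\tfrac1{\sqrt\pi}\langle\JLO(uD),U\rangle$ is independent of $u$. (The same transgression argument along the affine path $D+tU^{-1}[D,U]$ gives the families analogue of homotopy invariance of spectral flow, matching the Dai--Zhang definition.) One then computes this invariant by letting $u\to\infty$: using a refinement, uniform in $u$, of the estimates of Section~4 applied with $uD$ in place of $D$, the bivariant JLO pairing converges, the limiting cocycle is controlled by the spectral projection $P=1_{[0,\infty)}(D)$, and by the standard evaluation of the Connes--Chern pairing of an invertible phase with a $K^1$-class --- now carried out fiberwise with coefficients in $\Omega^*(B)$ --- it equals, up to the constant $1/\sqrt\pi$ produced by the Gaussian and simplex integrals of the type computed in Lemma \ref{p=N}, the Chern character form of the index bundle of the Toeplitz family $T_U=I-P+PUP$. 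This yields $\tfrac1{\sqrt\pi}\langle\JLO(D),U\rangle=-\ch(\Ind T_U)$ in $H^{\mathrm{even}}(B,\C)$.

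Finally the Dai--Zhang proposition $\Ind(T_U)=-\SF(D,U)$ (see \cite{DaiZhang}) upgrades this to $\tfrac1{\sqrt\pi}\langle\JLO(D),U\rangle=\ch(\SF(D,U))=-\ch(\Ind(T_U))$, which is the assertion; combining with Bismut's local index theorem for the superconnection heat kernel (the $u\to 0$ limit) then recovers the topological formula $\omega\mapsto\int_{M/B}\omega\wedge\hat A(TM|B)$ mentioned after the statement. I expect the main obstacle to be the large-time step of the middle paragraph: one must control $e^{-u^2\B_\sigma^2}$ and every simplex integral occurring in the bivariant JLO expression uniformly as $u\to\infty$ (not just for fixed $u$, which is what Section~4 provides), and then identify the resulting bornological cocycle with the explicit $(b,B)$-representative of $\ch([P\maE])$ whose pairing with $\ch(U)$ is the families Toeplitz index --- together with the bookkeeping that extracts the precise factor $1/\sqrt\pi$ through the odd Clifford variable $\sigma$. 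Everything else is a direct application of Lemma \ref{relations}, Proposition \ref{Identity}, Theorem \ref{cohomologous} and the Dai--Zhang result.
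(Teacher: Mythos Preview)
Your plan diverges from the paper's proof in a substantial way, and the first step contains a genuine error.

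\textbf{The reduction to invertible $D$ is not justified by Theorem \ref{cohomologous}.} You claim that $\sigma A$, with $A\in\Psi^0(M|B;E)$ self-adjoint, is a perturbation ``of positive form degree''. It is not: $\sigma$ raises the \emph{total} degree by one but contributes nothing to the differential-form degree in $\Lambda^*B$, and the hypothesis in the paper's superconnection perturbation result is precisely that the form degree be positive (this is what makes the perturbed $X'$ nilpotent and the Duhamel expansion finite). So Theorem \ref{cohomologous} does not apply to $\B_\sigma+\sigma A$, and you have no argument that $\JLO(D)$ and $\JLO(D+A)$ represent the same bivariant class. This gap is not cosmetic: passing to invertible $D$ is the whole point of your first paragraph, and the large-time analysis you propose afterwards depends on it.

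\textbf{The paper takes a different route that avoids both problems.} Instead of reducing to invertible $D$ and analysing the $u\to\infty$ limit of $\JLO(uD)$, the paper (i) shows that the differential form $\int_0^1\tau_\sigma(\dot\A_t e^{-\A_t^2})dt$ (for the affine path $\A_t=\A+tU^{-1}[\A,U]$) is closed with cohomology class independent of the choice of superconnection $\A$; (ii) quotes Dai--Zhang's heat-kernel computation identifying this class, for the Bismut superconnection, with $\ch(\SF(D,U))$; and then (iii) proves the auxiliary Theorem \ref{JLO=SF2}, which equates this same integral, for $\A=\B_\sigma$, with the JLO pairing up to exact forms. Step (iii) is a families version of Getzler's rectangle argument: one doubles the Hilbert space, forms $V=\begin{pmatrix}0&iU^{-1}\\-iU&0\end{pmatrix}$, builds a superconnection over $B\times[0,1]\times[0,\infty)$ with mass term $xV$, and uses Stokes on the rectangle $[0,1]\times[0,x_0]$ together with the entire estimates of Section~4 to control the $x_0\to\infty$ limit. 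The $x$-integral over $[0,\infty)$ produces exactly the JLO series and the factor $\sqrt\pi$ via the Gaussian moments, while the $t$-integral at $x=0$ gives the spectral-flow form. No invertibility of $D$ and no uniform-in-$u$ control of $e^{-u^2\B_\sigma^2}$ are needed: the convergence issues are all handled by the fixed-scale estimates already proved in Theorem \ref{JLO}.

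In short, your proposed large-$u$ strategy is conceptually reasonable for a single invertible operator, but here it rests on an invalid application of Theorem \ref{cohomologous} and replaces a clean Stokes argument by an asymptotic analysis you correctly flag as the hard part and do not carry out.
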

The last relation is clear from the previous proposition and the fact that the Chern character only depends on the $K$-theory class. The proof of this theorem is long and we split it into several lemmas and propositions.

\begin{lemma} 
Let $\A$ be a  superconnection associated with the operator $\sigma D$ as in the previous sections. Define the affine path of superconnections $(\A_t)_{0\leq t\leq 1}$ given by
$$
\A_t := \A + t U^{-1} [\A, U].
$$
Then \ 
(i) the differential form $\int_0^1 \tau _\sigma ( U^{-1} [\A, U] e^{-\A_t^2}) dt$ is a closed form on $B$,\\
 (ii)  the cohomology class of $\int_0^1 \tau _\sigma ( U^{-1} [\A, U] e^{-\A_t^2}) dt$ does not depend on the choice of superconnection $\A$. 
\end{lemma}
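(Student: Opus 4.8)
The plan is to read (i) as a single transgression and (ii) as a double transgression, using throughout the Duhamel formula, the graded tracial property of $\tau_\sigma$, and the fact that $\tau_\sigma$ is a \emph{closed} graded trace for any of the superconnections that occur. This last point is the $n=0$ case of the third relation in Lemma \ref{relations}, which stays valid with $\A_t$ in place of $\B_\sigma$ since $\A_t$ is again a superconnection of the type considered, now associated with $\sigma D_t$, where $D_t:=D+tU^{-1}[D,U]$ is the Dai--Zhang affine path.

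For (i): put $\omega:=U^{-1}[\A,U]=\frac{d}{dt}\A_t$, which is independent of $t$; since $|U|=0$ we have $[\A,U]=\A U-U\A$, so $\A_0=\A$ and $\A_1=U^{-1}\A U$. From $\frac{d}{dt}\A_t^2=\omega\A_t+\A_t\omega=[\A_t,\omega]$ and Lemma \ref{DuhamelDer}, $\frac{d}{dt}e^{-\A_t^2}=-\int_0^1 e^{-s\A_t^2}[\A_t,\omega]e^{-(1-s)\A_t^2}\,ds$; as $[\A_t,\omega]$ and $e^{-s\A_t^2}$ have even total degree, the tracial property of $\tau_\sigma$ collapses the $s$--integral and gives $\frac{d}{dt}\tau_\sigma(e^{-\A_t^2})=-\tau_\sigma([\A_t,\omega]\,e^{-\A_t^2})$. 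The Bianchi identity $[\A_t,e^{-\A_t^2}]=0$ lets one rewrite $[\A_t,\omega]e^{-\A_t^2}=[\A_t,\omega\,e^{-\A_t^2}]$, and the closedness of $\tau_\sigma$ for $\A_t$ turns $-\tau_\sigma([\A_t,\omega\,e^{-\A_t^2}])$ into $d_B\tau_\sigma(\omega\,e^{-\A_t^2})$. Integrating over $t\in[0,1]$,
\[
d_B\int_0^1 \tau_\sigma(\omega\,e^{-\A_t^2})\,dt=\tau_\sigma(e^{-\A_1^2})-\tau_\sigma(e^{-\A_0^2}),
\]
and since $\A_1^2=U^{-1}\A^2U$ forces $e^{-\A_1^2}=U^{-1}e^{-\A^2}U$, the tracial property (with $|U|=0$) makes the right side vanish. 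This proves (i).

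For (ii): join two such superconnections $\A^0,\A^1$ for $\sigma D$ by $\A^r:=\A^0+r(\A^1-\A^0)$, $r\in[0,1]$, form the two--parameter family $\A_{r,t}:=\A^r+tU^{-1}[\A^r,U]$ (so $\A_{r,0}=\A^r$, $\A_{r,1}=U^{-1}\A^r U$, $\A_{0,t}=\A^0_t$, $\A_{1,t}=\A^1_t$), and regard $\widetilde{\A}:=d_r+d_t+\A_{r,t}$ as a superconnection for $\sigma D_t$ over the fibration $M\times[0,1]^2\to B\times[0,1]^2$ obtained by base change; its square differs from $D_t^2$ by a fiberwise operator with coefficients in positive--degree forms, so $e^{-\widetilde{\A}^2}$ is again a finite Duhamel sum, fiberwise smoothing with coefficients in forms on $B\times[0,1]^2$. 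Bianchi together with the closedness of $\tau_\sigma$ for $\widetilde{\A}$ (Lemmas \ref{CommNabla} and \ref{CommD} applied verbatim to $\widetilde{\A}$ and $D_t$) give that $\tau_\sigma(e^{-\widetilde{\A}^2})$ is $(d_B+d_r+d_t)$--closed. Expanding in powers of $dr,dt$ and writing $\Phi_r,\Phi_t,\Phi_{rt}\in\Omega^*(B)$ for the coefficients of $dr$, $dt$, $dr\,dt$, with $\Phi_r=\pm\tau_\sigma(\partial_r\A_{r,t}\,e^{-\A_{r,t}^2})$ and $\Phi_t=\pm\tau_\sigma(\partial_t\A_{r,t}\,e^{-\A_{r,t}^2})$, the $dr\,dt$--component of the closedness relation reads $\partial_r\Phi_t-\partial_t\Phi_r=\pm d_B\Phi_{rt}$. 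Since $\eta(\A^r):=\int_0^1\tau_\sigma(U^{-1}[\A^r,U]e^{-(\A^r_t)^2})\,dt=\pm\int_0^1\Phi_t(r,t)\,dt$ (the integrand $\partial_t\A_{r,t}=U^{-1}[\A^r,U]$ being $t$--independent), integrating in $t$ yields $\partial_r\eta(\A^r)=\pm[\Phi_r(r,1)-\Phi_r(r,0)]+d_B\int_0^1\Phi_{rt}(r,t)\,dt$. The boundary term vanishes: $\partial_r\A_{r,1}=U^{-1}(\A^1-\A^0)U$ and $e^{-\A_{r,1}^2}=U^{-1}e^{-(\A^r)^2}U$, so by the tracial property $\Phi_r(r,1)=\pm\tau_\sigma((\A^1-\A^0)e^{-(\A^r)^2})=\Phi_r(r,0)$. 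Hence $\partial_r\eta(\A^r)$ is $d_B$--exact for each $r$, and integrating over $r$ shows $\eta(\A^1)-\eta(\A^0)$ is $d_B$--exact. Equivalently, one may carry out the double transgression by hand, differentiating $\eta(\A^r)$ in $r$ and invoking Lemma \ref{DuhamelDer} and the fifth relation of Lemma \ref{relations}; this reproduces the same identity.

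The routine part is the analytic input — the Duhamel formulae, the finiteness of the heat expansions, and the fact that the tracial and closedness properties of $\tau_\sigma$ survive in the presence of the unbounded fiberwise operators $D$ and $D_t$ — all of which is either already in place or is a word--for--word repetition of the proofs of Lemmas \ref{CommNabla} and \ref{CommD}, now for $D_t$ and for $\widetilde{\A}$ over $B\times[0,1]^2$. The main obstacle is the graded sign bookkeeping: $\A$, $\A_t$, $U^{-1}[\A,U]$, $dr$ and $dt$ all have odd total degree while the curvatures and heat operators are even, and one must check that the signs in the $dr\,dt$--component of $(d_B+d_r+d_t)\tau_\sigma(e^{-\widetilde{\A}^2})=0$ are arranged so that the boundary terms produced by integrating in $t$ are exactly the conjugation--related pair $\Phi_r(r,0),\Phi_r(r,1)$, which then cancel pointwise in $r$.
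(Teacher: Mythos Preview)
Your proof is correct and takes essentially the same approach as the paper. For (i) the paper packages your direct Duhamel--Bianchi--closedness computation as the single statement that $\tau_\sigma(e^{-\tilde\A^2})$ is closed on $B\times[0,1]$ for the extended superconnection $\tilde\A=dt\,\partial_t+\A_t$; for (ii) both arguments use the same two-parameter family (your $\A_{r,t}$ coincides with the paper's $\A_{t,s}$ since the path construction $\A\mapsto\A_t$ is affine in $\A$), extract the $dr\,dt$ (resp.\ $dt\,ds$) component of the closedness of the heat form on $B\times[0,1]^2$, and finish with the same conjugation identity $\A_{r,1}=U^{-1}\A^r U$ to kill the boundary terms.
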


\begin{proof}
(1)  Let $\tilde\A:= dt \frac{\pa}{\pa t}  + \A_t$ be the associated  superconnection for the fibration $M\times [0,1] \to B\times [0,1]$. Therefore, the differential form $
\tau _\sigma (e^{-\A^2})$ is closed in $B\times [0,1]$. A straightforward computation using the fact that $\tau _\sigma (e^{-\A_t^2})$ is itself closed in $B$, proves the following relation
$$
d_B \left(\tau_\sigma(\dot{\A_t} e^{-\A_t^2})\right) = \frac{d}{d t} \tau _\sigma (e^{-\A_t^2}),
$$
Hence,
$$
d_B \int_0^1 \tau _\sigma (\dot{\A_t} e^{-\A_t^2}) dt = \tau _\sigma (e^{-\A_1^2}) - \tau _\sigma (e^{-\A_0^2}) = \tau _\sigma (U^{-1} e^{-\A_0^2} U) - \tau _\sigma (e^{-\A_0^2}) = 0.
$$
The last equality is deduced from the relation $\sigma U = U\sigma$ and the graded tracial property of the functional $\tau$. 

(2) Assume that we are given another superconnection $\B'$ associated with $\sigma D$. Consider the corresponding affine path $\B'_t:= \B' +tU^{-1} [\B', U]$ as before, and  the smooth family $\A_{t,s} = \B_t + s (\B'_t - \B_t)$ of superconnections associated with $\sigma D$, where $s$ also runs over $[0,1]$. We then set
$$
\D := \A_{t,s} + dt \frac{\pa}{\pa t} + ds \frac{\pa}{\pa s}.
$$
Clearly, $\D$ is a superconnection associated with $\sigma D$ but for the smooth fibration $M \times [0,1]^2\to B \times [0,1]^2$. Therefore, the differential form $
\tau _\sigma (e^{-\D^2})$ is closed in $B\times [0,1]^2$. Using this fact and that 
$$
d_B \tau _\sigma (e^{- \A_{t,s}^2}) = 0,
$$
we obtain the relation
$$
d_B \tau_\sigma (e^{- \A_{t,s}^2} \wedge K^2_{t,s}) =  dt\wedge ds \left[ \frac{\pa}{\pa s} \tau _\sigma (\frac{\pa \A_{t,s}}{\pa t} e^{- \A_{t,s}^2}  ) - \frac{\pa}{\pa t}\tau _\sigma (\frac{\pa \A_{t,s}}{\pa s} e^{- \A_{t,s}^2}  )  \right],
$$
where $K_{t,s} = dt \wedge \frac{\pa \A_{t,s}}{\pa t} + ds \wedge \frac{\pa \A_{t,s}}{\pa s}$. Now we can compute
\begin{eqnarray*}
\int_0^1 \tau _\sigma (\frac{\pa \A_{t,1}}{\pa t} e^{- \A_{t,1}^2}  ) dt - \int_0^1 \tau _\sigma (\frac{\pa \A_{t,0}}{\pa t} e^{- \A_{t,0}^2}  ) dt & = & \int_0^1 \frac{\pa}{\pa s} \left[ \int_0^1 \tau _\sigma (\frac{\pa \A_{t,s}}{\pa t} e^{- \A_{t,s}^2}  ) dt \right] ds\\ 
& = & \int_{[0,1]^2} \frac{\pa}{\pa s} \tau _\sigma (\frac{\pa \A_{t,s}}{\pa t} e^{- \A_{t,s}^2}  ) dt \wedge ds\\ 
& = & \int_{[0,1]^2} \frac{\pa}{\pa t} \tau _\sigma (\frac{\pa \A_{t,s}}{\pa s} e^{- \A_{t,s}^2}  ) dt \wedge ds \\ & & + d_B \int_{[0,1]^2} \tau _\sigma (e^{-\A_{t,s}^2} \wedge K_{t,s}^2) \\ 
& = & \int_0^1 \left[ \tau _\sigma (  \frac{\pa \A_{1,s}}{\pa s} e^{- \A_{1,s}^2}) - \tau _\sigma (  \frac{\pa \A_{0,s}}{\pa s} e^{- \A_{0,s}^2} ) \right]  ds \\ & & + d_B \int_{[0,1]^2} \tau _\sigma (e^{-\A_{t,s}^2} \wedge K_{t,s}^2).
\end{eqnarray*}
Notice that
$$
\A_{1,s} = U^{-1} \A_{0,s} U \text{ and } \frac{\pa \A_{1,s}}{\pa s} = \B'_1 - \B_1 = U^{-1} \frac{\pa \A_{0,s}}{\pa s} U.
$$
Therefore, the proof is complete.
\end{proof}

The next proposition is an easy rephrasing of a result of Dai and Zhang:

\begin{proposition} \cite{DaiZhang}
Let  $\A$ be the Bismut superconnection associated with $\sigma D$, then the cohomology class of the differential form $\frac{-1}{\pi^{1/2}} \int_0^1 \tau _\sigma (\dot{\A_t} e^{-\A_t^2}) dt$ coincides with the Chern character of the spectral flow, i.e. 
$$
\ch ( \SF(D, U)) =  \frac{-1}{\pi^{1/2}} \left[\int_0^1 \tau _\sigma (\dot{\A_t} e^{-\A_t^2}) dt\right].
$$
\end{proposition}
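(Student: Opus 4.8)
The plan is to obtain this statement as a translation of the local index theorem of \cite{DaiZhang} into the Quillen $\sigma$-superconnection formalism used in this paper, together with the preceding lemma (closedness of $\int_0^1 \tau_\sigma(\dot{\A_t}\, e^{-\A_t^2})\,dt$ and independence of its de Rham class of the choice of superconnection associated with $\sigma D$). First I would recall the set-up of \cite{DaiZhang}: one takes the affine path of fiberwise self-adjoint operators $D_t = D + t\,U^{-1}[D,U]$, which joins $D$ to $U^{-1}DU$, forms the (rescaled) Bismut superconnection of this path over $B\times[0,1]$, and shows that $\ch(\SF(D,U))$ is represented by an explicit transgression ($\eta$-type) differential form on $B$ built from the associated heat kernel.

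The heart of the argument is then a dictionary between the two descriptions of that transgression form. Since $\A_t = \A + t\,U^{-1}[\A,U]$ is affine, $\dot{\A_t} = U^{-1}[\A,U] = \sigma\,U^{-1}[D,U] + U^{-1}[\nabla,U]$, so the coefficient of $\sigma$ in $\dot{\A_t}$ is exactly $\dot D_t = U^{-1}[D,U]$, the velocity of the Dai--Zhang path. Consequently $\tau_\sigma(\dot{\A_t}\, e^{-\A_t^2})$, which by definition extracts the $\sigma$-linear part, coincides termwise with the Dai--Zhang $\eta$-integrand for $\{D_t\}$, up to the universal Gaussian constant produced when one integrates out the central Clifford variable $\sigma$ in Quillen's odd Chern character formalism --- the same constant $\sqrt\pi$ that appears in the odd Bismut--Cheeger $\eta$-form and in Getzler's odd-JLO--spectral-flow identity \cite{GetzlerOdd}. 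I would pin this constant down by comparing one explicitly computable contribution, e.g. the lowest-degree term.

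To reach the precise local representative of \cite{DaiZhang}, I would invoke the preceding lemma to replace $\A$ by a rescaled Bismut superconnection $\A^{(s)}$ (equivalently, rescale the vertical Dirac operator $D \rightsquigarrow \sqrt{s}\,D$); the class of $\int_0^1 \tau_\sigma(\dot{\A^{(s)}_t}\, e^{-(\A^{(s)}_t)^2})\,dt$ does not depend on $s$, and in the adiabatic limit $s\to\infty$ the heat kernel localizes along the fibers so that, by the estimates and convergence results of \cite{DaiZhang}, this transgression form converges to the explicit local form there representing $\ch(\SF(D,U))$. Since the class is constant in $s$, this yields
$$
\frac{-1}{\sqrt\pi}\left[\int_0^1 \tau_\sigma(\dot{\A_t}\, e^{-\A_t^2})\,dt\right] = \ch(\SF(D,U))
$$
in $H^{even}(B,\C)$. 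The main obstacle I expect is the bookkeeping in the dictionary step: matching $\tau_\sigma$ against the Dai--Zhang $\eta$-integrand while tracking the graded signs coming from $\sigma$ and from the form degrees, and fixing the normalization $\sqrt\pi$; the analytic input (the adiabatic limit and its identification with the local form) is quoted directly from \cite{DaiZhang}.
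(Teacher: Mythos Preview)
Your proposal is correct and follows the same line as the paper, which in fact gives no proof beyond the one-sentence remark that ``the proof of this proposition relies on the good behaviour of the asymptotics of the rescaled Bismut superconnection'' and cites \cite{DaiZhang}. Your sketch simply makes explicit the dictionary between the $\sigma$-formalism and the Dai--Zhang $\eta$-type transgression form, and the use of the preceding lemma to pass to the rescaled Bismut superconnection before invoking the adiabatic asymptotics of \cite{DaiZhang}; this is exactly what the paper's remark points to.
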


The proof of this proposition relies on the good behaviour of the asymptotics of the rescaled Bismut superconnection. 
 To sum up, in order to prove 
 Theorem \ref{JLO=SF}, we are reduced to proving the following auxiliary result. 

\begin{theorem}\label{JLO=SF2}
When $\A=\B_\sigma$ and as differential forms on the base manifold $B$, we have the following equality:
$$
 \int_0^1 \tau _\sigma (\dot{\A_t} e^{-\A_t^2}) dt = \frac{1}{2} \sum_{k\geq 0} (-1)^k k!( \left<\left< U^{-1}, [\A, U],[\A, U^{-1}], \cdots, [\A, U]\right>\right>_{2k+1, \A}$$ 
$$- \langle\langle U, [\A, U^{-1}], [\A, U], \cdots , [\A, U], [\A, U^{-1}] \rangle\rangle_{2k+1,\A}) + \text{exact forms on the base}.
$$
\end{theorem}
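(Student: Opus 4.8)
\emph{Approach.} The proof is a computation inside the calculus of generalized JLO functionals, in the spirit of \cite{GetzlerSzenes} (and, for the odd case, \cite{GetzlerOdd}): every manipulation used is an instance of one of the five relations of Lemma \ref{relations}, and the real content lies in the combinatorial assembly. Write $\A=\B_\sigma$ throughout, $\Gamma:=U^{-1}[\A,U]$, and denote by $\Phi_k$ and $\Psi_k$ the two JLO functionals on the right-hand side of the statement, namely $\Phi_k=\langle\langle U^{-1},[\A,U],[\A,U^{-1}],\cdots,[\A,U]\rangle\rangle_{2k+1,\A}$ and $\Psi_k=\langle\langle U,[\A,U^{-1}],[\A,U],\cdots,[\A,U],[\A,U^{-1}]\rangle\rangle_{2k+1,\A}$.

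\emph{Step 1: elementary identities.} First I would record three facts. Along the affine path one has $\dot{\A}_t=\Gamma$ (constant in $t$), $\A_0=\A$, and $\A_1=\A+U^{-1}[\A,U]=U^{-1}\A U$; in particular $e^{-\A_1^2}=U^{-1}e^{-\A^2}U$, which together with the graded-trace property of $\tau_\sigma$ will dispose of the endpoint at $t=1$. Secondly, $[\A,U^{-1}]=-U^{-1}[\A,U]U^{-1}$, hence $\Gamma=U^{-1}[\A,U]=-[\A,U^{-1}]U$, so $\Gamma=\frac{1}{2}\big(U^{-1}[\A,U]-[\A,U^{-1}]U\big)$; this splitting is exactly what produces the antisymmetric combination $\Phi_k-\Psi_k$ and the overall factor $\frac{1}{2}$. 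Thirdly, $\A_t^2=\A^2+t[\A,\Gamma]+t^2\Gamma^2$, and $\Gamma$, $[\A,\Gamma]$, $\Gamma^2$ are all fiberwise pseudodifferential operators of order zero, so the expansions below make sense and converge by the entire estimates of Theorem \ref{JLO}.

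\emph{Step 2: expansion, $t$-integration, reorganization.} Using Duhamel (Lemma \ref{Duhamel}, Lemma \ref{DuhamelDer}) to expand $e^{-\A_t^2}$ around $e^{-\A^2}$ in powers of $t[\A,\Gamma]+t^2\Gamma^2$, the eta-integrand $\tau_\sigma(\dot{\A}_t e^{-\A_t^2})$ becomes a convergent signed series of JLO monomials $t^{N}\langle\langle\Gamma,Z_1,\ldots,Z_m\rangle\rangle_{\A}$ with each $Z_j\in\{[\A,\Gamma],\ \Gamma^2\}$ and $N$ the total weight. After performing $\int_0^1 t^N\,dt=1/(N+1)$, I would (i) expand $\Gamma$, $[\A,\Gamma]$, $\Gamma^2$ in terms of $U^{\pm1}$, $[\A,U^{\pm1}]$ and $[\A^2,U^{\pm1}]$, using the graded Leibniz rule for $\A$, the identity $[\A,[\A,U]]=[\A^2,U]$, and its second-order Leibniz consequence for $\A^2$; (ii) use the product relation (fourth item of Lemma \ref{relations}) to split the merged entries such as $U^{-1}[\A,U]$ and to trade every $[\A^2,\cdot]$-insertion for a telescoping difference of JLO monomials with consecutive entries multiplied; (iii) use cyclicity (first item) together with $\A_1=U^{-1}\A U$ to rotate $U^{-1}$ (resp. $U$) to the front of each monomial and to cancel the conjugated contributions; and (iv) use the Leibniz/Bianchi relation (third item) repeatedly to split off $d_B$-exact forms on $B$, which are the ``exact forms on the base'' of the statement. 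After this bookkeeping, the two terms in $\Gamma=\frac{1}{2}(U^{-1}[\A,U]-[\A,U^{-1}]U)$ contribute respectively $\frac{1}{2}\sum_k(-1)^k k!\,\Phi_k$ and $-\frac{1}{2}\sum_k(-1)^k k!\,\Psi_k$, the weights $(-1)^k k!$ emerging from combining $1/(N+1)$ with the simplex integrals $\int_{\Delta(n)}$ and resumming. The interchanges of $\sum$, $\int_0^1$ and $\int_{\Delta(n)}$, and convergence in the $C^\ell$ topology, are justified by the estimates already established for Theorem \ref{JLO}; the ambient matrix trace on $M_N$ is carried along unchanged since $U$ is an even, form-degree-zero multiplier.

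\emph{Main obstacle.} The genuine difficulty is not any single identity but steps (ii)--(iv): organizing the infinite (convergent) family of JLO monomials produced by Duhamel, matching their coefficients, after the $t$- and simplex-integrations, to the explicit $(-1)^k k!$ weights of the odd Chern character $\ch(U)$, and checking that every leftover term is $d_B$-exact. An essentially equivalent but equally laborious route is to introduce the transgression cochains $\Ch(\A_t,\cdot)$ and $\alpha^*(\A_t,\cdot)$ and to differentiate the $\A_t$-JLO pairing with $U$ in $t$ via Lemma \ref{relations}(5) and Proposition \ref{Identity}; this merely reshuffles the same computation without removing the sign-and-multiplicity bookkeeping, which remains the crux.
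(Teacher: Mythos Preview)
Your approach is genuinely different from the paper's, and the gap lies exactly where you flag the ``main obstacle.''

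The paper does \emph{not} Duhamel-expand $e^{-\A_t^2}$ around $e^{-\A^2}$ and then reorganize the resulting JLO monomials. Instead it runs Getzler's rectangle trick from \cite{GetzlerOdd}: one doubles the Hilbert space, sets $V=\left(\begin{smallmatrix}0&iU^{-1}\\-iU&0\end{smallmatrix}\right)$ (so $V^2=I$) and $\tilde\B=\mathrm{diag}(\B_\sigma,-\B_\sigma)$, and introduces a \emph{second} auxiliary parameter $x\in[0,\infty)$ via the superconnection $\tilde\B_{t,x}=\tilde\B-tV[\tilde\B,V]+xV$ over $B\times[0,1]\times[0,\infty)$. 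Closedness of $\tau_s(e^{-\A^2})$ forces the integral of $\tau_s(e^{-\A^2})$ around the boundary of the rectangle $[0,1]\times[0,x_0]$ to be $d_B$-exact. The four sides are then identified separately: the $x=0$ side is $2\int_0^1\tau_\sigma(\dot\B_t e^{-\B_t^2})\,dt$; the $x=x_0$ side tends to $0$ as $x_0\to\infty$ by the entire estimates; and the two vertical sides yield the JLO pairing because the Duhamel expansion of $e^{-Y_{0,x}}$ in powers of $x[\tilde\B,V]$, followed by the Gaussian integral
\[
\int_0^\infty x^{2\ell+1}e^{-x^2}\,dx=\tfrac{1}{2}\,\ell!,
\]
produces exactly the weights $(-1)^k k!$ and the overall factor $\tfrac12$. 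The antisymmetric combination $\Phi_k-\Psi_k$ falls out of the $2\times2$ supertrace on the doubled space, not from your splitting $\Gamma=\tfrac12(U^{-1}[\A,U]-[\A,U^{-1}]U)$.

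Your claimed mechanism for the weights---``$(-1)^k k!$ emerging from combining $1/(N+1)$ with the simplex integrals and resumming''---is the step that is not substantiated, and I do not see how to make it work as stated. The simplex integrals are already absorbed in the bracket $\langle\langle\cdots\rangle\rangle$, so after your $t$-integration you are left with rational weights $\frac{1}{N+1}$ where $N$ counts each $[\A,\Gamma]$ once and each $\Gamma^2$ twice; turning that, after the splittings (i)--(iv), into the factorial weights of $\ch(U)$ is a nontrivial combinatorial identity that you have not proved and that the paper entirely sidesteps. The auxiliary variable $x$ is not a convenience but the device that converts the problem into a Gaussian moment computation, which is what makes the $k!$'s appear without bookkeeping. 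If you want to salvage a direct argument, the transgression route you mention at the end (differentiating the $\A_t$-JLO pairing via Lemma \ref{relations}(5) and Proposition \ref{Identity}) is closer in spirit, but you would still need an independent evaluation of the endpoint, which in the paper is again supplied by the rectangle.
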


Note that the additional exact forms can be given explicitly.

\subsection{Proof of the third theorem}

As explained before, the proof of this theorem follows the lines of \cite{GetzlerOdd} and we split the argument into a number of steps. First we double up our Hilbert space and  replace $U$ by 
$$
V :=\left(\begin{array}{cc} 0 & i U^{-1} \\ -i U & 0 \end{array} \right), \text{ so that } V^2 = I,
 \text{ and let }
 \tilde\B = \left( \begin{array}{cc} \B_\sigma & 0 \\ 0 & - \B_\sigma  \end{array}\right).$$
 Consider {{the operator $\A$ associated with the fibration 
$$
M\times [0,1] \times [0, +\infty) \rightarrow B\times [0,1]\times [0, +\infty ),
$$
and given by }}
$$
\A := {\tilde\B}_{t,x} + \left(\begin{array}{cc}d & 0 \\ 0 & d\end{array}\right)\text{ where } {\tilde\B}_{t,x} := \tilde\B_t + x V\text{ and } \tilde\B_t = \tilde\B - t V [{\tilde\B} , V].
$$
We use graded commutators so that for instance $[{\tilde\B} , V] = \tilde\B V + V \tilde\B$. The differential  $d$ is  the de Rham differential on $[0,1]\times [0, +\infty)$. {{ Moreover, we extend $\tau_\sigma$ to a supertrace $\tau_{s}$ given by
$$
\tau_{s} (A):= \tau _\sigma(A_{11}) +  \tau_\sigma (A_{22}).
$$}}
It is then straightforward to check{{, using the Bianchi identity satisfied by $\B_\sigma$, that  $d_B\tau_s(e^{-\A^2}) = 0$.}}  Computing the square of $\A$ one finds using for instance the relation $ V[\tilde\B, V] V = [\tilde\B, V]$:
$$
\A^2 = Y_{t,x} + dx V - dt V [{\tilde\B} , V]\text{ where }
Y_{t,x} =  ( \tilde\B_t )^2 + x (1-2t) [\tilde\B , V] + x^2.
$$
Recall then that the differential form $ 
\tau_{s}  (e^{-\A^2})$ is automatically closed as a differential form on the manifold with boundary $B\times [0,1]\times [0, +\infty)$.
\begin{lemma}
Let  $R(x_0)$ denote the rectangle $[0,1]\times [0, x_0]$, then in  $\Omega^*(B)$ we have:
$$
\int_{\pa R(x_0)} \tau_{s}  (e^{-\A^2}) \in d_B\Omega^*(B).
$$
\end{lemma}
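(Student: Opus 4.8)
The plan is to apply the ordinary Stokes theorem on the compact rectangle $R(x_0)=[0,1]\times[0,x_0]$, exploiting that $\tau_{s}(e^{-\A^2})$ is a closed differential form on $B\times[0,1]\times[0,+\infty)$. First I would decompose $\tau_{s}(e^{-\A^2})$ by its exterior degree in the two extra variables $(t,x)$:
$$
\tau_{s}(e^{-\A^2}) \;=\; \eta_0 \;+\; \eta_1 \;+\; \gamma\wedge dt\wedge dx, \qquad \eta_1 := \alpha\wedge dt + \beta\wedge dx,
$$
where $\eta_0,\alpha,\beta,\gamma\in\Omega^*(B)$ depend smoothly on $(t,x)$ throughout $R(x_0)$, smoothness up to the corners following from the finite Duhamel expansion of $e^{-\A^2}$ around the heat semigroup together with the smooth dependence of $\tilde\B_{t,x}$ on $(t,x)$. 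By definition, $\int_{\pa R(x_0)}\tau_{s}(e^{-\A^2})$ is the $\Omega^*(B)$-valued integral of the one-form part $\eta_1$ along the curve $\pa R(x_0)$, the four edges $\{x=0\}$, $\{t=1\}$, $\{x=x_0\}$, $\{t=0\}$ carrying their induced boundary orientations.

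Next I would use that $\tau_{s}(e^{-\A^2})$ is closed for the total de Rham differential $d_B+d$ of the product, $d=dt\,\pa_t+dx\,\pa_x$ being the differential in the two extra variables; this is the identity recalled just before the statement, a consequence of the Bianchi identity $[\A,\A^2]=0$ and of $\tau_{s}$ being a closed graded supertrace. Separating $(d_B+d)\tau_{s}(e^{-\A^2})=0$ into homogeneous components in the $(t,x)$-degree, the top component reads $d\,\eta_1 = -\,d_B(\gamma\wedge dt\wedge dx)$. Stokes on $R(x_0)$ applied to the $(t,x)$-one-form $\eta_1$ (treating the $\Omega^*(B)$-coefficients as inert parameters), followed by moving $d_B$ outside the integral over the compact rectangle, then gives
$$
\int_{\pa R(x_0)}\tau_{s}(e^{-\A^2}) \;=\; \int_{\pa R(x_0)}\eta_1 \;=\; \int_{R(x_0)} d\,\eta_1 \;=\; d_B\!\left(-\int_{R(x_0)}\gamma\, dt\, dx\right),
$$
which exhibits $\int_{\pa R(x_0)}\tau_{s}(e^{-\A^2})$ as $d_B$ of an explicit smooth form on $B$ (one writable down term by term from the Duhamel expansion), hence in $d_B\Omega^*(B)$.

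What remains is essentially bookkeeping rather than a genuine obstacle: specifying the induced orientations of the four edges of $\pa R(x_0)$, and tracking the Koszul/boundary signs that appear in the bidegree decomposition of $(d_B+d)\tau_{s}(e^{-\A^2})=0$, in Stokes' formula, and in commuting $d_B$ past $\int_{R(x_0)}$; none of these affects the conclusion, since the right-hand side is $d_B$ of a well-defined $\Omega^*(B)$-form regardless of the overall sign. I should stress that there is no analytic difficulty at this stage: since $x_0<+\infty$, the rectangle $R(x_0)$ is compact, so $\tau_{s}(e^{-\A^2})$ together with all its $B$- and $(t,x)$-derivatives is bounded on it, and Stokes' theorem and differentiation under the integral sign apply with no further justification. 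The subtle large-$x$ asymptotics — where finite propagation speed and the Getzler-type rescaling estimates are needed — enter only later, when one passes to the limit $x_0\to+\infty$.
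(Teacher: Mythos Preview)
Your proof is correct and follows essentially the same route as the paper's: the paper also decomposes $\tau_s(e^{-\A^2})$ by $(t,x)$-degree, applies Stokes on the rectangle (spelled out there via the fundamental theorem of calculus on each edge), and then invokes closedness of $\tau_s(e^{-\A^2})$ on $B\times R(x_0)$ to identify the resulting integrand as $-d_B$ of the $dt\wedge dx$-coefficient. Your treatment of the sign bookkeeping is apt; the paper likewise does not chase the signs, since only membership in $d_B\Omega^*(B)$ is claimed.
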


\begin{proof}
We have by a direct computation 
$$
\int_{\pa R(x_0)} \tau_{s}  (e^{-\A^2}) = \int_{\pa R(x_0)} (dx \tau_{s} (V e^{- Y_{t,x}}) - dt \tau_{s} (  V [\tilde\B, V] e^{- Y_{t,x}})).
$$ 
Hence the differential form $\int_{\pa R(x_0)}\tau_{s}  (e^{-\A^2})$ on the base may be written as
$$
\int_{\pa R(x_0)}\tau_{s}  (e^{-\A^2}) = \int_{\pa R(x_0)}dx \omega_{t,x} - dt \alpha_{t,x},
$$
where $\omega_{t,x}$ and $\alpha_{t,x}$ are smooth families of differential forms on $B$. 
We thus have
\begin{eqnarray*}
 \int_{\partial R(x_0)} \tau_{s}  (e^{-\A^2}) & = & \int_0^1 [\alpha_{t,x_0} - \alpha_{t, 0}] dt - \int_0^{x_0} [\omega_{1,x} - \omega_{0,x}] dx\\ & = & \int_0^1 \int_0^{x_0} [\frac{\pa \alpha}{\pa x} - \frac{\pa \omega}{\pa t}] dt dx.
\end{eqnarray*}
The closedness of $\tau_{s}  (e^{- \A^2})$ implies in particular that the component
that contains $dt\wedge dx$, say $\beta_{t,x}dt\wedge dx$ 
satisfies  $$d_B \beta+\frac{\pa \alpha}{\pa x} - \frac{\pa \omega}{\pa t}=0.$$ But this is precisely what we need to complete the proof.
\end{proof}

We denote for $x>0$ by $\gamma_x$ the path $[0,1]\times \{x\}$ oriented in the direction of increasing $t\in [0,1]$. We also consider the path $\Gamma^x_t= \{t\}\times [0,x]$ for $t\in [0,1]$ and $x>0$, oriented in the direction of increasing $y\in [0,x]$. 

\begin{lemma}
 We have the following equality of the corresponding even forms
$$
\int_{\gamma_0} \tau_s (e^{- \A^2}) =  2 \int_0^1 \tau_{\sigma}  (\dot{\B_t} e^{-\B_t^2}) dt.
$$
\end{lemma}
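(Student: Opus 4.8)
The plan is to restrict the closed form $\tau_s(e^{-\A^2})$ to the segment $\gamma_0 = [0,1]\times\{0\}$ and compute the resulting $1$-form on $B\times[0,1]$ explicitly. On $\gamma_0$ we have $x=0$, so that $Y_{t,0}=(\tilde\B_t)^2$, the pull-back of $dx$ vanishes, and $\A^2$ becomes $(\tilde\B_t)^2 - dt\,V[\tilde\B,V]$. Expanding $e^{-\A^2}$ by the finite Duhamel expansion used throughout, every term with at least two insertions of the perturbation $-dt\,V[\tilde\B,V]$ vanishes since $dt\wedge dt=0$, while the term with no insertion, namely $e^{-(\tilde\B_t)^2}$, carries no $dt$ and so does not contribute to $\int_{\gamma_0}$. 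Hence only the term linear in $dt$ survives; integrating it over $\Delta(1)$ and using the graded tracial property of $\tau_s$ together with $[(\tilde\B_t)^2,e^{-u(\tilde\B_t)^2}]=0$ to merge the two heat factors, one is reduced to
\[
\int_{\gamma_0}\tau_s(e^{-\A^2}) \;=\; \pm\int_0^1 \tau_s\!\left( V[\tilde\B,V]\, e^{-(\tilde\B_t)^2}\right) dt ,
\]
the overall sign being fixed by the orientation of $\gamma_0$ and the normalisation of the Duhamel series.

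The next step is the $2\times2$ block computation. From $V^2=I$, the relation $V[\tilde\B,V]V=[\tilde\B,V]$ and the explicit shapes of $V$ and $\tilde\B$, one finds that $[\tilde\B,V]$ is off-diagonal with entries $i[\B_\sigma,U^{-1}]$ and $i[\B_\sigma,U]$, whence $V[\tilde\B,V]=\mathrm{diag}\big(-U^{-1}[\B_\sigma,U],\,U[\B_\sigma,U^{-1}]\big)$ is diagonal, and therefore $\tilde\B_t=\tilde\B - t\,V[\tilde\B,V]=\mathrm{diag}(\B_t,\,-\B_t')$, where $\B_t=\B_\sigma+tU^{-1}[\B_\sigma,U]$ is exactly the affine path of Theorem \ref{JLO=SF2} and $\B_t':=\B_\sigma+tU[\B_\sigma,U^{-1}]$. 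Since $(\tilde\B_t)^2$ is block diagonal, $e^{-(\tilde\B_t)^2}=\mathrm{diag}(e^{-\B_t^2},e^{-(\B_t')^2})$, $\tau_s$ acts on diagonal operators as $\tau_\sigma$ in each block, and $\dot\B_t=U^{-1}[\B_\sigma,U]$, $\dot{\B_t'}=U[\B_\sigma,U^{-1}]$, so the integrand above equals $-\tau_\sigma(\dot\B_t\,e^{-\B_t^2})+\tau_\sigma(\dot{\B_t'}\,e^{-(\B_t')^2})$.

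Finally I would identify the second block with the first. Expanding $U[\B_\sigma,U^{-1}]=U\B_\sigma U^{-1}-\B_\sigma$ gives $\B_t'=(1-t)\B_\sigma+tU\B_\sigma U^{-1}=U\,\B_{1-t}\,U^{-1}$, so $(\B_t')^2=U\B_{1-t}^2U^{-1}$, $e^{-(\B_t')^2}=U e^{-\B_{1-t}^2}U^{-1}$ and $\dot{\B_t'}=-U\dot\B_{1-t}U^{-1}$. Using $\sigma U=U\sigma$, so that $U$ is even and $\tau_\sigma$ is invariant under conjugation by $U$, one obtains $\tau_\sigma(\dot{\B_t'}e^{-(\B_t')^2})=-\tau_\sigma(\dot\B_{1-t}e^{-\B_{1-t}^2})$, and the substitution $t\mapsto 1-t$ then shows $\int_0^1\tau_\sigma(\dot{\B_t'}e^{-(\B_t')^2})\,dt=-\int_0^1\tau_\sigma(\dot\B_t e^{-\B_t^2})\,dt$. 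Adding the two contributions, the block diagonal and the conjugated one reinforce each other and produce $2\int_0^1\tau_\sigma(\dot\B_t e^{-\B_t^2})\,dt$, as claimed. The genuinely delicate point is the sign and trace bookkeeping throughout — in particular the interplay of the doubling $\Z_2$-grading with the form and $\sigma$ grading in $\tau_s$, the anticommutation of $dt$ inside the Duhamel series, and the conjugation-plus-time-reversal identity $\B_t'=U\B_{1-t}U^{-1}$; once these signs are pinned down the rest is a direct calculation.
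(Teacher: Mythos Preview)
Your proof is correct and follows essentially the same route as the paper. The paper does not re-derive the $dt$-component via a Duhamel expansion but simply quotes the formula $\int_{\gamma_0}\tau_s(e^{-\A^2})=-\int_0^1\tau_s(V[\tilde\B,V]e^{-Y_{t,0}})\,dt$ obtained from the explicit computation of $\A^2$ just before the lemma; and in the final step the paper writes only ``$t\to 1-t$ in the second term'', which is exactly your conjugation identity $\B_t'=U\B_{1-t}U^{-1}$ combined with the graded trace invariance under conjugation by the even element $U$, made explicit.
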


\begin{proof}
We have 
$$
Y_{t,0} = \tilde\B _t^2 = \left(\begin{array}{cc} (\B_\sigma +t U^{-1} [\B_\sigma, U])^2 & 0 \\ 0 & (\B_\sigma + t U [\B_\sigma, U^{-1}])^2  \end{array}\right)
$$
Hence we obtain
\begin{multline*}
 \int_{\gamma_0} \tau_{s}  (e^{- \A^2}) = - \int_0^1 \tau_{s}  (V[{\tilde B}, V] e^{-Y_{t,0}}) dt = \int_0^1 \tau _\sigma ((U^{-1}[\B_\sigma, U]) e^{-(\B_\sigma + t U^{-1} [\B_\sigma, U])^2}) dt\\  - \int_0^1 \tau _\sigma (U[\B_\sigma, U^{-1}] e^{-(\B_\sigma +t U [\B_\sigma, U^{-1}])^2} )dt = 2 \int_0^1 \tau _\sigma (U^{-1}[\B_\sigma, U] e^{-(\B_\sigma +t U^{-1} [\B_\sigma, U])^2} )dt
\end{multline*}
where the last step is obained by $t\to 1-t$ in the second term.
{{Notice that only the even forms are relevant for us.}}
\end{proof}

\begin{lemma} We have 
 $$
\lim_{x\to +\infty} \left[ \int_{\Gamma_1^x} \tau_{s} (e^{-\A^2}) +  \int_{\Gamma_0^x} \tau_{s}  (e^{ -\A^2})\right]  = 0.
$$
Moreover up to forms that are exact on $B$, 
$$
\lim_{x\to +\infty} \left[ \int_{\Gamma_0^x} \tau_{s}  (e^{-\A^2})\right] = \frac{1}{2}(<\JLO (D), U> +<\JLO(D), U^{-1}>).
$$
\end{lemma}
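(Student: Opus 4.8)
The plan is to restrict the closed form $\tau_s(e^{-\A^2})$ to each vertical segment $\{t\}\times[0,x]$ and compute it explicitly. On such a segment $dt$ pulls back to zero, so only the $dx$-component of $e^{-\A^2}$ survives; since $\A^2 = Y_{t,x} + dx\,V - dt\,V[\tilde\B,V]$ with $Y_{t,x} = \tilde\B_t^2 + x(1-2t)[\tilde\B,V] + x^2$ and $dx$ is nilpotent, Duhamel's formula truncates after one term and the graded traciality of $\tau_s$ collapses the remaining Duhamel integral, yielding
$$
\int_{\Gamma_t^x}\tau_s(e^{-\A^2}) = -\int_0^x \tau_s\big(V\,e^{-Y_{t,y}}\big)\,dy .
$$
The scalar summand $x^2$ inside $Y_{t,x}$ factors out of $e^{-Y_{t,y}}$ as $e^{-y^2}$, so the integrand is Gaussian in $y$ and all the $y$-integrals below converge absolutely as $x\to+\infty$.

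For the first identity I would exploit the symmetry induced by $V$. From $V^2 = I$ one gets $\tilde\B_1 = \tilde\B - V[\tilde\B,V] = -V\tilde\B V$, hence $\tilde\B_1^2 = V\tilde\B^2 V$, and combined with $V[\tilde\B,V]V = [\tilde\B,V]$ this gives $Y_{1,y} = V\,Y_{0,-y}\,V$, so that $\tau_s(Ve^{-Y_{1,y}}) = \tau_s(V^2 e^{-Y_{0,-y}}V) = \tau_s(Ve^{-Y_{0,-y}})$ using traciality once more. On the other hand $\tau_s(Ve^{-Y_{0,y}})$ is odd in $y$: expanding $e^{-Y_{0,y}}$ in the perturbation $y[\tilde\B,V]$, the term with $k$ such factors carries $y^k$, and since $V$ and $[\tilde\B,V]$ are off-diagonal in the $2\times2$ decomposition while the heat factors are diagonal, the supertrace $\tau_s$ annihilates every term with an even number of $[\tilde\B,V]$'s. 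Hence only odd $k$ contribute, $\tau_s(Ve^{-Y_{0,-y}}) = -\tau_s(Ve^{-Y_{0,y}})$, and adding the two path integrals makes them cancel, in fact identically in $x$, a fortiori in the limit.

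For the second identity I would expand, as above,
$$
\int_{\Gamma_0^x}\tau_s(e^{-\A^2}) = -\int_0^x e^{-y^2}\sum_{m\ge0}(-y)^{2m+1}\,\langle\langle V,[\tilde\B,V],\dots,[\tilde\B,V]\rangle\rangle_{\tilde\B,\,2m+1}\,dy,
$$
with $2m+1$ copies of $[\tilde\B,V]$, and let $x\to+\infty$ using $\int_0^\infty e^{-y^2}y^{2m+1}\,dy = \tfrac12 m!$. The interchange of the limit with the sum over $m$ is legitimate because $[\tilde\B,V] = [\nabla,V] + \sigma[D,V]$ is a bounded operator with coefficients in forms and the $(2m+1)$-linear JLO functionals are controlled by the estimates of Section 4 proving Theorem \ref{JLO}, so that $\sum_m \tfrac12 m!\,\|\langle\langle V,[\tilde\B,V],\dots,[\tilde\B,V]\rangle\rangle_{\tilde\B,\,2m+1}\|<\infty$. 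One then gets
$$
\lim_{x\to+\infty}\int_{\Gamma_0^x}\tau_s(e^{-\A^2}) = \tfrac12\sum_{m\ge0} m!\,\langle\langle V,[\tilde\B,V],\dots,[\tilde\B,V]\rangle\rangle_{\tilde\B,\,2m+1},
$$
and it remains to expand each of these $2\times2$ functionals along the diagonal. Since $\tilde\B = \mathrm{diag}(\B_\sigma,-\B_\sigma)$, $V$ is off-diagonal with entries $\pm iU^{\mp1}$, and $[\tilde\B,V]$ is off-diagonal with entries $i[\B_\sigma,U^{-1}]$ and $i[\B_\sigma,U]$, the $(1,1)$ block of the relevant product is the alternating JLO string $U^{-1},[\B_\sigma,U],[\B_\sigma,U^{-1}],\dots,[\B_\sigma,U]$ and the $(2,2)$ block is $U,[\B_\sigma,U^{-1}],[\B_\sigma,U],\dots,[\B_\sigma,U^{-1}]$ — exactly the strings appearing in the pairing formulas for $\langle\JLO(D),U\rangle$ and $\langle\JLO(D),U^{-1}\rangle$. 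Collecting the powers of $i$ into the factors $(-1)^m$ of those formulas, and reconciling the two families of strings by means of the Leibniz identity $[\B_\sigma,U]U^{-1}+U[\B_\sigma,U^{-1}]=0$ together with the cyclicity and the $d_B$-transgression relation of Lemma \ref{relations} — which is where the de Rham exact correction terms enter — one arrives at $\tfrac12\big(\langle\JLO(D),U\rangle+\langle\JLO(D),U^{-1}\rangle\big)$ modulo $d_B\Omega^*(B)$.

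The Duhamel expansion and the Gaussian integral are routine; \textbf{the delicate part is the final identification} — keeping track of the signs (the powers of $i$ from $V$ and $[\tilde\B,V]$ and the $(-1)^m$ in the pairing) and establishing, modulo exact forms, the equality of the block-diagonal functionals with the two JLO pairings in a way that remains compatible with the chain of lemmas assembling Theorem \ref{JLO=SF2}. A secondary technical point is the justification of the interchange of the limit $x\to\infty$ with the infinite sum over $m$, for which the entire estimates of Section 4 are precisely what is needed.
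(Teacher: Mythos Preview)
Your approach is essentially the same as the paper's. Both arguments restrict to the $dx$-component on the vertical segments, use the conjugation identity $Y_{1,y}=VY_{0,-y}V$ (derived from $\tilde\B_1=-V\tilde\B V$ and $V[\tilde\B,V]V=[\tilde\B,V]$) together with the off-diagonal structure of $V$ and $[\tilde\B,V]$ to obtain the first vanishing, and then expand Duhamel and integrate the Gaussian for the second identity, reading off the two block-diagonal JLO strings. Your observation that the sum of the two path integrals actually vanishes for every finite $x$, not only in the limit, is a mild sharpening using the same ingredients (oddness of $\tau_s(Ve^{-Y_{0,y}})$ in $y$); the paper instead rewrites $\int_{\Gamma_1^x}$ as an integral over $[-x,0]$ and argues that $\int_{\mathbb R}=0$. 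The paper, like you, leaves the final identification of the block strings with $\tfrac12(\langle\JLO(D),U\rangle+\langle\JLO(D),U^{-1}\rangle)$ essentially implicit, invoking the cocycle property of $\JLO(D)$ in a one-line remark after the lemma---exactly where you flag the ``delicate part'' and appeal to Lemma~\ref{relations}.
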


\begin{proof}
Only the term $ - \tau_{s}  (V e^{-Y_{t,x}})$ contributes to the integrals $\int_{\Gamma_t^x}$. We thus need to compare $Y_{1,x}$ with $Y_{0,x}$. But notice that
$$
Y_{1,x} = \tilde\B_1^2 - x [\tilde\B , V] + x^2 = V \tilde\B^2 V - x[\tilde\B , V] + x^2.
$$
On the other hand, $V[\tilde\B, V] V = [\tilde\B , V]$ so that
$$
Y_{1,x} = V \left[ \tilde\B^2 - x [\tilde\B , V] + x^2 \right] V = V Y_{0, -x} V.
$$
Hence,
$$
 \tau_{s}  (V e^{-Y_{1,x}})  = \tau_{s}  (V^2 e^{-Y_{0,-x}} V) = \tau_{s} (e^{-Y_{0, -x}} V) = \tau_s  (V e^{-Y_{0, -x}}).
$$
 Therefore we obtain
$$
 \int_{\Gamma_1^x} \tau_{s}  (e^{-\A^2}) = \int_{-x}^0 \tau_{s}  (V e^{-Y_{0, y}}) dy.
$$
Now, since $Y_{0,x}=\tilde\B^2 + x[\tilde\B , V] + x^2$ and using Duhamel we know that
$$
\int_\R \tau_{s}  (V e^{Y_{0,x}}) dx = \sum_{k\geq 0} \left<\left<V, [\tilde\B, V], \cdots , [\tilde\B, V] \right>\right>_{\tilde\B} \int_\R  x^k e^{-x^2} dx,
$$
a series which converges in the Fr\'echet topology of $\Omega^*(B)$
because $\int_\R  x^k e^{-x^2}=\Gamma(\frac{k+1}{2})$ while the JLO bracket introduces a factor of $1/k!$ (see the proof of the next lemma for details). Next,  computing, 
$
\langle\langle V, [\tilde\B, V], \cdots , [\tilde\B, V] \rangle\rangle_{\tilde\B}
$ in terms of the multilinear functional corresponding to $\B_\sigma$, shows that it is trivial when the number of commutators $[\tilde\B, V]$ is even. Moreover, when $k=2\ell+1$ is odd clearly the integral $\int_\R  x^k e^{-x^2} dx$ vanishes, and so
$$
\int_\R \tau_{s}  (V e^{-Y_{0,x}}) dx = 0 \text{ or equivalently } \lim_{x\to +\infty} \int_{\Gamma_1^x} \tau_{s} (e^{-\A^2}) =  \lim_{x\to +\infty} \int_{\Gamma_0^x} \tau_{s} (e^{-\A^2}).
$$
If we integrate over $(0, +\infty)$ rather than $\R$ in the previous computation, then we obtain
\begin{multline*}
 \int_0^{+\infty} \tau_{s}  (V e^{-Y_{0,x}}) dx =  - \sum_{\ell\geq 0} \langle \langle V, [\tilde\B, V], \cdots , [\tilde\B, V] \rangle\rangle_{2\ell+1,\tilde\B} \int_0^\infty  x^{2\ell+1} e^{-x^2} dx \\ = -1/2 \sum_{\ell\geq 0} \ell! \langle\langle V, [\tilde\B, V], \cdots , [\tilde\B, V] \rangle\rangle_{2\ell+1,\tilde\B}.
\end{multline*}
Hence
\begin{multline*}
 V e^{-u_0\tilde\B^2} [\tilde\B , V] e^{-u_1\tilde\B^2} \cdots [\tilde\B , V] e^{-u_{2\ell+1}\tilde\B^2} = \\ (-1)^{\ell+1} \left(\begin{array}{cc}   U^{-1} e^{-u_0\B_\sigma^2} [\B_\sigma, U] e^{-u_1\B_\sigma^2}  \cdots [\B_\sigma, U] e^{-u_{2\ell+1}\B_\sigma^2} & 0 \\ 0 & - U e^{-u_0\B_\sigma^2} [\B_\sigma, U^{-1}] e^{-u_1\B_\sigma^2}  \cdots [\B_\sigma, U^{-1}] e^{-u_{2\ell+1}\B_\sigma^2}  \end{array}\right)
\end{multline*}
So that, using the fact that the differential forms involved are even, 
\begin{multline*}
(-1)^{\ell+1} \langle\langle V, [\tilde \B, V], \cdots , [\tilde\B, V] \rangle\rangle_{2\ell+1} = \langle\langle U^{-1}, [\B_\sigma, U], [\B_\sigma, U^{-1}], \cdots , [\B_\sigma, U^{-1}], [\B_\sigma, U] \rangle\rangle_{2\ell+1} \\ - \langle\langle U, [\B_\sigma, U^{-1}], [\B_\sigma, U], \cdots , [\B_\sigma, U], [\B_\sigma, U^{-1}] \rangle\rangle_{2\ell+1}.
\end{multline*}
\end{proof}

Now we remark that the last line of the proof can be simplified, up to the addition of exact forms on the base,
using the cocycle property of $\JLO(D)$
$$
\lim_{x\to +\infty} \int_{\Gamma_0^x} \tau_s (e^{-\A^2}) = \sum_{k\geq 0} (-1)^{k+1} k! \langle\langle U^{-1}, [\B_\sigma, U] , \cdots , [\B_\sigma, U^{-1}], [\B_\sigma, U] \rangle\rangle_{2k+1, \B_\sigma}.
$$
To end the proof of Theorem \ref{JLO=SF2}, we are reduced to the following

\begin{lemma}
In the Fr\'echet topology of $\Omega^*(B)$, we have:
$
\lim_{x_0\to +\infty} \int_{\gamma_{x_0}} \tau_s  (e^{-\A^2}) = 0.
$
\end{lemma}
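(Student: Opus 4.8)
The plan is to turn the statement into a single Gaussian-in-$x_0$ estimate. Since $\gamma_{x_0}=[0,1]\times\{x_0\}$ runs in the $t$-direction at fixed $x$, only the $dt$-component of $\tau_s(e^{-\A^2})$ restricts nontrivially to it, so, exactly as in the computation carried out in the proof of the previous lemma,
\[
\int_{\gamma_{x_0}}\tau_s(e^{-\A^2})=-\int_0^1\tau_s\bigl(V[\tilde\B,V]\,e^{-Y_{t,x_0}}\bigr)\,dt .
\]
Hence it suffices to bound $\tau_s\bigl(V[\tilde\B,V]e^{-Y_{t,x}}\bigr)$, and each of its base-direction derivatives $d_{Y_1}\cdots d_{Y_r}$, by a quantity that tends to $0$ as $x\to+\infty$ uniformly for $t\in[0,1]$; integrating over the compact interval $[0,1]$ then gives convergence to $0$ in every $C^\ell$-seminorm of $\Omega^*(B)$, that is, in the Fr\'echet topology.

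The decisive structural facts are three. First, in $Y_{t,x}=(\tilde\B_t)^2+x(1-2t)[\tilde\B,V]+x^2$ the summand $x^2$ is a scalar times the identity, hence central, so $e^{-uY_{t,x}}=e^{-ux^2}e^{-uZ_{t,x}}$ with $Z_{t,x}:=(\tilde\B_t)^2+x(1-2t)[\tilde\B,V]$; after Duhamel expansion and integration over the simplex the scalars $\prod_j e^{-u_jx^2}$ combine to the single factor $e^{-x^2}$. Second, $\tilde\B_t=(1-t)\tilde\B-tV\tilde\B V$ has, for every $t$, the same fiberwise principal symbol as $\sigma\tilde D$: indeed $V$ acts by multiplication on the auxiliary $\mathbb C^{N}$-factor and therefore commutes with fiberwise Clifford multiplication, so the symbol of $V\tilde D V$ equals that of $-\tilde D$ and the symbols add up to that of $\tilde D$; thus $(\tilde\B_t)^2$ is a generalized fiberwise Dirac Laplacian, nonnegative and uniformly elliptic in $t$, perturbed only by an operator of positive form degree and fiberwise order $\le 1$, bounded uniformly in $t$. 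Third, $[\tilde\B,V]$ is a \emph{bounded} fiberwise operator of order $0$ (its $D$-part is $\sigma[\tilde D,V]$, built from $[D,U]=c(d_vU)$, and its $\nabla$-part from $[\nabla,U]=d_HU\wedge\cdot$), and by Lemma~\ref{normS} so are all its $\pa_{\tilde Y}$-iterates; hence the perturbation $x(1-2t)[\tilde\B,V]$, as well as each of its base derivatives, is bounded of norm $O(|x|)$.

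Granting this, I would expand $e^{-Z_{t,x}}$ by the Duhamel principle (Lemma~\ref{Duhamel}) just as in the proof of Theorem~\ref{JLO}, with $\B_\sigma$ replaced by the uniformly elliptic family $\tilde\B_t$ and with the extra bounded perturbation $x(1-2t)[\tilde\B,V]$ inserted. The positive-form-degree pieces truncate the expansion after at most $\dim B$ steps because they are nilpotent, while the series in the bounded order-$0$ perturbation $x(1-2t)\sigma[\tilde D,V]$ converges because the simplex volumes supply the usual $1/m!$, summing to a factor $O(e^{c|x|})$. Running verbatim the estimates of the proof of Theorem~\ref{JLO} — the spectral bounds on $(I+D^2)^{s}e^{-\alpha D^2}$, fiberwise H\"older, the extraction of a trace-class factor $e^{-(1-\epsilon)\tilde D^2}$, and Lemmas~\ref{alphaq}, \ref{EstimateNabla2}, \ref{normS}, \ref{p=N}, \ref{p<N} for the terms carrying base derivatives — every resulting term, after integration over the relevant simplices, is bounded by $C_\ell\,(1+|x|)^{N_\ell}\,e^{c_\ell|x|}$, with constants depending only on $\ell$ and the fixed geometric data.

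Multiplying by the factor $e^{-x^2}$ from the central term and integrating over $t\in[0,1]$ then yields, for $0\le r\le\ell$,
\[
p_r\!\left(\int_{\gamma_{x_0}}\tau_s(e^{-\A^2})\right)\le C_\ell\,(1+x_0)^{N_\ell}\,e^{c_\ell x_0-x_0^2},
\]
which tends to $0$ as $x_0\to+\infty$; since $\ell$ is arbitrary, this is the stated convergence in the Fr\'echet topology of $\Omega^*(B)$. The main obstacle is precisely this last bookkeeping: one must check that neither the Duhamel expansion nor any of the base-direction differentiations ever produces a factor growing faster than a fixed power of $|x|$ times $e^{c|x|}$, so that the Gaussian $e^{-x^2}$ dominates. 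This is exactly where one uses that $[\tilde\B,V]$ and all its $\pa_{\tilde Y}$-iterates are bounded of fiberwise order $0$, together with the finiteness (nilpotency) of the positive-form-degree part of $Z_{t,x}$.
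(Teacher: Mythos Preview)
Your proof is correct and follows essentially the same route as the paper: extract the central scalar $e^{-x_0^2}$ from $e^{-Y_{t,x_0}}$, Duhamel-expand in the bounded order-zero perturbation $x_0(1-2t)[\tilde\B,V]$, and invoke the JLO-type estimates of Theorem~\ref{JLO} (now with $\tilde\B_t$ in place of $\B_\sigma$) to bound the $k$-th term of the resulting series by $C_q^k/k!$. The only cosmetic difference is that the paper also uses $\int_0^1|1-2t|^k\,dt=2/(k+1)$ to sharpen the final bound to $e^{-x_0^2}\sum_{k\ge0}(C_qx_0)^k/(k+1)!$, dispensing with your polynomial prefactor.
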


\begin{proof}
Recall that
$$
\int_{\gamma_{x_0}} \tau_s  (e^{-\A^2}) = -\int_0^1 \tau_s  (V[\tilde\B, V] e^{-Y_{t,x_0}}) dt.
$$
Moreover, an application of our main theorem \ref{JLO} shows  that the following Duhamel expansion is convergent  in the Fr\'echet topology of $\Omega^*(B)$, with sum precisely $\tau_s  (V[\tilde\B, V] e^{-Y_{t,x_0}})$
$$
e^{-x_0^2} \sum_{k\geq 0} x_0^k (1-2t)^k \int_{\Delta(k)} \tau_s (V[\tilde\B , V] e^{-u_0\tilde\B_t^2} [\tilde\B , V] e^{-u_1\tilde\B_t^2} \cdots [\tilde\B , V] e^{-u_k\tilde\B_t^2} ) du_1 \cdots du_k.
$$
Reproducing the estimates of the semi-norms $(p_q)_{q\geq 0}$ of the expression
$$
\int_{\Delta(k)} \tau_s (V[\tilde\B , V] e^{-u_0\tilde\B_t^2} [\tilde\B , V] e^{-u_1\tilde\B_t^2} \cdots [\tilde\B , V] e^{-u_k\tilde\B_t^2} ) du_1 \cdots du_k
$$
we see that we can find constants $C_q$ depending on the unitary $U$ such that 
$$
p_q ( \int_{\Delta(k)} \tau_s  (V[\tilde\B , V] e^{-u_0\tilde\B_t^2} [\tilde\B , V] e^{-u_1\tilde\B_t^2} \cdots [\tilde\B , V] e^{-u_k\tilde\B_t^2} ) du_1 \cdots du_k) \leq C^k_q / k!.
$$ 
Finally notice that
$
\int_0^1 |1-2t|^k dt = 2/(k+1).
$
As a result we deduce
$$
p_q \left(\int_{\gamma_{x_0}} \tau_s  (e^{-\A^2})\right) \leq e^{-x_0^2} \sum_{k\geq 0} (C_q x_0)^k/(k+1)!,
$$
which converges to zero as $x_0\to +\infty$. 
\end{proof}

\end{document}